\newtheorem{prop}{Proposition}[section]
\newtheorem{theo}[prop]{Theorem}
\newtheorem{lem}[prop]{Lemma}
\newtheorem{coro}[prop]{Corollary}
\theoremstyle{definition}
\newtheorem{defi}[prop]{Definition}
\theoremstyle{remark}
\newtheorem{rmk}[prop]{Remark}
\newtheorem{exe}[prop]{Example}
\newtheoremstyle{monThm}
{} 
{} 
{\itshape} 
{} 
{\bf\scshape} 
{.} 
{\newline} 
{} 
\theoremstyle{monThm}
\newtheorem{MonThm}{Theorem}
\newcommand{\C}{\mathbb{C}}
\newcommand{\EE}{\mathcal{E}}
\newcommand{\FF}{\mathcal{F}}
\renewcommand{\H}{\mathbb{H}}
\newcommand{\Hh}{\mathrm{H}}
\newcommand{\HH}{\mathcal{H}}
\newcommand{\h}{\textbf{h}}
\newcommand{\K}{\mathrm{K}}
\newcommand{\KK}{\mathcal{K}}
\newcommand{\LL}{\mathcal{L}}
\newcommand{\II}{\mathcal{I}}
\newcommand{\MM}{\mathcal{M}}
\newcommand{\N}{\mathbb{N}}
\newcommand{\NN}{\mathcal{N}}
\newcommand{\OO}{\mathcal{O}}
\renewcommand{\P}{\mathbb{P}}
\newcommand{\q}{\textbf{q}}
\newcommand{\R}{\mathbb{R}}
\newcommand{\RR}{\mathcal{R}}
\newcommand{\U}{\mathrm{U}}
\newcommand{\UU}{\mathcal{U}}
\newcommand{\VV}{\mathcal{V}}
\newcommand{\WW}{\mathcal{W}}
\newcommand{\Y}{\mathcal{Y}}
\newcommand{\Z}{\mathbb{Z}}
\renewcommand{\epsilon}{\varepsilon}
\renewcommand{\phi}{\varphi}
\newcommand{\equaldef}{\overset{\textrm{def}}{=}}
\newcommand{\norm}[1]{\Vert #1 \Vert}
\newcommand{\SO}{\mathrm{SO}}
\newcommand{\SL}{\mathrm{SL}}
\newcommand{\GL}{\mathrm{GL}}
\newcommand{\PSL}{\mathrm{PSL}}
\newcommand{\SU}{\mathrm{SU}}
\newcommand{\PU}{\mathrm{PU}}
\newcommand{\Sp}{\mathrm{Sp}}
\newcommand{\Spin}{\mathrm{Spin}}
\newcommand{\ProjC}[1]{\mathbb{C}\mathbf{P}^{#1}}
\newcommand{\Gr}{\text{Gr}}
\newcommand{\sslash}{\mathbin{/\mkern-6mu/}}
\newcommand{\End}{\mathrm{End}}
\newcommand{\Isom}{\mathrm{Isom}}
\newcommand{\Hom}{\mathrm{Hom}}
\newcommand{\Rep}{\mathfrak{X}}
\newcommand{\Id}{\mathrm{Id}}
\newcommand{\I}{\mathrm{I}}
\newcommand{\tr}{\mathrm{tr}}
\newcommand{\function}[5]{
#1 :\left\{\begin{array}{rcl}
#2 & \longrightarrow & #3 \\
#4 & \longmapsto & #5
\end{array}\right.
}
\newcommand{\dev}{\mathrm{dev}}
\newcommand{\rank}{\text{rk}}
\renewcommand{\hat}[1]{\widehat{#1}}
\renewcommand{\sslash}{\slash \! \slash}
\title{Compact connected components in relative character varieties of punctured spheres}
\author{Nicolas Tholozan}
\address{Département de Mathématiques et Applications, UMR 8553,
\'Ecole Normale Sup\'erieure -- PSL Université, 
75005 Paris, 
France}
\email{nicolas.tholozan@ens.fr}
\author{Jérémy Toulisse}
\address{Laboratoire de Math\'ematiques J.A. Dieudonn\'e, 
UMR 7351 CNRS UNS, Universit\'e Côte d'Azur, 
06108 Nice Cedex 02, 
France} 
\email{jtoulisse@unice.fr}
\begin{document}


\removeabove{0.6cm}
\removebetween{0.6cm}
\removebelow{0.6cm}

\maketitle

\begin{prelims}

\DisplayAbstractInEnglish

\bigskip

\DisplayKeyWords


\DisplayMSCclass

\bigskip

\languagesection{Fran\c{c}ais}

\bigskip

\DisplayTitleInFrench

\medskip

\DisplayAbstractInFrench

\end{prelims}



\setcounter{tocdepth}{1} 

\tableofcontents


\section{Introduction}

Let $\Sigma_{g,s}$ be an oriented surface of genus $g$ with $s$ punctures, let $\Gamma_{g,s}$ denote its fundamental group and let $G$ be a non compact semi-simple Lie group. The character variety $\Rep(\Sigma_{g,s}, G)$ is roughly the space of morphisms from $\Gamma_{g,s}$ to $G$ up to conjugation by $G$. Character varieties of closed surfaces (\emph{i.e.} without puncture) carry a natural symplectic structure (constructed by Atiyah-Bott \cite{AtiyahBott} and Goldman \cite{GoldmanSymplectic}), which is preserved by the action of the mapping class group of the surface. The study of these character varieties, from a topological, algebraic, geometric or dynamical point of view, has attracted a lot of interest in the last decades.

The fundamental group of a surface with punctures is a free group, so its character varieties carry an action of the outer automorphism group of this free group. These character varieties, however, do not reflect all the geometric properties of the surface. Indeed, two punctured surfaces with the same Euler characteristic (such as $\Sigma_{0,3}$ and $\Sigma_{1,1}$) have the same fundamental group, hence the same character varieties.

One can nevertheless endow these character varieties with some extra structure that truly depends on the topology of $\Sigma_{g,s}$. Indeed, Goldman's construction has been extended to define a Poisson structure on $\Rep(\Sigma_{g,s}, G)$ (\cite{GuruprasadETAl,Lawton}), which is preserved by the action of the mapping class group of $\Sigma_{g,s}$. On the smooth locus, this Poisson structure defines a foliation by symplectic leaves. These leaves are the so called \emph{relative character varieties}, and consist of representations of $\Gamma_{g,s}$ into $G$ for which the conjugacy classes of the images of the loops going around the punctures are fixed. Given an element $h\in G^s$, we will denote by $\Rep_h(\Sigma_{g,s},G)$ the relative character variety whose representations map the loop around the $i$\textsuperscript{th} puncture to the conjugacy class of $h_i$ (here $i\in \{1,...,s\}$). In many aspects, these relative character varieties appear as the analogs for punctured surfaces of character varieties.

\vskip\baselineskip
Deroin and the first author studied in \cite{DeroinTholozan} the relative character varieties of the punctured sphere $\Sigma_{0,s}$ into $\PSL(2,\R) \cong \textrm{PSU}(1,1)$. For all $s\geq 3$, they exhibit representations with surprising properties that they call \emph{supra-maximal representations}, and prove that these representations form compact connected components of certain relative character varieties. Prior to their work, such compact components were only known to exist for $s=4$ \cite{BenedettoGoldman}.

The existence of such compact components may come out as a surprise, given the great flexibility of surface group representations. Indeed, Deroin--Tholozan's supra-maximal representations have several counter-intuitive properties:
\begin{itemize}
\item Though these representations are typically Zariski dense, the image of any \emph{simple} closed curve has a fixed point in the hyperbolic plane $\H^2$.

\item More generally, the eigenvalues of the image of a closed curve $c$ are bounded by a constant that depends only on the number of self-intersections of $c$.

\item For any such representation $\rho$ and any identification of $\Sigma_{0,s}$ with a punctured Riemann sphere, there exists a $\rho$-equivariant holomorphic map from $\widetilde{\Sigma}_{0,s}$ to $\H^2$.
\end{itemize}
The goal of the present paper is to construct such components for a larger class of Lie groups.

\vskip\baselineskip
To this end, we will exploit a very powerful tool for understanding the topology of relative character varieties: the \emph{non-Abelian Hodge correspondence}. When we fix a Riemann surface structure on the closed surface $\Sigma_{g,0}$, this correspondence associates to every irreducible linear representation of $\Gamma_{g,0}$ a \emph{stable Higgs bundle} over the Riemann surface. One can use tools from complex geometry to understand topological properties of the moduli space of Higgs bundles, and infer results on character varieties (see \cite{HitchinTeichComp}). The non-Abelian Hodge correspondence has been extended by Simpson and others \cite{SimpsonNonCompact,BiquardGarcia-Prada} to a correspondence between representations of punctured surfaces and \emph{parabolic Higgs bundles}.

Most of the theorems (if not all) about the topology of (relative) character varieties of surfaces can be recovered and improved using non-Abelian Hodge theory. As a matter of fact, shortly after Deroin--Tholozan's results, Mondello used non-Abelian Hodge theory to give a complete description of relative character varieties of punctured surfaces into $\PSL(2,\R)$ \cite{Mondello}, showing in particular that Deroin--Tholozan's compact components could be recovered via Higgs bundle techniques.

\vskip\baselineskip 
Here, we use non-Abelian Hodge theory to construct compact components in relative character varieties of punctured spheres into Lie groups of Hermitian type. We will prove the following theorem:
 
 \begin{MonThm} \label{t:ExistenceCompactComponents}
 Let $G$ be one of the classical Hermitian Lie groups $\SU(p,q)$, $\Sp(2n,\R)$ and $\SO^*(2n)$. For any $s\geq 3$, there exists a tuple $h = (h_1, \ldots , h_s) \in G^s$ such that the relative character variety
 \[\Rep_h(\Sigma_{0,s},G)\]
 has a compact connected component which contains a Zariski dense representation.
 \end{MonThm}
 
 \begin{rmk}
Most of the paper focuses on representations into $\SU(p,q)$. We discuss $\Sp(2n,\R)$ and $\SO^*(2n)$ in Section \ref{ss:MoreLieGroups}. Note that the only family of classical Hermitian Lie groups which our theorem does not cover is $\SO(2,n)$. See Section \ref{sss:SO(2,n)}.
 \end{rmk}
 
The starting point of the proof is to convert this theorem into a result about moduli spaces of parabolic $\SU(p,q)$-Higgs bundles. The general theory of Higgs bundles corresponding to representations into real Lie groups started with Hitchin's paper \cite{HitchinHiggsBundles}, and was fully sorted out by Garcia-Prada and Mundet i Riera \cite{GarciaPradaMundet}, while the parabolic version of the theory was developped by Biquard, Garcia-Prada and Mundet i Riera \cite{BiquardGarcia-Prada}. Parabolic $\SU(p,q)$-Higgs bundles have been specifically studied in \cite{Garcia-PradaLogares}, where the authors chose to put aside the punctured sphere case. 

Let us identify $\Sigma_{0,s}$ with the punctured Riemann sphere $\ProjC{1}\backslash \{x_1, \ldots, x_s\}$. Very briefly, parabolic Higgs bundles on the punctured Riemann sphere corresponding to representations into $\SU(p,q)$ are the data of $(\UU_\bullet, \VV_\bullet, \gamma, \delta)$ where $\UU_\bullet$ and $\VV_\bullet$ are holomorphic vector bundles over $\ProjC{1}$ of respective rank $p$ and $q$ with a \emph{parabolic structure} at each puncture (which involves a choice of \emph{weights} between $0$ and $1$) and $\gamma$ and $\delta$ are meromorphic sections of $\Hom(\UU,\VV)\otimes \KK$ and $\Hom(\VV,\UU)\otimes \KK$ respectively, with at most simple poles at the punctures and satisfying certain constraints depending on the weights (here $\KK$ is the canonical bundle on $\ProjC{1}$). 

One associates to representations into Hermitian Lie groups a topological invariant called the \emph{Toledo invariant} (see Paragraph \ref{ss:ToledoInvariant}), which is of main importance in the topology of character varieties. Under some open condition on the weights, we show that, if the Toledo invariant of the corresponding representation is small, then $\delta$ must vanish for stable Higgs bundles. Though this condition is very restrictive, we prove that it can be satisfied for $s > 2+ \frac{p}{q} + \frac{q}{p}$, giving rise to compact components in relative character varieties. The underlying holomorphic bundles $\mathcal{U}$ and $\mathcal{V}$ in these components are both direct sums of a fixed line bundle. This fact permits a parametrization of these components by certain quiver varieties called \emph{Kronecker varieties}. For more generic weights, the parabolic structure of the Higgs bundles give rise to a decoration of those quivers that we call \textit{feathered Kronecker varieties} and construct using Geometric Invariant Theory.

This construction only works for $s$ large enough. However, restricting these representations to subsurfaces, one can prove the existence of compact components for all $s$ (see Section \ref{ss:RestrictionSubsurface}).

\vskip\baselineskip
Finally, let us point out that the representations we obtain have the same kind of properties as supra-maximal representations into $\PSL(2,\R)$.

\begin{MonThm}\label{t-thm2}
Let $G$ be one of the classical Hermitian Lie groups $\SU(p,q)$, $\Sp(2n,\R)$ and $\SO^*(2n)$. There exists an open set in $\Hom(\Gamma_{0,s}, G)$ consisting of representations $\rho$ satisfying the following properties:
\begin{itemize}
\item For every homotopy class of simple closed curve $c$ in $\Sigma_{0,s}$,
the complex eigenvalues of $\rho(c)$ have modulus $1$.
\item More generally, for every $k\geq 0$, there is a constant $C(k)$ such that, if $c$ is the homotopy class of a closed curve with at most $k$ self interserctions, then the eigenvalues of $\rho(c)$ have modulus less than $C(k)$.
\item For every identification of $\Sigma_{0,s}$ with a punctured Riemann sphere, there is a $\rho$-equivariant holomorphic map from $\widetilde{\Sigma}_{0,s}$ to the symmetric space of $G$.
\end{itemize}
\end{MonThm}

\subsection*{Structure of the article}

Section \ref{s:relativecharactervariety} introduces some background notations and recalls briefly the non-Abelian Hodge correspondence between relative character varieties and parabolic Higgs bundles. In Section \ref{s:GIT}, we recall some classical facts about Geometric Invariant Theory and its application to the construction of certain  quiver varieties. Section \ref{s:mainsection} contains the core of our work. We give a condition for a component of the moduli space of parabolic $\SU(p,q)$-Higgs bundles to be compact and prove that, for some specific choices of weights, this component is indeed non empty and isomorphic to a Kronecker variety. In Section \ref{s:PropertiesOfOurReps}, we describe some properties of the representations in these components, proving  Theorems \ref{t:ExistenceCompactComponents} and \ref{t-thm2} for $\SU(p,q)$, under the condition $s>\frac{p}{q}+\frac{q}{p}$. Finally, in Section \ref{s:MoreComponents}, we discuss some extensions of our construction (to other Hermitian Lie groups, and to spheres with fewer punctures), concluding the proof of Theorems \ref{t:ExistenceCompactComponents} and \ref{t-thm2}.

\subsection*{Acknowledgments}

This work is very much indebted to Bertrand Deroin. Our motiviation for understanding compact components for relative character varieties into general Lie groups stemmed out of long discussions with him. We also thank Olivier Biquard for discussions from which the idea emerged that one could understand compact components via parabolic Higgs bundles, and Marco Maculan for helping us clarify some details of Geometric Invariant Theory. We also thank Julien March\'e and Maxime Wolff for valuable help concerning terminology, and the anonymous referees for helping us improve the exposition of the paper. Finally we are very thankful to the GEAR Network for giving us the opportunity to organize the workshop \textit{Relative character varieties and parabolic Higgs bundles}, where we learnt the details of the theory.

\section{Character varieties and Higgs bundles}
\label{s:relativecharactervariety}

\subsection{Relative character varietes}

Throughout the paper, $p$ and $q$ denote two positive integers. Let $\C^{p,q}$ be the complex vector space $\C^{p+q}$ equipped with the following Hermitian form of signature $(p,q)$:
\[\textbf{h}(z,w)= z_1\overline{w}_1+\cdots+z_p\overline{w}_p -\cdots-z_{p+q}\overline{w}_{p+q}.\]
We define $\U(p,q)$ as the subgroup of $\GL(p+q,\C)$ preserving $\textbf{h}$ and by $G=\SU(p,q)$ the subgroup of $\U(p,q)$ consisting of matrices of (complex) determinant $1$.

\smallskip
Given an element $g$ in a semi-simple Lie group $G$, we define $C(g)$ as the set of elements $h\in G$ such that the closures of the conjugacy orbits of $g$ and $h$ intersect. For instance, if $g$ is diagonalizable with distinct eigenvalues, then $C(g)$ is exactly the conjugacy orbit of $g$. In general, $C(g)$ consists in a finite union of conjugacy orbits, a single one of which is closed.

Let $\Sigma_{g,s}$ denote the oriented surface of genus $g$ with $s$ punctures. We denote by $\Gamma_{g,s}$ its fundamental group. Let $c_1, \ldots, c_s$ denote homotopy classes of loops going counter-clockwise around the punctures of $\Sigma_{g,s}$. Finally, fix $h = (h_1,\ldots, h_s) \in G^s$.

\begin{defi}\label{d:relativecharactervariety}
A morphism $\rho: \Gamma_{g,s} \to G$ is \emph{of type $h$} if $\rho(c_k)$ belongs to $C(h_k)$ for all $k\in\{1,\ldots,s\}$. The \textit{relative character variety} $\Rep_h(\Sigma_{g,s}, G)$ is defined as
\[\Rep_h(\Sigma_{g,s}, G) = \{ \rho : \Gamma_{g,s} \to \SU(p,q) \mid \rho(c_k) \in C(h_k)\}\sslash G~,\]
where $\sslash G$ denotes the equivalence relation identifying two representations $\rho$ and $\rho'$ when the closures of their conjugation orbits under $G$ intersect.
\end{defi}

\begin{rmk}
If $G$ is a complex algebraic group, then the quotient $\sslash G$ is an algebraic quotient in the sense of Geometric Invariant Theory (GIT). For real Lie groups, however, some subtleties make it harder to define a nice algebraic quotient.
\end{rmk}

\begin{rmk}
Most morphisms from $\Gamma$ to $G$ are irreducible and thus have a closed orbit under conjugation. Actually, if $s>0$, then all morphisms of type $h$ are irreducible for a generic choice of $h$, so that $\Rep_h(\Sigma_{g,s}, G)$ is simply the space of conjugacy classes of morphisms from $\Gamma_{g,s}$ to $G$ of type $h$.
\end{rmk}

\subsubsection{Toledo invariant}\label{ss:ToledoInvariant}

The Lie group $\SU(p,q)$ is Hermitian, meaning that its symmetric space carries a $\SU(p,q)$-invariant K\"ahler structure. The K\"ahler form induces a continuous cohomology class $\omega$ of degree $2$ on $\SU(p,q)$ (seen as a topological group). Given a morphism $\rho: \Gamma_{g,0} \to \SU(p,q)$, one can pull back $\omega$ by $\rho$ to obtain an element of $H^2(\Gamma_{g,0}, \R) \cong H^2(\Sigma_{g,0}, \R) \cong \R$. For a surface with punctures, this construction does not work directly since $H^2(\Sigma_{g,s}, \R) =\{0\}$, but one can fix this issue using bounded cohomology \cite{BurgerBounded}. This eventually defines a function
\[\mathrm{Tol} : \Rep(\Gamma_{g,s}, G) \to \R\]
called the \emph{Toledo invariant}. The Toledo invariant is continuous and additive in the following sense:

\begin{prop}[see \cite{BurgerBounded}]
If $b$ is a simple closed curve on $\Sigma_{g,s}$ separating $\Sigma_{g,s}$ into two surfaces $\Sigma'$ and $\Sigma''$, then for every representation $\rho:\Gamma_{g,s} \to \SU(p,q)$,
\[\mathrm{Tol}(\rho) = \mathrm{Tol}(\rho_{|\pi_1(\Sigma')}) + \mathrm{Tol}(\rho_{|\pi_1(\Sigma'')|})~.\]
\end{prop}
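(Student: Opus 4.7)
The plan is to unwind the cohomological definition of the Toledo invariant and exploit the additivity of the relative fundamental class of the surface under decomposition along a separating simple closed curve. The Kähler form on the symmetric space of $\SU(p,q)$ is bounded as a differential form, so its continuous cohomology class refines to a bounded continuous class $\omega_b \in H^2_{cb}(\SU(p,q), \R)$. For any representation $\rho$ of $\Gamma_{g,s}$, the pullback $\rho^\ast \omega_b$ lives in $H^2_b(\Gamma_{g,s}, \R)$, which via the canonical comparison maps can be paired with the relative fundamental class of the surface. In other words, I would use the formula
\[\mathrm{Tol}(\rho) = \bigl\langle \rho^\ast \omega_b,\, [\Sigma_{g,s}, \partial \Sigma_{g,s}] \bigr\rangle,\]
which is the incarnation of the Toledo invariant adapted to punctured surfaces.

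Given the separating curve $b$, I would write $\Sigma_{g,s} = \Sigma' \cup_b \Sigma''$, viewing each piece as a punctured surface with an extra boundary component coming from $b$. A standard Mayer--Vietoris argument shows that the relative fundamental class decomposes as
\[[\Sigma_{g,s}, \partial\Sigma_{g,s}] = j'_\ast [\Sigma', \partial \Sigma'] + j''_\ast [\Sigma'', \partial\Sigma''],\]
where $j'$ and $j''$ denote the inclusions of the two pieces. Pairing both sides with $\rho^\ast \omega_b$ and invoking the naturality of pullback under $j'$ and $j''$ (so that the restriction of $\rho^\ast \omega_b$ to $\Sigma'$ is $(\rho|_{\pi_1(\Sigma')})^\ast \omega_b$, and similarly for $\Sigma''$) yields
\[\mathrm{Tol}(\rho) = \mathrm{Tol}(\rho|_{\pi_1(\Sigma')}) + \mathrm{Tol}(\rho|_{\pi_1(\Sigma'')}),\]
which is the desired additivity.

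The main difficulty is not the algebra but rather the careful setup: for a punctured surface, $H^2(\Sigma_{g,s}, \R) = 0$, so the naive pullback of the Kähler class carries no information, and one genuinely needs the bounded refinement $\omega_b$ together with the relative fundamental class in order for $\mathrm{Tol}$ to be a well-defined real number. The compatibility of these identifications with restriction to the subsurfaces $\Sigma'$ and $\Sigma''$ —in particular that the isomorphism between bounded cohomology of $\Gamma_{g,s}$ and of the surface intertwines the restriction homomorphisms on the group side with the inclusion-induced maps on the topological side— is the most delicate point, and is precisely what is handled in the general formalism of bounded cohomology for surfaces with boundary that we invoke from \cite{BurgerBounded}. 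Once this machinery is in place, the proof reduces to the Mayer--Vietoris decomposition of the fundamental class and the naturality of pullback.
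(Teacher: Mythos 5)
The paper does not prove this proposition; it is quoted directly from \cite{BurgerBounded}, so there is no in-paper argument to compare against. Your sketch is essentially the standard proof from that reference, and it is correct in outline. The one point worth making explicit is that the pushforwards $j'_\ast[\Sigma',\partial\Sigma']$ and $j''_\ast[\Sigma'',\partial\Sigma'']$ do not land in $H_2(\Sigma_{g,s},\partial\Sigma_{g,s})$ but in $H_2(\Sigma_{g,s},\partial\Sigma_{g,s}\cup b)$, since $b$ is a boundary component of the pieces but not of $\Sigma_{g,s}$; to pair with these classes one must first lift $\rho^\ast\omega_b$ to a class relative to $\partial\Sigma_{g,s}\cup b$, which is possible precisely because $\pi_1(b)\cong\Z$ is amenable, so that $H^2_b(\Sigma_{g,s},\partial\Sigma_{g,s}\cup b)\to H^2_b(\Sigma_{g,s})$ is an isomorphism. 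You correctly flag this as the delicate step and defer it to the bounded-cohomology formalism of \cite{BurgerBounded}, which is acceptable here.
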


For closed surfaces, the Toledo invariant takes values into $r\Z$ for some rational number $r$ (depending on the normalization of the cohomology class $\omega$). For punctured surfaces, the image of $\mathrm{Tol}$ is not discrete, but the congruence of $\mathrm{Tol}(\rho)$ modulo $r \Z$ only depends on $C(h_1), \ldots, C(h_s)$ \cite{BurgerBounded}. In particular, the Toledo invariant is constant on connected components of relative character varieties. We will denote by $\Rep_h^\tau(\Sigma_{g,s}, \SU(p,q))$ the set of representations $\rho \in \Rep_h(\Sigma_{g,s},\SU(p,q))$ such that $\mathrm{Tol}(\rho) = \tau$. We will sometimes call this space a \emph{relative component}, though it may not be connected in general.

\subsection{Parabolic Higgs bundles} The non-Abelian Hodge correspondence, developed by Hitchin, Simpson and  many others \cite{HitchinHiggsBundles,SimpsonHiggsBundles}, gives a homeomorphism between the character variety of a closed Riemann surface and the moduli space of Higgs bundles over this surface. This correspondence was refined by Simpson \cite{SimpsonNonCompact} and Biquard--Garc\'ia-Prada--Mundet i Riera \cite{BiquardGarcia-Prada} to a correspondence between relative character varieties of punctured Riemann surfaces and moduli spaces of \emph{parabolic Higgs bundles}. In this subsection, we introduce parabolic Higgs bundles and state the non-Abelian Hodge correspondence in the case that interests us.

\subsubsection{Parabolic vector bundles}

Let $X$ be a closed Riemann surface and $x$ a point on $X$. By a vector bundle over $X$, we always mean a holomorphic vector bundle. We denote vector bundles with curly letters and use the same capital letter to denote the corresponding sheaf of holomorphic sections.

\begin{defi}
Given a vector bundle $\EE$ over $X$, a \textit{parabolic structure at $x\in X$} is a filtration $\{E_\alpha\}_{\alpha\in \R}$ of the germs of meromorphic sections of $\EE$ at $x$, such that
\begin{itemize}
	\item $\{E_\alpha\}_{\alpha\in \R}$ is decreasing, that is $E_{\alpha'}\supset E_\alpha$ whenever $\alpha'\leq \alpha$.
	\item $\{E_\alpha\}_{\alpha\in \R}$ is left continuous, \emph{i.e.} for any $\alpha\in\R$, there exists $\epsilon>0$ with $E_{\alpha-\epsilon}=E_\alpha$.
	\item $E_0$ are the germs of holomorphic sections of $\EE$ at $x$ and $E_{\alpha+1}=E_\alpha(-x)$ (that is, $s\in E_\alpha$ if and only if $zs\in E_{\alpha+1}$, where $z$ is a local coordinate centered at $x$).
\end{itemize}
\end{defi}

\begin{rmk}
Given a vector bundle $\EE$ over $X$, one can always equip $\EE$ with the \textit{trivial parabolic structure} $\{E_\alpha \}_{\alpha\in\R}$ at $x\in X$ by setting $E_\alpha=E_0(-\lfloor\alpha\rfloor x)$, where $E_0$ is the space of germs of holomorphic sections of $\EE$ at $x$ and $\lfloor\alpha\rfloor$ is the integer part of $\alpha$.
\end{rmk}

Given two vector bundles $\EE$ and $\FF$ with a parabolic structure at $x\in X$, their direct sum and tensor product are respectively defined to be the bundles $\EE\oplus \FF$ and $\EE\otimes \FF$ with the parabolic structure $\left\{(E\oplus F)_\alpha \right\}_{\alpha\in\R}$ and $\{(E\otimes F)_\alpha \}_{\alpha\in\R}$ at $x$ where
\[\left\{\begin{array}{lll} \big(E \oplus F\big)_\alpha & = & E_\alpha \oplus F_\alpha \\
\big(E\otimes F \big)_\alpha & = & \mathrm{Span}\,\displaystyle{ \left(\bigcup_{\alpha_1+\alpha_2=\alpha} E_{\alpha_1}\otimes F_{\alpha_2}\right)~.}\end{array}\right. \]
If $\FF\subset \EE$ is a subbundle and $\EE$  has a parabolic structure at $x$, then $\FF$ inherits a parabolic structure $\{F_\alpha \}_{\alpha\in\R}$ where $F_\alpha$ are the germs of meromorphic section of $\FF$ at $x$ contained in $E_\alpha$.

Given a vector bundle $\EE$ with a parabolic structure at $x\in X$, we call a \textit{jump} of the filtration $\{E_\alpha\}_{\alpha\in\R}$  a number $\alpha\in\R$ such that $E_\alpha\neq E_{\alpha+\epsilon}$ for any $\epsilon>0$. The normalization condition $E_{\alpha+1}=E_\alpha(-x)$ implies that the parabolic structure is fully determined by the set $\{(E_{\alpha_1},\alpha_1),\ldots,(E_{\alpha_r},\alpha_r)\}$ where $\alpha_1,\ldots,\alpha_r\in[0,1)$ are the jumps of the filtration in $[0,1)$.

Denote by $\EE_x$ the fiber of $\EE$ over $x$. The evaluation map 
\[\textbf{ev}:\left\{\begin{array}{lll}  E_0 & \longrightarrow & \EE_x \\
s & \longmapsto & s(x) \end{array}\right.,\]
has kernel $E_1=E(-x)$. In particular, $\textbf{ev}$ gives an isomorphism $E_0/E_1\cong \EE_x$ and identifies $E_{\alpha_i}\subset E_0$ with a linear subspace $\EE_{x,i}\subset \EE_x$. In other words, a parabolic structure at $x$ is equivalent to a weighted flag 
\[\EE_x=\EE_{x,1}\supset \EE_{x,2}\supset\cdots\supset \EE_{x,r},~0\leq \alpha_1<\cdots<\alpha_r<1.\]

Setting $k_i=\dim\left(\EE_{x,i+1}/\EE_{x,i}\right)$, the \textit{parabolic type} of $\EE$ at $x$ is the $n$-tuple $(\tilde\alpha_1,\ldots,\tilde\alpha_n)\in [0,1)^n$ (where $n$ is the rank of $\EE$) such that $\alpha_1=\tilde\alpha_1=\cdots=\tilde\alpha_{k_1},~\alpha_2=\tilde\alpha_{k_1+1}=\cdots=\tilde\alpha_{k_1+k_2}$ and so on.

\begin{defi} A morphism $f: \EE \to \FF$ of vector bundles with parabolic structure of type $\alpha$ and $\beta$ at $x\in X$ is \textit{parabolic} if $\alpha_i\geq \beta_j$ implies $f(\EE_{x,i})\subset \FF_{x,j}$. It is \emph{strongly parabolic} if $\alpha_i\geq \beta_j$ implies
 $f(\EE_{x,i})\subset \FF_{x,j+1}$.
\end{defi}

Given a punctured Riemann surface $X$ (\emph{i.e.} a Riemann surface with cusps) with closure $\overline{X}=X\cup\{x_1,\ldots,x_s\}$, a \textit{parabolic bundle of rank $n$ over $X$} will be a rank $n$ vector bundle $\EE$ over $\overline{X}$ together with a parabolic structure at each $x_i\in\{x_1,\ldots,x_s\}$. We denote by $\EE_\bullet$ such a parabolic bundle. The \textit{parabolic type} of $\EE_\bullet$ is $\alpha=(\alpha^1,\ldots,\alpha^s)$ where $\alpha^i=(\alpha^i_1,\ldots,\alpha^i_n)\in[0,1)^n$ is the parabolic type of $\EE_\bullet$ at $x_i$. Finally, we denote by $\Hom(\EE_\bullet,\FF_\bullet)$ the bundle of strongly parabolic morphisms from $\EE_\bullet$ to $\FF_\bullet$, and write $\End(\EE_\bullet)=\Hom(\EE_\bullet,\EE_\bullet)$.

The \textit{dual} of a parabolic bundle $\EE_\bullet$ is the parabolic bundle $\EE_\bullet^\vee=\Hom(\EE_\bullet,\OO)$ where $\OO$ is the trivial bundle equipped with the trivial parabolic structure. Note that the underlying bundle of $\EE_\bullet^\vee$ is $\EE^\vee(-D)$ where $D$ is the effective divisor $x_1+\cdots+x_s$.

\begin{defi}
The \textit{parabolic degree} of a parabolic bundle $\EE_\bullet$ of type $\alpha$ over $X$ is the number
\[\deg(\EE_\bullet) \equaldef \deg(\EE) + \Vert \alpha \Vert,\]
where $\Vert \alpha\Vert= \sum_{i=1}^s\sum_{j=1}^n \alpha^i_j$.
\end{defi}

\begin{exe}[Parabolic line bundles over $\ProjC{1}$]
Recall that $\ProjC{1}$ carries a unique line bundle of degree $l$ for all $l\in \Z$, denoted $\OO(l)$. Let $X=\ProjC{1}\setminus\{x_1,\ldots,x_s\}$ be a punctured Riemann sphere.  A parabolic line bundle over $X$ is thus given by a parabolic structure over some $\OO(l)$ at each $x_i$. Since a flag in $\OO(l)_{x_i}$ is necessarily trivial, such a parabolic structure is simply given by its type $\alpha^i\in[0,1)$. We will denote this parabolic line bundle by $\OO\big(l+\sum_{i=1}^s \alpha^ix_i\big)$.

Consider now two parabolic line bundles $\LL_\bullet=\OO\big(l+\sum_{i=1}^s \alpha^ix_i\big)$ and $\MM_\bullet=\OO\big(m+\sum_{i=1}^s \beta^ix_i\big)$. Let $\epsilon^i=\lfloor \alpha^i+\beta^i\rfloor\in\{0,1\}$ be the integer part of $\alpha^i+\beta^i$ and set $\vert\epsilon\vert = \sum_{i=1}^s \epsilon^i$. The tensor product $\LL_\bullet\otimes \MM_\bullet$ is thus the line bundle of degree $l+m+\vert\epsilon\vert$ whose parabolic weight at $x_i$ is $\alpha^i+\beta^i - \epsilon^i\in [0,1)$.

If $\alpha^i< \beta^i$, then any morphism $f: \LL \to \MM$ of line bundles is strongly parabolic at $x_i$. On the other hand, if $\alpha^i\geq\beta^i$, a morphism $f: \LL \to \MM$ is strongly parabolic at $x_i$ only if it comes from a morphism $f: \LL \to \MM(-x_i)$. Therefore, $\Hom(\LL_\bullet,\MM_\bullet)=\OO(m-l-k)$, where $k$ is the number of punctures $x_i$ with $\alpha^i\geq\beta^i$.
\end{exe}

\subsubsection{Parabolic $\SU(p,q)$-Higgs bundles}\label{s:su(p,q)higgsbbundles}
Recall that $X=\overline{X}\setminus\{x_1,\ldots,x_s\}$. We denote by $D$ the effective divisor $\sum_{i=1}^s x_i$. Let $\mathcal{K}$ denote the canonical bundle of $X$. 

\begin{defi}
A \textit{parabolic Higgs bundle} over $X$ is a pair $(\EE_\bullet,\Phi)$ where $\EE_\bullet$ is a parabolic bundle and $\Phi$ is a holomorphic section of $\KK(D)\otimes \End(\EE_\bullet)$.
\end{defi}

\begin{rmk}
In this definition, the Higgs field $\Phi$ is a strongly parabolic morphism from $\EE_\bullet$ to $\EE_\bullet\otimes \KK(D)$. Stricto sensu, this is the definition of a \emph{strongly parabolic Higgs bundle}. In this paper, however, we limit ourselves to strongly parabolic Higgs bundles, so we omit this precision.
\end{rmk}

\begin{defi}
A \textrm{parabolic $\SU(p,q)$-Higgs bundle} is a parabolic Higgs bundle $(\EE_\bullet,\Phi)$ such that
\begin{itemize}
	\item[--] $\EE_\bullet=\UU_\bullet\oplus \VV_\bullet$ where $\UU_\bullet$ and $\VV_\bullet$ are parabolic bundles of respective rank $p$ and $q$.
	\item[--] With respect to the splitting $\EE_\bullet=\UU_\bullet\oplus \VV_\bullet$, the Higgs field $\Phi$ takes the form $\left(\begin{array}{ll} 0 & \delta \\ \gamma & 0 \end{array}\right)$ where $\gamma\in H^0\big(\KK(D)\otimes\Hom(\UU_\bullet,\VV_\bullet)\big)$ and $\delta\in H^0\big(\KK(D)\otimes\Hom(\VV_\bullet,\UU_\bullet) \big)$.
	\item[--] $\det(\EE_\bullet)=\det(\UU_\bullet)\otimes\det(\VV_\bullet)$ has trivial parabolic structure and is isomorphic to the trivial bundle $\OO$.
\end{itemize}
\end{defi}
We will denote parabolic $\SU(p,q)$-Higgs bundles by $(\UU_\bullet\oplus \VV_\bullet,\gamma\oplus\delta)$.

\begin{defi}
A parabolic $\SU(p,q)$-Higgs bundle $(\UU_\bullet\oplus \VV_\bullet,\gamma\oplus\delta)$ is \textit{stable} (respectively \emph{semistable}) if any $\gamma\oplus\delta$-invariant subbundle $\UU_\bullet'\oplus \VV_\bullet'$ where $\UU'_\bullet\subset \UU_\bullet$ and $\VV'_\bullet\subset \VV_\bullet$ satisfies $\deg(\UU'_\bullet\oplus \VV'_\bullet)<0$ (respectively $\deg(\UU'_\bullet\oplus \VV'_\bullet)\leq 0$). It is \textit{polystable} if it is the direct sum of stable parabolic Higgs bundles of degree~$0$.
\end{defi}

The \textit{parabolic type} of a parabolic $\SU(p,q)$-Higgs bundle $(\UU_\bullet\oplus \VV_\bullet,\gamma\oplus\delta)$ is the pair $(\alpha,\beta)$ where $\alpha$ and $\beta$ are respectively the parabolic types of $\UU_\bullet$ and~$\VV_\bullet$.

Note that the parabolic type of the parabolic line bundle $\det(\UU_\bullet)\otimes \det(\VV_\bullet)$ at $x_i$ is given by the quantity $\sum_{j=1}^p \alpha_j^i + \sum_{j=1}^q\beta_j^i$. The condition $\det(\UU_\bullet)\otimes\det(\VV_\bullet)=\mathcal O$ implies that $\sum_{j=1}^p \alpha_j^i + \sum_{j=1}^q\beta_j^i$ is integral. We denote by $\WW(s,p,q)$ the subset of $\left(\R^p\right)^s \times \left( \R^q\right)^s$ consisting of tuples $(\alpha,\beta)$ such that
\[0\leq \alpha_1^i \leq \cdots \leq \alpha_p^i <1~,\]
\[0\leq \beta_1^i \leq \cdots \leq \beta_q^i < 1\]
and
\[\sum_{j=1}^p \alpha_j^i+ \sum_{j=1}^q \beta_j^i\in \N\]
for all $i\in \{1,\ldots, s\}$. Elements of $\WW(s,p,q)$ will be called \emph{$\SU(p,q)$-multiweights}.

\subsubsection{Moduli spaces}

Moduli spaces of semistable Higgs bundles can be constructed using Geometric Invariant Theory. This was done by Yokogawa in a fairly general setting which includes parabolic Higgs bundles \cite{Yokogawa1,Yokogawa2}. In our setting, we have the following
\begin{theo}
For every $\SU(p,q)$-multiweight $(\alpha,\beta)$, there is a normal quasi-projective variety $\MM(\alpha,\beta)$ and, for every algebraic family of semistable parabolic $\SU(p,q)$-Higgs bundles of type $(\alpha,\beta)$ over a base $B$, an algebraic map $f_B: B\to \MM(\alpha,\beta)$ such that:
\begin{itemize}
\item Every point of $\MM(\alpha, \beta)$ is in the image of $f_B$ for some family $B$,
\item given two such families over bases $B$ and $B'$ and two points $x\in B$ and $y\in B'$, if the Higgs bundles attatched to these two points are polystable, then they are isomorphic if and only if $f_B(x) = f_{B'}(y)$.
\end{itemize}
\end{theo}

There is a continuous map
\[\function{\varphi}{\MM(\alpha,\beta)}{\Z}{(\UU_\bullet\oplus \VV_\bullet,\gamma\oplus\delta)}{\deg(\UU)-\deg(\VV)~.}\]
 In particular, $\MM(\alpha,\beta)= \bigsqcup_{d\in\Z} \MM(\alpha,\beta,d)$, where $\MM(\alpha,\beta,d)=\varphi^{-1}(d)$.

\begin{rmk}\label{r-degreeU}
Using the equality $\deg(\UU_\bullet)+\deg(\VV_\bullet)= \deg(\UU)+\deg(\VV)+\Vert\alpha\Vert+\Vert\beta\Vert=0$, one sees that if $(\UU_\bullet\oplus \VV_\bullet,\gamma\oplus\delta)\in\MM(\alpha,\beta,d)$, then 
\[\left\{\begin{array}{lll}
\deg(\UU) & = & -\frac{1}{2}(\Vert\alpha\Vert+\Vert\beta\Vert-d) \\
\deg(\VV) & = & -\frac{1}{2}(\Vert\alpha\Vert+\Vert\beta\Vert +d)~.\end{array}\right.\]
\end{rmk}

\begin{exe}\label{ex:p=q=1} In this example, we describe the case $p=q=1$ and $X=\ProjC{1}\setminus\{x_1,\ldots,x_s\}$. Recall that $\OO(d)$ denotes the only line bundle of degree $d$ over $\ProjC{1}$.

In a parabolic $\SU(1,1)$-Higgs bundle $(\UU_\bullet\oplus \VV_\bullet,\gamma\oplus\delta)$, $\UU_\bullet$ and $\VV_\bullet$ are parabolic line bundles $\OO(l+\sum_{i=1}^s\alpha^i x_i)$ and $\OO(m+\sum_{i=1}^s \beta^ix_i)$ respectively. The condition $\det(\UU_\bullet)\otimes\det(\VV_\bullet)=\mathcal O$ implies that $\alpha^i+\beta^i \in\{0,1\}$ for all $i=1,\ldots,s$. 

We now consider the case where $\alpha^i < \beta^i$ for all $i\in\{1,\ldots,s\}$. This implies that $\alpha^i=1-\beta^i\in (0,\frac{1}{2})$, and $\deg(\UU)+\deg(\VV)=-s$ so $d=\deg(\UU)-\deg(\VV)=2l+s$. One sees that if the parity of $d$ is different from the one of $s$, then $\MM(\alpha,\beta,d)=\emptyset$. Consider now the Higgs field. Since $\alpha^i<\beta^i$, we have
\[ \left\{ \begin{array}{lllll} 
\gamma\in H^0(\KK(D)\otimes\Hom(\UU_\bullet,\VV_\bullet)) & = & H^0(\KK(D)\otimes\Hom(\UU,\VV)) & = & H^0(\OO(-2l-2)) \\
\delta\in H^0(\KK(D)\otimes\Hom(\VV_\bullet,\UU_\bullet)) & = & H^0(\KK\otimes\Hom(\VV,\UU)) & = & H^0\big(\OO(2l+s-2))\end{array}\right..\]

Finally, let us describe a particular component. Suppose $s$ is odd and look at the case $d=1$. In particular, $l=\deg(\UU)=-\frac{s-1}{2}$ and $\deg(\VV)=-\frac{s+1}{2}$. It follows that 
\[\deg(\KK\otimes\Hom(\VV,\UU))=2l+s-2=-1<0\]
and so $\delta=0$. One also sees that $\gamma\in H^0(\OO(s-3))$. Such a parabolic Higgs bundle is stable if and only if $\norm{\beta} =\sum_{i=1}^s\beta^i<\frac{s+1}{2}$ and $\gamma\neq 0$.

Two such stable Higgs bundles given by $\gamma,\gamma'\in H^0(\OO(s-3))$ are isomorphic if and only if $[\gamma]=[\gamma']$ in $\P\left(H^0(\OO(s-3)) \right)$. We thus have $\MM(\alpha,\beta,1)\cong \ProjC{s-3}$.

This component corresponds to a lift to $\SU(1,1)$ of a supra-maximal component in $\PSL(2,\R)\cong \PU(1,1)$ studied by Deroin and the first author in \cite{DeroinTholozan}. The goal of the present paper is to extend this phenomenon to $\SU(p,q)$.
\end{exe}

\begin{defi}
Given a rank $n$ parabolic Higgs bundle $(\EE_\bullet,\Phi)$ over $X$, one can associate a vector $\Pi_{\textrm{Hitchin}}(\EE_\bullet,\Phi) \overset{\textrm{def}}{=} (\tr(\Phi^i))_{1\leq i \leq n}\in  H^0(\KK(D)^i )$. This defines the \textit{Hitchin fibration}
\[\Pi_{\textrm{Hitchin}}:  \MM(\alpha,\beta)  \longrightarrow  \bigoplus_{i=1}^{n} H^0(\KK(D)^i)~.\]
\end{defi}

The following was proven by Hitchin \cite{HitchinIntegrable} for closed Riemann surfaces and extended by Yokogawa \cite{Yokogawa2} to the parabolic case:

\begin{theo}\label{t:PropernessHitchinMap}
The Hitchin fibration is proper.
\end{theo}

\subsubsection{Non-abelian Hodge correspondence}\label{ss:NAH} Our main interest in the moduli space $\MM(\alpha,\beta)$ of parabolic $\SU(p,q)$-Higgs bundles of type $(\alpha,\beta)$ is the non-Abelian Hodge correspondence \cite{SimpsonNonCompact,BiquardGarcia-Prada}.

Let $(\alpha, \beta)$ be a $\SU(p,q)$-multiweight. Let $h=h(\alpha,\beta)$ be the tuple of matrices $(h_1,\ldots, h_s)$ where $h_k$ is the diagonal matrix of size $p+q$ with diagonal coefficients 
\[e^{2\pi i \alpha_1^k}, \ldots,e^{2\pi i \alpha_p^k}, e^{2\pi i \beta_1^k}, \ldots, e^{2\pi i \beta_q^k}~.\]
The definition of a $\SU(p,q)$-multiweight implies that each $h_k$ belongs to $\SU(p,q)$.

\begin{theo}[Non-Abelian Hodge correspondence]
There exist a homeomorphism $\mathsf{NAH}$ from the moduli space $\MM(\alpha,\beta)$ to the relative character variety $\Rep_h(\Sigma_{0,s},\SU(p,q))$. Through this correspondence, stable Higgs bundles correspond to irreducible representations.
\end{theo}

This correspondence is highly transcendental in the sense that it relies on the resolution of a geometric PDE on Hermitian metrics on the bundle. It is thus quite difficult to understand geometric properties of a representation associated to a given Higgs bundle. There are nevertheless a few attributes that transit well through the correspondence, among which is the Toledo invariant.

\begin{prop}
Let $(\UU_\bullet \oplus \VV_\bullet, \gamma \oplus \delta)$ be a polystable $\SU(p,q)$-Higgs bundle and let $\rho$ be its image by $\mathsf{NAH}$. Then (with the proper normalization of the Toledo invariant) one has
\[\mathrm{Tol}(\rho) = \deg(\VV_\bullet) - \deg(\UU_\bullet)~.\]
\end{prop}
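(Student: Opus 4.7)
The plan is to express both sides of the identity as integrals of Chern--Weil forms and to match them through the adapted harmonic metric provided by the non-Abelian Hodge correspondence. First, I would apply the theorem of Simpson, Biquard and Garcia-Prada--Mundet i Riera that produces, from the polystable parabolic Higgs bundle $(\UU_\bullet \oplus \VV_\bullet, \gamma \oplus \delta)$, a Hermitian metric $h = h_\UU \oplus h_\VV$ on $\UU \oplus \VV$ adapted to the parabolic structures at $x_1,\dots,x_s$ and whose Chern connection, combined with $\Phi$ and its $h$-adjoint, is flat with monodromy conjugate to $\rho$. The block-diagonal shape of $h$ realises a reduction of this flat $\SU(p,q)$-bundle to the maximal compact subgroup $\mathrm{S}(\U(p) \times \U(q))$, which is the same as the datum of a $\rho$-equivariant smooth map $\tilde f : \tilde X \to \SU(p,q)/\mathrm{S}(\U(p) \times \U(q))$.

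Second, I would rewrite the Toledo invariant as the integral of $\tilde f^{\,*}\omega_{\mathrm K}$ over a fundamental domain, where $\omega_{\mathrm K}$ is the normalised invariant Kähler form on the symmetric space. Viewing the latter as the open subset of $q$-dimensional negative subspaces of $\C^{p,q}$, one finds that $\omega_{\mathrm K}$ equals the difference of the Chern curvatures of the tautological negative bundle and its orthogonal complement. Pulling back by $\tilde f$ identifies these two tautological bundles with $(\VV, h_\VV)$ and $(\UU, h_\UU)$, giving the pointwise identity
\[
\tilde f^{\,*}\omega_{\mathrm K} \;=\; \frac{i}{2\pi}\bigl(\mathrm{tr}\, F_{h_\VV} - \mathrm{tr}\, F_{h_\UU}\bigr).
\]

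Third, I would evaluate each curvature integral. Because $h_\UU$ and $h_\VV$ are adapted to the parabolic filtrations at every puncture, their local models in suitable frames are $\mathrm{diag}(|z|^{2\alpha_j^i})$ and $\mathrm{diag}(|z|^{2\beta_j^i})$, and a standard model computation (the Chern--Weil formula for parabolic bundles with adapted metrics) yields
\[
\int_X \tfrac{i}{2\pi}\,\mathrm{tr}\, F_{h_\UU} \;=\; \deg \UU_\bullet, \qquad \int_X \tfrac{i}{2\pi}\,\mathrm{tr}\, F_{h_\VV} \;=\; \deg \VV_\bullet.
\]
The ``missing mass'' of each curvature near $x_i$ sums to $\sum_j \alpha_j^i$ (respectively $\sum_j \beta_j^i$), recovering the weight contribution in $\deg \UU_\bullet = \deg \UU + \Vert\alpha\Vert$ and $\deg \VV_\bullet = \deg \VV + \Vert\beta\Vert$. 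Subtracting the two identities gives the proposition.

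The main obstacle will be reconciling normalisations. One must check that the multiplicative constant in the identity expressing $\omega_{\mathrm K}$ in terms of the tautological Chern curvatures matches the paper's normalisation of the Toledo invariant, and, more subtly, that the asymptotic boundary terms arising in the Chern--Weil integrals are exactly those that the bounded cohomological definition of $\mathrm{Tol}$ attaches to the parabolic conjugacy classes $h(\alpha,\beta)$. Once these normalisations are fixed consistently, the three steps above combine to give the formula.
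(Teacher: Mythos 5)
Your outline is essentially correct, but it is worth saying up front that the paper does not prove this proposition at all: the remark immediately following it defers entirely to Garcia-Prada--Logares, only adjusting the normalization of the Toledo invariant. What you have written is a reconstruction of the standard argument behind that reference, and it is the right one: harmonic (adapted) metric from the parabolic non-Abelian Hodge theorem, identification of the metric splitting with a $\rho$-equivariant map to the Grassmannian model of the symmetric space so that $\tilde f^*U \simeq \UU$, $\tilde f^*V \simeq \VV$ (exactly the identification the paper itself uses later, in Section \ref{ss:ProofTheorem2}), and then Chern--Weil for adapted metrics turning each curvature integral into a \emph{parabolic} degree. The two caveats you flag at the end are indeed where all the real content sits, and they are not symmetric in difficulty: matching the multiplicative constant in $\omega_{\mathrm K}$ against the paper's normalization is harmless (the statement explicitly reserves the right to choose it), but the identification of the bounded-cohomology Toledo invariant of a \emph{punctured} surface with $\int \tilde f^*\omega_{\mathrm K}$ over a fundamental domain is a theorem (of Burger--Iozzi--Wienhard type), not a bookkeeping step: one must know both that the integral converges for the adapted harmonic metric and that the puncture contributions it produces agree with the $\norm{\beta}-\norm{\alpha}$ correction that the bounded class attaches to the conjugacy classes $h(\alpha,\beta)$. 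As long as you treat that comparison as a cited input rather than something to be "fixed consistently," your three steps assemble into a complete proof; this is more than the paper provides, which simply outsources the whole statement.
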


\begin{rmk}
This Proposition is proved in \cite{Garcia-PradaLogares}, where a different normalization of the Toledo invariant is used. The one we use here is not standard, but it will simplify notations.
\end{rmk}

Recall that $\norm{\alpha}$ (resp. $\norm{\beta}$) denotes the sum of the weights $\alpha_j^i$ (resp. $\beta_j^i$) for all $j$ and $i$. By definition, we have
\[\deg(\VV_\bullet) - \deg(\UU_\bullet) = \deg(\VV) - \deg(\UU) + \norm{\beta} - \norm{\alpha}~.\]
It follows that the non Abelian Hodge correspondence restricts to a homeomorphism
\[\mathsf{NAH} : \MM(\alpha, \beta,d) \overset{\sim}{\longrightarrow} \Rep_h^{\norm{\beta} - \norm{\alpha}-d}\left(\Sigma_{0,s}, \SU(p,q)\right)~.\]

\subsubsection{Parabolic Higgs bundles over the Riemann sphere}\label{ss:degreevector} In this paper, we consider the special case of $X=\ProjC{1}\setminus\{x_1,\ldots,x_s\}$. Let us briefly recall the classification of holomorphic vector bundles over~$\ProjC{1}$. 

Recall that we denote by $\OO(d)$ the unique line bundle of degree $d$ over $\ProjC{1}$.

\begin{theo}[Birkhoff--Grothendieck]\label{t:Birkhoff-Grothendieck}
Any rank $n$ vector bundle $\EE$ over $\ProjC{1}$ is isomorphic to $\OO(d_1)\oplus\cdots\oplus \OO(d_n)$ for some $d_1 \geq d_2\geq \cdots \geq d_n \in \Z^n$. Moreover, the tuple $(d_1, \ldots, d_n)$ is unique.
\end{theo}

We call $\overrightarrow{\deg}(\EE)=(d_1,\ldots, d_n)$ the \emph{degree vector} of $\EE$. Note that the degree of $\EE$ is 
\[\deg \EE = \deg \Lambda^n \EE = d_1+ \cdots + d_n~.\]

The degree vector defines a map
\[\function{(\overrightarrow{\deg}_1,\overrightarrow{\deg}_2)}{\mathcal{M}(\alpha,\beta)}{\Z^p\times \Z^q}{\left(\UU_\bullet\oplus \VV_\bullet, \gamma\oplus \delta\right)}{ (\overrightarrow{\deg}~\UU, \overrightarrow{\deg}~\VV)~.}\]

\noindent The components $\overrightarrow{\deg}_1$ and $\overrightarrow{\deg}_2$ are only upper semi-continuous for the lexicographic order on $\Z^p$ and~$\Z^q$. Their level sets define a stratification of $\mathcal{M}(\alpha,\beta)$.

\section{Geometric Invariant Theory and quiver varieties}\label{s:GIT}

In this section, we recall a few facts about Mumford's Geometric Invariant Theory \cite{GIT} and apply it to the construction of certain quiver varieties called \emph{Kronecker varieties}, as well as a mix between these and spaces of flag configurations, that we call \emph{feathered Kronecker varieties}. These varieties will arise in our construction of compact components in the next sections.

\subsection{GIT quotients of complex varieties}

Let $Y$ be a smooth quasi-projective variety with an algebraic action of a complex reductive algebraic group $G$. The goal of Geometric Invariant Theory (often abreviated in GIT) is to define a good notion of quotient of $Y$ under the action of $G$ in the category of complex algebraic varieties. We briefly recall here Mumford's construction, which relies on the choice of an ample $G$-linearized line bundle. We restrict for simplicity to the case where $Y$ is projective over an affine base. (This will be sufficient for our applications.)

We do not want to assume the action to be faithful. Let us denote by $Z$ the kernel of the action, that is, the subgroup of $G$ that acts trivially on $Y$.

\begin{defi} \label{d:GLinearization}
A \emph{$G$-linearized line bundle} $L$ is a line bundle over $Y$ together with an algebraic $G$-action on the total space of $L$ that lifts the action on $Y$, and such that $Z$ acts trivially on $L$.

\end{defi}

Let $L$ be a $G$-linearized ample line bundle and $k$ an integer. The group $G$ acts on the space of sections of $L^k$. Denote by $H^0(L^k)^G$ the subspace of $G$-invariant sections of $L^k$. The tensor product of sections defines a product $H^0(L^k)^G\times H^0(L^l)^G \to H^0(L^{k+l})^G$, endowing the space
\[R(L)^G = \bigoplus_{k=0}^{+\infty} H^0(L^k)^G\]
with the structure of a graded algebra.

\begin{defi}
The GIT quotient of the polarized variety $(Y,L)$ by $G$ is the projective scheme
\[Y^L\sslash G := \mathrm{Proj}\left(R(L)^G\right)~.\]
\end{defi}

Note that $Y^L\sslash G$ is a scheme over $\mathrm{Spec}\left(H^0(\mathcal{O})^G\right)$, where $H^0(\mathcal{O})^G$ is the algebra of $G$-invariant functions on $Y$.

\smallskip
The main discovery of Mumford is that this algebraic construction has a geometric interpretation: the (closed) points of $Y^L\sslash G$ ``almost'' parametrize the $G$-orbits in $Y$.

\begin{defi}
A point $x$ in $(Y,L)$ is called
\begin{itemize}
\item \emph{semistable} if there exists $m>0$ and a $G$-invariant section $s$ of $L^m$ such that $s(x)\neq 0$,
\item \emph{polystable} if it is semistable and its $G$-orbit is closed in the subset of semistable points,
\item \emph{stable} if it is polystable and its stabilizer is finite modulo $Z$,
\item \emph{unstable} if it is not semistable.
\end{itemize}
\end{defi}

Let $Y^{ss}(L)$ denote the space of semistable points in $(Y,L)$. $Y^{ss}(L)$ is clearly open in $Y$ for the Zariski topology (in particular, it is dense when non-empty). Let $Y^{ss}(L)\sslash G$ denote the quotient of $Y^{ss}(L)$ by the equivalence relation where $x\sim y$ if the closures of the orbits of $x$ and $y$ in $Y^{ss}(L)$ intersect.

\begin{theo}[Mumford]
The topological space $Y^{ss}(L)\sslash G$ is homeomorphic to the variety $Y^L\sslash G$.
\end{theo}

The semistability of a point $x$ can be checked by looking at the action of one parameter subgroups of $G$, thanks to the \emph{Hilbert--Mumford criterion}. This criterion is often stated for projective varieties, but it has been extended to other settings. We present here an \emph{ad hoc} version adapted from \cite[Corollary~1.1]{RelativeHilbertMumford}.

Let $\lambda : \C^* \to G$ be a $1$-parameter subgroup and $x$ a point in $Y$. Let $x_0$ denote the limit $\lim_{t\to 0} \lambda(t)\cdot x$, when it exists. Then $x_0$ is fixed by $\lambda(\C^*)$ and $\lambda(\C^*)$ thus acts linearly on the fiber $L_{x_0}$ of $L$ at $x_0$.

\begin{defi}
The \emph{Hilbert--Mumford weight} $\mu_L(\lambda, x)$ is the integer $m$ such that
\[\lambda(t)\cdot v = t^{-m} v\]
for all $v\in L_{x_0}$ and all $t\in \C^*$. If the limit $\lim_{t\to 0} \lambda(t)\cdot x$ does not exist, we set $\mu_L(\lambda,x) = +\infty$.
\end{defi}

\begin{theo}[Hilbert--Mumford criterion] \label{t:HilbertMumford}
Assume that $Y = Y_1\times Y_2$ where $Y_1$ is affine and $Y_2$ is projective, and that the $G$-action on $Y$ is induced by algebraic $G$-actions on $Y_1$ and $Y_2$. Let $L$ be a $G$-linearized ample line bundle on $Y$. Then a point $x$ in $(Y,L)$ is semistable if and only if 
\[\mu_L(\lambda,x) \geq 0\]
for every $\lambda: \C^* \to G$. It is stable if the inequality is strict unless $\lambda$ is trivial.
\end{theo}

This stability criterion will prove particularly handy when we look at GIT quotients of products. Indeed, we have the following proposition:

\begin{prop} \label{p:MuProduct}
Let $(Y_i, L_i)_{1\leq i\leq n}$ be a family of quasi-projective varieties with an algebraic $G$-action and a $G$-linearized ample line bundle. Let $Y$ denote the product $\prod_{i=1}^n Y_i$ with the diagonal action of $G$. Let $p_i: Y\to Y_i$ denote the projection to the $i$-th factor and let $L$ be the $G$-linearized line bundle on $Y$ defined as
\[L= \bigotimes_{i=1}^n p_i^* L_i~.\]
Then for every $x = (x_1,\ldots, x_n)$ in $Y$ and every $\lambda: \C^* \to G$, we have
\[\mu_L(\lambda,x) = \sum_{i=1}^n \mu_{L_i}(\lambda, x_i)~.\]
\end{prop}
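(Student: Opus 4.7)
\textbf{Proof plan for Proposition \ref{p:MuProduct}.}

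The plan is to reduce everything to the definition of the Hilbert--Mumford weight and use functoriality of pullback and tensor product of $G$--linearized line bundles. First I would observe that the limit $x_0 := \lim_{t\to 0} \lambda(t)\cdot x$ exists in $Y = \prod_i Y_i$ if and only if each coordinate limit $x_{i,0} := \lim_{t\to 0} \lambda(t)\cdot x_i$ exists in $Y_i$, since limits in a product of varieties are computed coordinatewise. Consequently, if some $x_{i,0}$ fails to exist then $\mu_{L_i}(\lambda,x_i) = +\infty$ by convention, and also $x_0$ fails to exist, giving $\mu_L(\lambda,x) = +\infty$; so the identity holds (with the usual convention $a + \infty = +\infty$) and we may henceforth assume all limits exist.

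Next I would identify the fiber of $L$ at $x_0$ with the tensor product of the fibers of the $L_i$. By construction, $L = \bigotimes_{i=1}^n p_i^\ast L_i$, and pullback commutes with taking fibers, so
\[
L_{x_0} \;\cong\; \bigotimes_{i=1}^n (p_i^\ast L_i)_{x_0} \;\cong\; \bigotimes_{i=1}^n (L_i)_{x_{i,0}},
\]
and this isomorphism is $G$--equivariant because the $G$--linearization on $L$ is, by definition, the tensor product of the pulled-back linearizations on the $p_i^\ast L_i$.

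The key step is then to check that under this identification the $\lambda(\C^\ast)$--action on $L_{x_0}$ is the tensor product of the $\lambda(\C^\ast)$--actions on the $(L_i)_{x_{i,0}}$. Given $v = v_1\otimes\cdots\otimes v_n$ with $\lambda(t)\cdot v_i = t^{-\mu_{L_i}(\lambda,x_i)} v_i$, multiplicativity of the action on a tensor product gives
\[
\lambda(t)\cdot v \;=\; t^{-\sum_{i=1}^n \mu_{L_i}(\lambda,x_i)}\, v,
\]
which by definition of $\mu_L$ means $\mu_L(\lambda,x) = \sum_{i=1}^n \mu_{L_i}(\lambda,x_i)$.

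I do not foresee a genuine obstacle: the statement is essentially a consequence of the fact that weights on tensor products add. The only point requiring a little care is bookkeeping the sign convention in $\lambda(t)\cdot v = t^{-m}v$ and the $+\infty$ convention, so that both sides are handled uniformly when some limit fails to exist. Once the compatibility of the $G$--linearization on $L$ with that of the factors $L_i$ is made explicit, the computation is immediate.
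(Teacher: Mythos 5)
Your proof is correct: the paper states Proposition \ref{p:MuProduct} without proof, and your argument (coordinatewise existence of the limit, identification of the fiber $L_{x_0}$ with $\bigotimes_i (L_i)_{x_{i,0}}$, and additivity of weights under tensor product of linearizations) is precisely the standard justification the authors implicitly rely on. The handling of the $+\infty$ convention is also consistent with the paper's Definition of the Hilbert--Mumford weight, so nothing is missing.
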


\subsection{Kronecker varieties} \label{ss:KroneckerVarieties}

Let $p,q, r$ be positive integers. The actions of $\GL(p,\C)$ and $\GL(q,\C)$ on $\C^p$ and $\C^q$ induce a linear action of the group
\[G_{p,q} \equaldef \GL(p,\C) \times \GL(q,\C)\]
on the space
\[E(p,q,r) = \Hom(\C^p, \C^q)^r\]
given by
\[(g_1,g_2)\cdot (A_j)_{1\leq j \leq r} = (g_2\circ A_j \circ g_1^{-1})_{1\leq j \leq r}~.\]
The kernel of this action is the central subgroup $Z_{p,q}= \C^*(\I_p, \I_q)$.

\smallskip
Let $\chi: G_{p,q} \to \C^*$ denote the character defined by
\[\chi: (g_1,g_2) \mapsto \det(g_2)^{p'} \det(g_1)^{-q'}~,\]
where $p' = \frac{p}{\gcd(p,q)}$ and $q' = \frac{q}{\gcd(p,q)}$. This character induces an action of $G_{p,q}$ on the trivial bundle $E(p,q,r)\times \C$ over $E(p,q,r)$, with kernel $Z_{p,q}$, defined by
\[g\cdot (x,z) = (g\cdot x, \chi^{-1}(g) z)~.\]
This provides $E(p,q,r)\times \C$ with the structure of a $G_{p,q}$-linearized line bundle, that we denote by $L_\chi$. 

\begin{rmk}
Since every line bundle over $E(p,q,r)$ is trivial and since every character on $G_{p,q}$ with kernel containing $Z_{p,q}$ is a power of $\chi$, every $G_{p,q}$-linearized line bundle over $E(p,q,r)$ is a power of $L_\chi$.
\end{rmk}

We can now define a GIT quotient of the semistable locus of $E(p,q,r)$ under the action of $G_{p,q}$. This quotient is a particular case of quiver variety sometimes refered to as a Kronecker moduli space (see for instance \cite{EulerCharacteristicKronecker}). Here we call it a \emph{Kronecker variety}.

\begin{defi}\label{d:kroneckermodulispace}
The \emph{Kronecker variety} $\mathcal{R}(p,q,r)$ is the GIT quotient
\[E(p,q,r)^{L_{\chi}}\sslash G_{p,q}~.\]
\end{defi}
Note that the only $G_{p,q}$-invariant regular functions on $E(p,q,r)$ are constant (see \cite{King}). Thus $\mathcal{R}(p,q,r)$ is a (reduced) projective scheme over $\C$, that is, a projective variety.

\begin{rmk}
The Kronecker varieties $\mathcal{R}(p,q,r)$ are a particular case of \emph{quiver varieties}. More precisely, $\RR(p,q,r)$ is the representation variety of the Kronecker quiver :
\begin{figure}[!h] 
\begin{center}
\includegraphics[height=1.5cm]{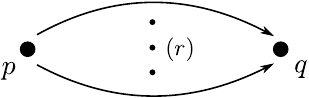}
\end{center}
\caption{The Kronecker quiver: $(r)$ indicates $r$ copies of the same oriented arrow while $p$ and $q$ indicate the dimensions of the vector spaces attached to the dots.} 
\label{f:kroneckerquiver}
\end{figure}
\end{rmk}

We will now apply the Hilbert--Mumford criterion (Theorem \ref{t:HilbertMumford}) to characterize (semi)stable points of $E(p,q,r)$. Let $\lambda: \C^* \to G_{p,q}$ be a $1$-parameter subgroup. Then we have
\[\lambda(e^t) = \left(\exp(tu), \exp(tv)\right)\]
where $u$ and $v$ are diagonalizable endomorphisms of $\C^p$ and $\C^q$ respectively, with integral eigenvalues. Let $m_1 > \cdots > m_k$ and $n_1 > \cdots >n_l$ denote the eigenvalues of $u$ and $v$ respectively, and let $F_i$ and $H_j$ denote the eigenspaces of $u$ and $v$ associated respectively to the eigenvalues $m_i$ and $n_j$, so that
\[\left\{ \begin{array}{l} 
u = m_1 \Id_{F_1} \oplus\cdots\oplus m_k \Id_{F_k} \\
v = n_1 \Id_{H_1}\oplus\cdots\oplus n_l\Id_{H_l}
\end{array} \right.\]
with respect to the decompositions $\C^p= F_1\oplus\cdots\oplus F_k$ and $\C^q=H_1 \oplus \cdots \oplus H_l$. We then define decreasing filtrations of $\C^p$ and $\C^q$ associated to $\lambda$.

\begin{defi}\label{d:filtrations}
The filtrations $(U_n)_{n \in \Z}$ and $(V_n)_{n\in \Z}$ associated to $\lambda$ are defined by
\[\left\{\begin{array}{lll}
U_n(\lambda) & = & \bigoplus_{m_i \geq n} F_i~, \\
V_n(\lambda) & = & \bigoplus_{n_j \geq n} H_j~.
\end{array}\right.\]
\end{defi}

\begin{prop} \label{p:muKronecker}
Let $\mathbf{A} = (A_1, \ldots, A_r)$ be a point in $E(p,q,r)$, $\lambda: \C^* \to G_{p,q}$ be a one parameter subgroup and $(U_n(\lambda))_{n\in\Z}$ and $(V_n(\lambda))_{n\in\Z}$ the associated filtrations. Then $\mu_{L_\chi}(\lambda, \mathbf{A})$ is finite if and only if $A_j(U_n(\lambda)) \subset V_n(\lambda)$ for all $n\in \Z$ and all $j\in \{1,\ldots, r\}$. In that case, we have
\[\mu_{L_\chi}(\lambda, \mathbf{A}) = \sum_{n\in \Z} p' \dim(V_n(\lambda)) - q' \dim(U_n(\lambda))~,\]
where $q'=\frac{q}{\gcd(p,q)}$ and $p'=\frac{p}{\gcd(p,q)}$.
\end{prop}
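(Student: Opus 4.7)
The strategy is to diagonalize $\lambda$, make the action of $\lambda(t)$ on $\mathbf{A}$ completely explicit in coordinates adapted to the eigenspace decompositions, determine when the limit $\lim_{t\to 0}\lambda(t)\cdot \mathbf{A}$ exists, and then read off the weight of the induced action on the fiber of $L_\chi$ at this limit from the character $\chi$.

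First I would fix the eigenspace decompositions $\C^p = F_1\oplus\cdots\oplus F_k$ and $\C^q = H_1\oplus\cdots\oplus H_l$ and write each $A_j$ as a block matrix $A_j = \sum_{i,a} A_j^{i,a}$ with $A_j^{i,a}\in \Hom(F_i,H_a)$. A direct computation gives
\[\lambda(t)\cdot A_j^{i,a} \;=\; \exp(tv)\,A_j^{i,a}\,\exp(-tu) \;=\; e^{t(n_a-m_i)}A_j^{i,a},\]
so, writing $t=e^s$ and letting $s\to -\infty$ (which corresponds to $t\to 0$ in the statement --- one verifies this convention is the one compatible with Definition of $\mu_L$ via the rescaling $v\mapsto t^{-m}v$), the limit $\mathbf{A}_0:=\lim_{t\to 0}\lambda(t)\cdot\mathbf{A}$ exists if and only if $A_j^{i,a}=0$ whenever $n_a<m_i$. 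In the language of the filtrations of Definition \ref{d:filtrations}, this is precisely the condition $A_j(U_n(\lambda))\subset V_n(\lambda)$ for every $n\in \Z$ and every $j$. When it holds, $\mathbf{A}_0$ retains only the diagonal blocks with $n_a=m_i$, and in particular $\mathbf{A}_0$ is fixed by $\lambda(\C^*)$.

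Assuming existence of the limit, the weight $\mu_{L_\chi}(\lambda,\mathbf{A})$ is computed on the fiber of $L_\chi$ over $\mathbf{A}_0$. Since the $G_{p,q}$-action on $L_\chi$ is, by definition, the twist of the trivial action by $\chi^{-1}$, the fiber scalar action is $\lambda(t)\cdot z = \chi(\lambda(t))^{-1}z$. Using $\det\exp(tu) = e^{t\,\mathrm{tr}(u)}$ and the definition $\chi(g_1,g_2)=\det(g_2)^{p'}\det(g_1)^{-q'}$, I get
\[\chi(\lambda(t))\;=\;t^{\,p'\,\mathrm{tr}(v)-q'\,\mathrm{tr}(u)},\]
so that $\lambda(t)\cdot z = t^{-m}z$ with
\[m \;=\; p'\,\mathrm{tr}(v)-q'\,\mathrm{tr}(u).\]

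The final step is purely combinatorial: translate $\mathrm{tr}(u)$ and $\mathrm{tr}(v)$ into sums over the dimensions of the filtration pieces. Since $\dim V_n-\dim V_{n+1}$ equals $\dim H_a$ when $n=n_a$ and vanishes otherwise, one has $\mathrm{tr}(v)=\sum_{n\in\Z} n\bigl(\dim V_n-\dim V_{n+1}\bigr)$, and similarly for $u$. Setting $f(n):=p'\dim V_n(\lambda)-q'\dim U_n(\lambda)$, which vanishes for $n\ll 0$ (the constant value there being $p'q-q'p=0$ by the definition of $p',q'$) and for $n\gg 0$, Abel summation yields
\[p'\,\mathrm{tr}(v)-q'\,\mathrm{tr}(u)\;=\;\sum_{n\in\Z} n\bigl(f(n)-f(n+1)\bigr)\;=\;\sum_{n\in\Z} f(n),\]
which is the desired formula. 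The only mildly delicate point --- and hence the main thing to get right --- is the bookkeeping in this last telescoping step: one must check carefully that $f$ has finite support so that the Abel summation is legitimate, and this is exactly where the specific exponents $p'$ and $q'$ appearing in $\chi$ play their role by ensuring cancellation at $n\ll 0$.
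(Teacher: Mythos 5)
Your proof is correct and follows essentially the same route as the paper's: diagonalize $\lambda$, read off the convergence of $\lambda(t)\cdot\mathbf{A}$ blockwise on $\Hom(F_i,H_a)$, compute the fiber weight as $p'\tr(v)-q'\tr(u)$ from the character $\chi$, and convert to the filtration sum by an Abel transform. The only cosmetic difference is that you package the telescoping step via the finitely supported function $f(n)=p'\dim V_n(\lambda)-q'\dim U_n(\lambda)$, correctly noting that the cancellation $p'q-q'p=0$ at $n\ll 0$ is what makes the sum legitimate, whereas the paper carries out the same Abel transform by explicit index manipulation.
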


\begin{rmk}
The sum is well-defined because for $n>\max(m_1,n_1)$ we have $\dim(V_n(\lambda)) = \dim(U_n(\lambda)) = 0$ and for $n\leq \min(m_k,n_l)$, we have $\dim(U_n(\lambda)) = p$ and $\dim(V_n(\lambda)) = q$, so $p' \dim(V_n(\lambda))=q' \dim(U_n(\lambda))$.
\end{rmk}

\begin{proof}[Proof of Proposition \ref{p:muKronecker}]
The following computation is a particular case of a more general result of King for quiver varieties \cite{King}. We give it here for completeness.

\smallskip
Writing the $A_j$ with respect to diagonalization bases for $u$ and $v$, one easily verifies that $\lambda(t) \cdot \mathbf{A}$ converges when $t$ goes to $0$ if and only if $A_j(U_n(\lambda)) \subset V_n(\lambda)$ for all $j\in\{1,\ldots,r\}$ and all $n\in \Z$. Otherwise, there is a vector $w$ in some $F_{m_a}$ and some $j\in \{1,\ldots , r\}$ such that $A_j(w)$ has a non-zero component in $H_{n_b}$ for some $n_b < m_a$. Then $\lambda(t)\cdot A_j (w)$ grows at least like $t^{(n_b-m_a)}$ when $t$ goes to~$0$.

If $\lambda(t) \cdot \mathbf{A}$ converges as $t\to 0$, then the action of $\lambda(t)$ on the fiber of $L_\chi$ at the limit is simply given by multiplication by $\chi^{-1}(\lambda(t))$ and we thus obtain
\begin{equation} \label{eq:Chi(lambda)}
\mu_{L_\chi}(\lambda, \mathbf{A}) = p' \sum_{i=1}^l n_i \dim(H_i) - q' \sum_{i=1}^k m_i \dim(F_i)~.
\end{equation}
Let us take as a convention that $m_0 = n_0 = b > \max(m_1,n_1)$ and $m_{k+1} = n_{l+1} = a < \min(m_k,n_l)$. Note that we have $\dim(F_i) = \dim U_{m_i}(\lambda) - \dim U_{m_{i-1}}(\lambda)$, $\dim(H_j) = \dim V_{n_j}(\lambda) - \dim V_{n_{j-1}}(\lambda)$ for $1\leq i \leq k$ and $1\leq j \leq l$. Applying an Abel transform to \eqref{eq:Chi(lambda)}, we get

\begin{eqnarray*}
\mu_{L_\chi}(\lambda, \mathbf{A}) &=& p' \sum_{j=0}^l (n_j-n_{j+1}) \dim(V_{n_j}(\lambda)) - q' \sum_{i=0}^k (m_i-m_{i+1}) \dim(U_{m_i}(\lambda)) \\
&& + p'n_{l+1} \dim(V_{n_l}(\lambda)) - q' m_{k+1} \dim(U_{m_k}(\lambda))\\
&=& p' \sum_{j=0}^l \sum_{n= n_{j+1}+1}^{n_j} \dim(V_{n_j}(\lambda)) -q' \sum_{i=0}^k \sum_{n= m_{i+1}+1}^{m_i} \dim(U_{m_i}(\lambda)) + a(p' q - q'p) \\
&=& p' \sum_{j=0}^l \sum_{n= n_{j+1}+1}^{n_j} \dim(V_n(\lambda)) -q' \sum_{i=0}^k \sum_{n= m_{i+1}+1}^{m_i} \dim(U_n(\lambda)) \\
&=& \sum_{n=a}^b p' \dim(V_n(\lambda)) - q' \dim(U_n(\lambda))~.
\end{eqnarray*} 

\end{proof}
Proposition \ref{p:muKronecker} easily implies King's characterization of semi-stable points in $E(p,q,r)$ :

\begin{theo}[King, \cite{King}] \label{t:KingStability}
A point $(A_1, \ldots, A_r) \in E(p,q,r)$ is semistable for the action of $G_{p,q}$ if and only if all  subspaces $U$ and $V$ of $\C^p$ and $\C^q$ such that $A_j(U) \subset V$ for all $j\in\{1,\ldots,r\}$ satisfy
\[\dim V \geq \frac{q}{p}\dim U ~.\]
It is stable if, moreover, the equality only holds when $U$ and $V$ are reduced to $\{0\}$ or when $U= \C^p$ and $V= \C^q$. 
\end{theo}

\begin{coro}[see \cite{EulerCharacteristicKronecker}]{\ \\} \label{c:ExistenceStablePoint}
\begin{itemize}
\vspace{-0.4cm}
\item If $\frac{p}{q}+ \frac{q}{p} \geq r$, then $\mathcal{R}(p,q,r)$ is empty except when $r=1$ or $2$ and $p=q$.

\item If $\frac{p}{q}+ \frac{q}{p} < r$, then $E(p,q,r)$ contains stable points and $\mathcal{R}(p,q,r)$ has positive dimension.

\item If $p$ and $q$ are coprime, then every semistable point in $E$ is stable.

\end{itemize}
\end{coro}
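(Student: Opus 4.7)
My plan for proving Corollary \ref{c:ExistenceStablePoint} is to combine King's stability criterion (Theorem \ref{t:KingStability}) with a dimension count on incidence subvarieties of $E(p,q,r)$.

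I would dispatch the third statement first, as it is essentially arithmetic. Suppose $\gcd(p,q) = 1$ and let $(U, V)$ be a non-trivial pair of subspaces with $A_j(U) \subset V$ for all $j$ that saturates King's inequality, i.e.\ $p \dim V = q \dim U$. Coprimality of $p$ and $q$ forces $p \mid \dim U$, so $\dim U \in \{0, p\}$, contradicting non-triviality. Hence every non-trivial invariant pair satisfies King's inequality strictly, and semistable tuples are automatically stable.

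For the second statement, I would produce stable tuples by counting dimensions. For each destabilizing dimension pair $(u,v)$ with $0 < u \leq p$, $0 \leq v < q$, $pv \leq qu$ and $(u,v) \neq (p,q)$, introduce the incidence variety
\[ \mathcal{I}_{u,v} = \bigl\{ (\mathbf{A}, U, V) \in E(p,q,r) \times \mathrm{Gr}_u(\C^p) \times \mathrm{Gr}_v(\C^q) : A_j(U) \subset V \text{ for all } j \bigr\}. \]
Projecting $\mathcal{I}_{u,v}$ onto $\mathrm{Gr}_u(\C^p) \times \mathrm{Gr}_v(\C^q)$ with fibres of dimension $r(pq - u(q-v))$ yields
\[ \dim \mathcal{I}_{u,v} = u(p-u) + v(q-v) + r\bigl(pq - u(q-v)\bigr). \]
Setting $\alpha = u/p$ and $\beta = v/q$, the inequality $\dim \mathcal{I}_{u,v} < \dim E(p,q,r) = rpq$ rearranges to
\[ r > \frac{p}{q} \cdot \frac{1-\alpha}{1-\beta} + \frac{q}{p} \cdot \frac{\beta}{\alpha}. \]
On the destabilizing region $0 < \beta \leq \alpha < 1$, the right-hand side is bounded by $\tfrac{p}{q} + \tfrac{q}{p}$, with this supremum attained only along the diagonal $\alpha = \beta$. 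Thus the hypothesis $r > \tfrac{p}{q} + \tfrac{q}{p}$ forces each $\mathcal{I}_{u,v}$ to project into a proper Zariski-closed subset of $E(p,q,r)$. The complement of the (finite) union of these projections is a non-empty Zariski open set of stable tuples, and the resulting GIT quotient has dimension $rpq - (p^2 + q^2 - 1) > 0$.

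The first statement is the most delicate. Running the inequality backwards, once $r \leq \tfrac{p}{q} + \tfrac{q}{p}$, the pairs $(u,v)$ with $u/p = v/q$ (when admissible in the integer lattice) give incidence varieties of dimension at least $\dim E(p,q,r)$, making it \emph{plausible} that every tuple admits a destabilizing pair. The main obstacle is to promote this dimensional inequality into honest surjectivity of the projection $\mathcal{I}_{u,v} \to E(p,q,r)$ for some such $(u,v)$, and to pin down the exceptional integer configurations where $r = \tfrac{p}{q} + \tfrac{q}{p}$ is attained and a lower-dimensional stratum of stable tuples can persist — namely $p=q$ with $r \in \{1,2\}$. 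I would carry out a case-by-case analysis on the integer pairs $(u,v)$ near the boundary $pv = qu$, constructing for each tuple $\mathbf{A}$ an explicit destabilizing pair out of the joint kernel and joint image filtration of $\mathbf{A}$ to rule out all non-exceptional cases.
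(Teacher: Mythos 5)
First, a point of reference: the paper gives no proof of this corollary at all --- it is quoted from the literature on Kronecker moduli --- so your argument has to stand on its own. Your last two bullets do stand. The coprimality argument for the third bullet is exactly right: a saturating pair $p\dim V=q\dim U$ with $\gcd(p,q)=1$ forces $\dim U\in\{0,p\}$, hence $(U,V)=(0,0)$ or $(\C^p,\C^q)$. For the second bullet, the incidence-variety count is a valid general-position argument: the fibre dimension $r\bigl(pq-u(q-v)\bigr)$ is correct, the projection $\mathcal{I}_{u,v}\to E(p,q,r)$ is proper (Grassmannians are compact) so its image is closed of dimension at most $\dim\mathcal{I}_{u,v}$, and on the relevant region $0<\beta\le\alpha$, $\beta<1$ one indeed has $\frac{p}{q}\cdot\frac{1-\alpha}{1-\beta}+\frac{q}{p}\cdot\frac{\beta}{\alpha}\le\frac{p}{q}+\frac{q}{p}$, with equality only on the diagonal; you should just record explicitly the boundary cases $v=0$ and $u=p$ (where the quantity is even smaller) and note that only pairs with $u>0$, $v<q$ can violate stability, so the division is legitimate. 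The dimension count $rpq-(p^2+q^2-1)>0$ is also correct.

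The genuine gap is the first bullet, and you have flagged it yourself: $\dim\mathcal{I}_{u,v}\ge\dim E(p,q,r)$ does not give surjectivity of the projection, and the deferred ``case-by-case analysis'' is the entire content of the claim. In fact the statement as printed cannot be proved, because it is false on the nose. Take $(p,q,r)=(3,1,3)$, so $\frac{p}{q}+\frac{q}{p}=\frac{10}{3}\ge 3$ and $p\ne q$. The triple $(e_1^*,e_2^*,e_3^*)\in E(3,1,3)$ has trivial common kernel, so any invariant pair with $U\ne 0$ forces $V=\C$ and $\dim V=1\ge\frac{1}{3}\dim U$, with equality only for $(U,V)=(\C^3,\C)$; this point is stable and $\mathcal{R}(3,1,3)$ is a single, non-empty point. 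The correct statement in the regime $p^2+q^2\ge rpq$ is that $\mathcal{R}(p,q,r)$ is at most zero-dimensional and is non-empty exactly when $(p,q)$ is a root of the $r$-arrow Kronecker quiver, i.e.\ $p^2+q^2-rpq\le 1$; the listed exceptions $r\in\{1,2\}$, $p=q$ (note that ``$s$'' in the corollary should read ``$r$'') are the isotropic cases, but real roots such as $(3,1)$ with $r=3$ also give non-empty moduli. Since the paper only ever uses the second and third bullets, none of this affects the main theorems, but your proposal should either drop the first bullet, correct its statement, or restrict it to the cases actually used.
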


The cases $r \leq 2$ and $p = q$ are special and are often excluded of the study of Kronecker varieties. However, they are relevant here. The following proposition deals with them.

\begin{prop}{\ } \label{p:R(p,p,2)}
\begin{itemize}
\item The Kronecker variety $\mathcal{R}(p,p,1)$ is reduced to a single point. There is no stable point. A point of $E(p,p,1) = \End(\C^p)$ is semistable if and only if it is invertible.

\item The Kronecker moduli space $\mathcal{R}(p,p,2)$ is isomorphic to $\ProjC{p}$. There is no stable point.
\end{itemize}
\end{prop}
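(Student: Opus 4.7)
The plan is to apply King's semistability criterion (Theorem~\ref{t:KingStability}) specialized to $p=q$, where the condition reduces to $\dim V \geq \dim U$. For $\mathcal{R}(p,p,1)$, a single $A \in \End(\C^p) = E(p,p,1)$ is semistable iff $\dim A(U) \geq \dim U$ for every proper $U$; since the reverse inequality is automatic, this forces $A$ to be injective, hence invertible. Every invertible matrix lies in the $G_{p,p}$-orbit of $I_p$ (apply $(g_1,g_2)=(A,I_p)$), so the semistable locus is a single orbit and $\mathcal{R}(p,p,1)$ is reduced to a point. Over $\C$, any eigenline $U=\C v$ of an invertible $A$ is a nontrivial invariant subspace with $\dim A(U) = \dim U$, which violates strict stability and thus produces no stable point.

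For the absence of stable points in $\mathcal{R}(p,p,2)$, given $(A_1,A_2)$, the pencil $\mathcal{A}(s,t)=sA_1+tA_2$ has determinant a binary form of degree at most $p$, hence always admits a zero $[s_0:t_0] \in \ProjC{1}$ (if $\det \mathcal{A} \equiv 0$, any $[s_0:t_0]$ works). Picking $v \neq 0$ in $\ker\mathcal{A}(s_0,t_0)$ forces $A_1v$ and $A_2v$ to be linearly dependent, so $U = \C v$ satisfies $\dim(A_1 U + A_2 U) \leq 1 = \dim U$, ruling out strict stability.

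For the isomorphism $\mathcal{R}(p,p,2) \cong \ProjC{p}$, the key observation is that $\det\mathcal{A}(s,t)$ transforms as $\chi(g_1,g_2)\det\mathcal{A}(s,t)$ with $\chi(g_1,g_2)=\det(g_2)\det(g_1)^{-1}$, so the $p+1$ coefficients of $\det\mathcal{A}(s,t)$ as a polynomial in $(s,t)$ are $G_{p,p}$-semi-invariants of weight $\chi$, i.e., $G_{p,p}$-invariant sections of $L_\chi$. They define a rational map $\psi \colon E(p,p,2) \to \mathbb{P}\bigl(\mathrm{Sym}^p(\C^2)^*\bigr) \cong \ProjC{p}$ sending $(A_1,A_2)$ to $[\det(sA_1+tA_2)]$, regular on the open set $\{\det \mathcal{A} \not\equiv 0\}$. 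I would then show: (a) semistability is equivalent to $\det \mathcal{A} \not\equiv 0$, using that a singular pencil (in the sense of Kronecker) admits a polynomial null vector $v(s,t) = \sum_{i=0}^d v_i s^{d-i} t^i$ of minimal degree, producing a destabilizing subspace $U = \mathrm{span}(v_0,\dots,v_d)$ against $V = \mathrm{span}(A_2 v_0,\dots,A_2 v_{d-1})$; (b) $\psi$ is surjective, realizing any product $\prod_i(\alpha_i s - \beta_i t)$ via diagonal pairs, with Jordan perturbations handling repeated roots; (c) the induced $\overline\psi \colon \mathcal{R}(p,p,2) \to \ProjC{p}$ is injective, by showing that polystable pairs with the same characteristic binary form are $G_{p,p}$-conjugate through their Kronecker canonical form. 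The main obstacle is item (a), and more generally the use of the Kronecker canonical form of matrix pencils to pin down GIT classes, since naive conjugacy arguments only handle the regular case where $\mathcal{A}(s_0,t_0)$ is invertible for some $[s_0:t_0]$.
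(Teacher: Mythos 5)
Your proposal is correct, and its centerpiece --- the map $(A_1,A_2)\mapsto[\det(sA_1+tA_2)]\in\mathbb{P}\left(\C_p[s,t]\right)\cong\ProjC{p}$ --- is exactly the map $\Phi$ used in the paper; like the paper, you leave surjectivity and injectivity on polystable classes as an outline (the paper dismisses them as ``a linear algebra exercise''). Your treatment of $\mathcal{R}(p,p,1)$ is the paper's verbatim. The genuine differences lie in the two supporting lemmas. First, for the equivalence ``semistable $\iff$ the pencil is regular'' (the paper's Lemma \ref{l:Semistability2Matrices}), the paper's hard direction is a compactness argument: it takes $u_n\in\ker(A+\tfrac1n B)$, extracts a limit $u\in\ker A$ inside $V=\mathrm{Span}(u_n)$, and observes $B(V)\subset A(V)$ with $A:V\to A(V)$ non-injective. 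You instead invoke Kronecker's theory of singular pencils, extracting a minimal-degree polynomial null vector $v(s,t)=\sum v_i s^{d-i}t^i$ and using $U=\mathrm{span}(v_0,\dots,v_d)$ against $V=\mathrm{span}(A_2v_0,\dots,A_2v_{d-1})$; this works (the recursion $A_1v_0=0$, $A_1v_j=-A_2v_{j-1}$, $A_2v_d=0$ gives the required invariance, and minimality of $d$ forces the $v_i$ to be independent), but the independence of the $v_i$ is itself a nontrivial classical fact that you would need to prove or cite, whereas the paper's limit argument is self-contained. Second, for the absence of stable points in $E(p,p,2)$, the paper exhibits a positive-dimensional stabilizer $(g,A_tgA_t^{-1})$ with $g$ commuting with $A_t^{-1}A_2$, while you produce a nontrivial proper pair $(U,V)=(\C v, A_1U+A_2U)$ with $\dim V\leq\dim U$ from a root of the (always non-surjective onto $\C^*$) binary form $\det(sA_1+tA_2)$; both are valid, yours via King's subspace criterion and the paper's via the finite-stabilizer definition. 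One cosmetic point: with the paper's sign convention for the linearization of $L_\chi$, you should double-check whether the coefficients of $\det(sA_1+tA_2)$ are invariant sections of $L_\chi$ or of $L_{\chi^{-1}}$; this does not affect the rational map to $\ProjC{p}$, but it matters if you want to phrase (a) as non-vanishing of invariant sections rather than via Hilbert--Mumford.
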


The proof will rely on the following linear algebra lemma:
\begin{lem} \label{l:Semistability2Matrices}
Let $A$ and $B$ be two endomorphisms of $\C^p$. Then $(A,B)$ is semistable in $E(p,p,2)$ if and only if there exists $t\in \R$ such that $A+tB$ is invertible.
\end{lem}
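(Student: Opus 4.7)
My plan is to reduce the statement to King's criterion (Theorem \ref{t:KingStability}) applied with $p = q$, $r = 2$: $(A,B)$ is semistable if and only if there is no pair of subspaces $(U,V) \neq (0,0),(\C^p,\C^p)$ with $A(U), B(U) \subset V$ and $\dim V < \dim U$. Since $\det(A+tB)$ is a polynomial in $t$ of degree at most $p$, it vanishes for all $t \in \R$ if and only if it vanishes identically as an element of $\C[t]$; so the lemma is equivalent to: $(A,B)$ is semistable iff $\det(A+tB) \not\equiv 0$.

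The direction ``some $A+tB$ invertible implies $(A,B)$ semistable'' is immediate: if $(U,V)$ were a destabilizing pair, then $(A+tB)(U) \subset V$ for this $t$ would force $\dim V \geq \dim U$ by injectivity of $A+tB$, a contradiction.

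For the converse, assuming $\det(A+tB) \equiv 0$, I construct a destabilizing pair. Viewing $A+tB$ as a matrix over $\C(t)$, its kernel is non-zero; clearing denominators yields a non-zero polynomial vector $v(t) = \sum_{i=0}^{k} t^i v_i \in \C[t]^p$ with $(A+tB)v(t) = 0$, which I choose of minimal degree $k$. Minimality forces $v_0 \neq 0$: otherwise $v(t)/t$ is a polynomial kernel vector of strictly lower degree. Expanding $(A+tB)v(t) = 0$ in powers of $t$ yields
\[A v_0 = 0, \quad A v_i = -B v_{i-1} \text{ for } 1 \leq i \leq k, \quad B v_k = 0.\]
Setting $U = \mathrm{Span}(v_0,\ldots,v_k)$ and $V = A(U)$, these relations give $B v_i = -A v_{i+1} \in V$ for $i<k$ and $B v_k = 0 \in V$, so $B(U) \subset V$; and since $v_0 \in U \cap \ker A \setminus \{0\}$, rank--nullity gives
\[\dim V = \dim U - \dim(U \cap \ker A) < \dim U.\]
This $(U,V)$ satisfies all the hypotheses of King's criterion for instability ($U \neq 0$ because $v_0 \neq 0$, and $V \neq \C^p$ because $\dim V < \dim U \leq p$), contradicting the assumed semistability of $(A,B)$.

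The main subtle point is ensuring that the minimal-degree polynomial kernel vector has $v_0 \neq 0$, because the final rank--nullity step requires a distinguished non-zero element of $U \cap \ker A$ produced by the construction itself. The divisibility-plus-minimality trick handles this cleanly; an alternative would be to invoke the Kronecker canonical form of matrix pencils, from which a destabilizing flag can be read off directly, but the polynomial-vector argument stays self-contained within a few lines.
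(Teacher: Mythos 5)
Your proof is correct, and while the easy direction coincides with the paper's, your argument for the converse takes a genuinely different route. The paper argues analytically: it picks non-zero vectors $u_n \in \ker(A+\tfrac1n B)$, normalizes and extracts a convergent subsequence $u_n \to u \neq 0$ with $Au=0$, and sets $V=\mathrm{Span}(u_n)$, $W=A(V)$; the relation $Bu_n=-nAu_n$ gives $B(V)\subset W$, and $u\in V\cap\ker A$ (using that $V$ is closed) forces $\dim W<\dim V$. You instead work algebraically over $\C[t]$: from $\det(A+tB)\equiv 0$ you extract a minimal-degree polynomial kernel vector $v(t)=\sum t^i v_i$, where minimality guarantees $v_0\neq 0$, and the coefficient identities $Av_0=0$, $Av_i=-Bv_{i-1}$, $Bv_k=0$ produce the destabilizing pair $U=\mathrm{Span}(v_0,\dots,v_k)$, $V=A(U)$. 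The two constructions are structurally parallel (a span of kernel-type vectors, its image under $A$, and a distinguished non-zero element of the span killed by $A$), but yours replaces the compactness/limit argument by the Kronecker minimal-index mechanism for singular pencils; this is arguably cleaner, makes explicit why real versus complex $t$ is immaterial (a polynomial of degree at most $p$ vanishing on $\R$ vanishes identically), and sidesteps the subsequence extraction at the cost of the small bookkeeping needed to ensure $v_0\neq 0$. Your remark about $U$ possibly being all of $\C^p$ is well taken: the destabilization test coming from the Hilbert--Mumford computation (Proposition \ref{p:muKronecker}) applies to arbitrary invariant pairs of subspaces, as the paper itself implicitly uses, so this causes no difficulty.
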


\begin{proof}
Assume first that $A+tB$ is invertible for some $t$. If $V$ and $W$ are subspaces of $\C^p$ such that $A(V)\subset W$ and $B(V)\subset W$, then $A+tB(V)\subset W$. Hence $\dim W \geq \dim V$, proving semistability.

Conversely, assume that $A+tB$ has a kernel for all $t$. For all $n\in \N_{>0}$, let $u_n$ be a non-zero vector in the kernel of $A+\frac{1}{n}B$. Up to scaling $u_n$ and extracting a subsequence, we can assume that $u_n$ converges to a non-zero vector $u$ in the kernel of $A$. Set $V = \mathrm{Span}(u_n)_{n\in \N}$ and $W = A(V)$. Since $u_n$ is in the kernel of $A+\frac{1}{n}B$, $Bu_n$ is colinear to $A u_n$ for all $n$, hence $B(V)\subset W$. Finally, since $V$ is closed (as a linear subspace in finite dimension), it contains the vector $u$. Thus $A : V\to W$ is not injective and $\dim W < \dim V$, contradicting semistability.
\end{proof}

\begin{proof}[Proof of Proposition \ref{p:R(p,p,2)}]{\ \\}
\vspace{-0.4cm}
\begin{itemize}
\item  Case $r=1, p=q$.\\
If $A \in \End(\C^p)$ is not invertible, then the dimension of the image of $A$ is strictly less than $p$, which contradicts stability. If $A$ is invertible, then $\dim A(U) = \dim U$ for every subspace $U$ of $\C^p$, hence $A$ is semistable and not stable. Since any invertible matrice is similar to the identity, there is a unique semistable orbit. Hence $\mathcal{R}(1,p,p)$ is reduced to a point.

\smallskip
\item Case $r=2, p=q$.\\
By Lemma \ref{l:Semistability2Matrices}, there is a well-defined map
\[\function{\Phi}{E(p,p,2)^{ss}}{\mathbf{P}(\C_p[X,Y])}{(A_1,A_2)}{[\det(XA_1+YA_2)]}~,\]
where $\C_p[X,Y]$ is the space of homogeneous polynomials of degree $p$ in two variables. The map $\Phi$ is invariant under the action of $G_{p,q}$. It is a linear algebra exercise to verify that $\Phi$ is surjective and $\Phi(\mathbf A) = \Phi(\mathbf B)$ if and only if the closures of the $G_{p,q}$-orbits of $\mathbf A$ and $\mathbf B$ intersect. The map $\Phi$ thus induces the required isomorphism.

Finally, if $(A_1,A_2)$ is a semistable point, let $t$ be such that $A_t \equaldef A_1 + tA_2$ is invertible and let $g$ be an invertible matrix commuting with $A_t^{-1}A_2$. Then $(g,A_t g A_t^{-1}) \in G_{p,p}$ fixes $(A_1,A_2)$. Thus $(A_1,A_2)$ is not stable.
\end{itemize}
\end{proof}

\subsection{Flag configurations} \label{ss:FlagConfigurations}

Another classical application of GIT is the construction of configuration spaces of flags (see for instance \cite{FalbelMaculanSarfatti}).

Recall that a \emph{complete flag} $F$ in $\C^p$ is a nested sequence of linear subspaces 
\[\C^p= F_0 \supset F_1\supset \cdots \supset F_p = \{0\}~,\]
where $\dim F_i = p-i$. We denote by $\mathcal{F}(\C^p)$ the set of complete flags in $\C^p$.

\begin{rmk}
All the content of this subsection and the next one would easily extend to \emph{partial flags} (\textit{i.e.} decreasing sequences of subspaces). We chose to restrict to complete flags to avoid proliferation of indices. For the rest of this section, ``flag'' will mean ``complete flag''.
\end{rmk}

Let $\Gr_i(\C^p)$ denote the Grassmanian of $i$-dimensional subspaces of $\C^p$, and define
\[\function{\iota_i}{\FF(\C^p)}{\Gr_{p-i}(\C^p)}{(F_j)_{0\leq j\leq p}}{F_i}~.\]
Then the map $\iota = (\iota_1,\ldots, \iota_{p-1})$ embeds $\FF(\C^p)$ into $\prod_{i=1}^{p-1} \Gr_{p-i}(\C^p)$. 

Denote by $\OO_i(a)$ the pullback to $\Gr_i(\C^p)$ of the unique line bundle of degree $a$ on $\mathbf P\left(\bigwedge^i\C^p\right)$ by the Pl\"ucker embedding. The following is classical.

\begin{prop}
Every line bundle over $\mathcal{F}(\C^p)$ is isomorphic to
\[\OO(a_1,\ldots, a_{p-1}) \equaldef \bigotimes_{i=1}^{p-1} \iota_i^*\OO_{p-i}(a_i)\] for some $(a_1,\ldots, a_{p-1})\in\Z^{p-1}$. This bundle is ample if and only if all the $a_i$ are positive.
\end{prop}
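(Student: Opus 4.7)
The strategy has two stages: first compute the Picard group $\mathrm{Pic}(\FF(\C^p))$, then characterize ampleness via intersection with explicit rational curves.

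\textbf{Stage 1 (structure of $\mathrm{Pic}(\FF(\C^p))$).}  I would exhibit $\FF(\C^p)$ as an iterated projective bundle over a point. Sending a complete flag $(F_j)$ to its last nontrivial step $F_{p-1}\subset \C^p$ defines a surjection $\pi : \FF(\C^p)\to \ProjC{p-1}$, whose fiber over $F_{p-1}$ is the flag variety $\FF(\C^p/F_{p-1})\cong\FF(\C^{p-1})$; more precisely $\pi$ is Zariski-locally trivial, and in fact $\FF(\C^p)$ is a $\ProjC{p-2}$-bundle over $\FF(\C^{p-1})$ via the forgetful map in the opposite direction. By the projective-bundle formula
\[\mathrm{Pic}(\FF(\C^p)) \;\cong\; \mathrm{Pic}(\FF(\C^{p-1}))\oplus\Z,\]
and induction gives $\mathrm{Pic}(\FF(\C^p))\cong\Z^{p-1}$. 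It remains to check that the $p-1$ Pl\"ucker classes $\iota_i^*\OO_{p-i}(1)$ form a $\Z$-basis; this is done in the next step by producing a dual basis of curves.

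\textbf{Stage 2 (curves and the ampleness criterion).}  For each $j\in\{1,\dots,p-1\}$, fix a full flag $F_\bullet$ and deform only its $j$-th step: since $F_{j+1}\subset F_{j-1}$ with $\dim F_{j-1}/F_{j+1}=2$, the set of admissible $F_j$ is parametrized by $\mathbf P(F_{j-1}/F_{j+1})\cong\ProjC{1}$. This gives an embedded \emph{Schubert line} $C_j\hookrightarrow\FF(\C^p)$. A direct wedge-product computation shows
\[\deg\bigl(\iota_i^*\OO_{p-i}(1)\bigr|_{C_j}\bigr)=\delta_{ij},\]
since on $C_j$ only the $(p-j)$-th exterior power changes and does so linearly. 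This proves simultaneously that $\{\iota_i^*\OO_{p-i}(1)\}$ is a basis of $\mathrm{Pic}(\FF(\C^p))$ and that
\[\OO(a_1,\dots,a_{p-1})\cdot C_j = a_j.\]
Hence if $a_j\leq 0$ for some $j$, the line bundle has non-positive degree on $C_j$ and cannot be ample. Conversely, if all $a_i>0$, the external tensor product $\boxtimes_i \OO_{p-i}(a_i)$ is ample on $\prod_i\Gr_{p-i}(\C^p)$ (being a tensor product of pullbacks along the projections of ample bundles, each separating points and tangent vectors in its factor). Since $\iota$ is a closed immersion, its pullback $\OO(a_1,\dots,a_{p-1})$ is ample.

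\textbf{Main obstacle.}  The one step that is not purely formal is the intersection calculation $\iota_i^*\OO_{p-i}(1)\cdot C_j=\delta_{ij}$. One has to verify carefully that along $C_j$, the Pl\"ucker coordinates of $F_i$ are constant for $i\neq j$ and vary as a degree-one map to $\mathbf P(\bigwedge^{p-j}\C^p)$ for $i=j$. Once this is in hand, both the identification of the Picard group and the ``only if'' direction of ampleness follow at once.
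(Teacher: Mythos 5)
The paper offers no proof of this proposition---it is stated as ``classical''---so there is nothing to compare your argument against; judged on its own, your sketch is the standard one and is essentially correct. Stage 2 is clean: the Schubert lines $C_j$ obtained by moving only the $j$-th step inside $\mathbf P(F_{j-1}/F_{j+1})\cong\ProjC{1}$ do satisfy $\deg\bigl(\iota_i^*\OO_{p-i}(1)|_{C_j}\bigr)=\delta_{ij}$ (for $i\neq j$ the step $F_i$ is constant, and for $i=j$ the Pl\"ucker map $[\bar v]\mapsto[\omega\wedge v]$, with $\omega$ a generator of $\bigwedge^{p-j-1}F_{j+1}$, is a linear embedding of $\ProjC{1}$). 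Together with the rank computation this gives a surjection $\mathrm{Pic}\to\Z^{p-1}$ hitting the standard basis, hence the Pl\"ucker classes form a $\Z$-basis; the ``only if'' direction of ampleness follows since an ample bundle has positive degree on every curve, and the ``if'' direction from ampleness of external tensor products on the product of Grassmannians restricted along the closed immersion $\iota$. The one imprecision is in Stage 1: $\FF(\C^p)$ is \emph{not} a $\ProjC{p-2}$-bundle over $\FF(\C^{p-1})$ (forgetting any single step of a complete flag is always a $\ProjC{1}$-bundle over a partial flag variety of $\C^p$, not over a flag variety of a smaller space), and the fibration $F_\bullet\mapsto F_{p-1}$ over $\ProjC{p-1}$ has fiber $\FF(\C^p/F_{p-1})$ but is not itself a projective bundle. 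The correct version of your induction runs through the tower of partial flag varieties of $\C^p$, dropping one step at a time, each stage being a projective bundle; this still yields $\mathrm{Pic}(\FF(\C^p))\cong\Z^{p-1}$ by the projective-bundle formula, so the slip does not affect the conclusion, but as written the intermediate spaces are misidentified.
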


The group $\GL(p,\C)$ acts on each $\Gr_i(\C^p)$ with kernel $\C^* \Id_p$. There is a canonical lift of this action to the total space of $\OO_i(1)$, such that $\lambda \Id_p$ acts by multiplication by $\lambda^{-i}$ in each fiber. This action, however, is not a $\GL(p,\C)$ linearization according to Definition \ref{d:GLinearization} since it does not have $\C^*\Id_p$ in its kernel. One can nonetheless linearize the action on $\OO_i(p)$. More precisely, if we simply denote by $g(v)$ the action of $g$ on a point $v\in \OO_i(p)$ induced by the canonical action of $\GL(p,\C)$ on $\OO_i(1)$, then we get a new action by setting
\[g\cdot v = \det(g)^i~g(v)\]
with kernel $\C^*\Id_p$, thus providing $\OO_i(p)$ with a $\GL(p,\C)$-linearization. The computation of the Hilbert--Mumford weights in this setting is classical.

\begin{prop} \label{p:MuGrassmanian}
Let $\lambda$ be a $1$-parameter subgroup of $\GL(p,\C)$ and let $U_n(\lambda)$ be the filtration defined in Definition \ref{d:filtrations}.
For $F$ in $\Gr_i(\C^p)$, we have
\[\mu_{\OO_i(p)}(\lambda, F) = \sum_{n\in \Z} i \dim(U_n(\lambda)) - p \dim(U_n(\lambda)\cap F)~.\]
\end{prop}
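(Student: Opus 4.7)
\smallskip

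The proof will follow the same template as the calculation already carried out for Proposition \ref{p:muKronecker}: identify the limit of $\lambda(t)\cdot F$ as $t\to 0$, extract the weight of the action of $\lambda(\C^*)$ on the fiber of $\OO_i(p)$ at that limit, and finally convert the resulting expression into the advertised filtration form by an Abel summation. The natural tool is the Pl\"ucker embedding $\FF(\C^p)\hookrightarrow\mathbf P(\Lambda^i\C^p)$, which reduces everything to a computation in a projective space on which the canonical $\GL(p,\C)$ action is well understood.

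Concretely, the plan is as follows. Fix a basis of $\C^p$ consisting of eigenvectors of the infinitesimal generator $u$ of $\lambda$, with eigenvalues $m_1>\cdots>m_k$ and eigenspaces $F_1,\ldots,F_k$. For a subspace $F\in\Gr_i(\C^p)$ represented by a decomposable multivector $\xi = w_1\wedge\cdots\wedge w_i\in\Lambda^i\C^p$, expand $\xi$ in the induced basis of $\Lambda^i\C^p$; each basis element is an eigenvector of $\lambda(t)$ with some weight $w(I)=\sum_{k\in I}m_k$. The limit $F_0=\lim_{t\to 0}\lambda(t)\cdot F$ corresponds, after rescaling by the extremal factor $t^{w_{\min}}$, to the sum of those components realising the extremal weight $w_{\min}$ among the nonzero Pl\"ucker coordinates of $F$. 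One then checks, using the interpretation of $\OO_i(1)$ as the dual of the tautological line bundle, that the canonical action of $\lambda(\C^*)$ on the fiber $\OO_i(1)_{F_0}$ is by the corresponding character, so that its contribution on $\OO_i(p)$ is $p$ times this weight. Adding the twist by $\det(\lambda(t))^i=t^{i\,\mathrm{tr}(u)}$ coming from the linearization produces the Hilbert--Mumford weight as a combination of $\mathrm{tr}(u)=\sum_k m_k\dim F_k$ and of the extremal Pl\"ucker weight attached to $F$.

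The last step, and the point where some care is needed, is to recognise the extremal Pl\"ucker weight as $\tilde m_1+\cdots+\tilde m_i$, where $\tilde m_1\ge\cdots\ge\tilde m_i$ are the jumps of $n\mapsto\dim(U_n(\lambda)\cap F)$. This is a standard fact about the interaction of filtrations and exterior powers: an adapted basis of $F$ with respect to the filtration $\{U_n(\lambda)\cap F\}$ realises precisely the dominant term of $\xi$ under $\lambda(t)$ as $t\to 0$. Once this identification is in place, the same Abel summation as in the proof of Proposition \ref{p:muKronecker}, applied to $\sum_n\dim U_n(\lambda)$ and $\sum_n\dim(U_n(\lambda)\cap F)$ on a large enough window $[a,b]$ (with boundary contributions cancelling thanks to the factor $i\dim U_n-p\dim(U_n\cap F)$ vanishing on the tails), yields the stated closed form for $\mu_{\OO_i(p)}(\lambda,F)$. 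The main conceptual obstacle is the bookkeeping of signs and of the twist by $\det^i$; apart from that the calculation is formally identical to the Kronecker case and does not involve any new idea.
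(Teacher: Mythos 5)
The paper states this proposition without proof (it is quoted as ``classical''), so there is no argument of the authors' to compare yours to. Your route --- Pl\"ucker embedding, identification of the limit $F_0$ with the sum of the minimal-weight Pl\"ucker components, reading off the character by which $\lambda(\C^*)$ acts on the fiber over $F_0$, the adapted-basis lemma identifying the extremal Pl\"ucker weight of $F$ with the sum $\tilde m_1+\cdots+\tilde m_i$ of the induced filtration weights, and a final Abel summation --- is the standard proof and essentially the only one. The adapted-basis lemma is indeed the right key fact, and your handling of the tails in the Abel summation (the summand $i\dim U_n - p\dim(U_n\cap F)$ vanishing for $n\ll 0$ and $n\gg 0$) is correct.

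The step you defer, ``the bookkeeping of signs and of the twist by $\det^i$'', is however not a routine verification, and as written your outline does not close. Carry it out literally against the conventions stated in the paper. The canonical action on $\OO_i(1)$ is the one on $\det(S^*)$, $S$ the tautological subbundle (this is what ``$\lambda\Id_p$ acts by $\lambda^{-i}$'' forces); at the fixed point $F_0$ the group $\lambda(\C^*)$ acts on $\det S_{F_0}$ by $t^{w_{\min}}$ with $w_{\min}=\sum_j\tilde m_j$, hence on $\OO_i(p)_{F_0}$ by $t^{-p\,w_{\min}}$, and after the twist by $\det(\lambda(t))^i=t^{i\tr(u)}$ the total action is by $t^{\,i\tr(u)-p\,w_{\min}}$. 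The paper's definition $\lambda(t)\cdot v=t^{-m}v$ then returns
\[m \;=\; p\,w_{\min}-i\,\tr(u)\;=\;\sum_{n\in\Z}\bigl(p\dim(U_n(\lambda)\cap F)-i\dim U_n(\lambda)\bigr),\]
which is the \emph{negative} of the stated formula. The stated formula is nevertheless the operationally correct one: for $p=2$, $i=1$ and $s$ points of $\ProjC{1}$ with equal weights it is the choice for which ``$\mu\geq 0$ for all $\lambda$'' reproduces the classical condition that no point be repeated more than $s/2$ times, whereas the opposite sign declares every configuration unstable; it is also the sign needed for the comparison with parabolic stability later in the paper. So to finish your proof you must fix the convention --- e.g.\ take $\mu$ to be the weight itself, $\lambda(t)\cdot v=t^{+\mu}v$, or equivalently compute with $\det S$ rather than its dual --- and justify that choice against the Hilbert--Mumford criterion as it is used here. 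Your remark that the computation is ``formally identical to the Kronecker case'' is precisely what hides the problem: in Proposition \ref{p:muKronecker} the sign in the definition of $\mu$ is compensated by the $\chi^{-1}$ appearing in the linearization of $L_\chi$, and there is no analogous compensating sign in the Grassmannian setting, so the two computations, performed with the same literal conventions, land on opposite sides.
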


Now, let $s$ be a positive integer. We denote respectively by $\FF^s$ and $\HH^s$ the spaces $\FF(\C^p)^s$ and $\FF(\C^q)^s$.
The group $G_{p,q} = \GL(p,\C)\times \GL(q,\C)$ acts on $\FF^s \times \HH^s$ with kernel $\C^*\Id_p\times \C^*\Id_q$. One can endow $\FF^s\times \HH^s$ with many $G_{p,q}$-linearized ample line bundles via its embedding into
\[\Pi \equaldef \prod_{j=1}^s \left(\prod_{i=1}^{p-1} \Gr_{p-i}(\C^p) \times \prod_{i=1}^{q-1} \Gr_{q-i}(\C^q)\right)~.\]

More precisely, for any family of positive integers $\mathbf{a} = (a_i^j)_{1\leq j \leq s, 1\leq i \leq p-1}$ and $\mathbf{b} = (b_i^j)_{1\leq k \leq s, 1\leq i \leq q-1}$,
define the $G_{p,q}$-linearized line bundle $\OO(p \mathbf{a}, q \mathbf{b})$ as the restriction to $\FF^s \times \HH^s$ of the line bundle
\[\bigotimes_{j=1}^s \left(\pi_j^* \OO(pa_1^j,\ldots, pa_{p-1}^j) \otimes {\pi'_j}^*\OO(qb_1^j,\ldots, qb_{q-1}^j)\right)~,\]
where $\pi_j$ and $\pi'_j$ denote respectively the projections of $\Pi$ to the $j$-th factors $\FF(\C^p)$ and $\HH(\C^q)$. Then the actions of $\GL(p,\C)$ and $\GL(q,\C)$ on $\OO_{p-i}(p)$ and $\OO_{q-i}(q)$ described above induce a $G_{p,q}$-linearization of $\OO(p\mathbf a, q\mathbf b)$.

\smallskip
For $(\mathbf F,\mathbf H)= ((F_i^j), (H_i^j))\in \FF^s \times \HH^s$, and $\lambda = (\lambda_1,\lambda_2): \C^* \to G_{p,q}$ a $1$-parameter subgroup, write $\mu_{\mathbf{a}, \mathbf{b}}(\lambda,(\mathbf F,\mathbf H))$ for $\mu_{\OO(p\mathbf a,q\mathbf b)}(\lambda, (\mathbf F,\mathbf H))$. By Proposition \ref{d:GLinearization}, we have
\begin{equation} \label{eq:MuFlagConfiguration}
\mu_{\mathbf{a}, \mathbf{b}}(\lambda,(\mathbf F,\mathbf H)) = \sum_{j=1}^s \left(\sum_{i=1}^{p-1} a_i^j \mu_{\OO_{p-i}(p)}(\lambda_1, F_i^j) + \sum_{i=1}^{q-1} \mu_{\OO_{q-i}(q)}(\lambda_2, H_i^j)\right)~.
\end{equation}
We want to express this formula in terms of ``induced weights'' on the filtration associated to $\lambda$. In order to do so, let us choose $(\eta_i^j)_{1\leq j \leq s, 1\leq i \leq p}$ and $(\zeta_i^j)_{1\leq j \leq s, 1\leq i \leq q}$ such that
\[\eta_{i+1}^j - \eta_i^j = a_i^j\]
and
\[\zeta_{i+1}^j - \zeta_i^j = b_i^j\]
for all $i,j$. We see $\eta^j = (\eta_i^j)_{1\leq i \leq p}$ (resp. $\zeta^j = (\zeta_i^j)_{1\leq i \leq q}$) as weights attached to the flag $F^j = (F_i^j)_{0\leq i \leq p}$ (resp. $H^j = (H_i^j)_{0\leq i \leq q}$). If $U$ (resp. $V$) is a subspace of $\C^p$ (resp. $\C^q$), we denote by $\vert\eta^j(U \cap F^j)\vert$ (resp. $\vert\zeta^j(V \cap H^j)\vert$) the sum of the weights of the weighted flag induced on $U$ (resp. $V$) by $(F^j,\eta^j)$ (resp. $(H^j, \zeta^j)$), \emph{i.e.}
\[\vert\eta^j(U\cap F^j)\vert = \sum_{i=1}^{p} \eta_i^j \left( \dim U\cap F_{i-1}^j - \dim U \cap F_i^j\right)\]
and
\[\vert\zeta^j(V \cap H^j)\vert = \sum_{i=1}^{q} \zeta_i^j \left( \dim V\cap H_{i-1}^j - \dim V \cap H_i^j\right)~.\]
Finally, we set
\[\vert \eta(U\cap \mathbf F)\vert = \sum_{j=1}^s \vert\eta^j(U \cap F^j)\vert~,\]
\[\vert \zeta (V\cap \mathbf H)\vert = \sum_{j=1}^s \vert\zeta^j(V \cap H^j)\vert~,\]
\[\norm{\eta} = \sum_{j=1}^s \sum_{i=1}^{p} \eta_i^j  = \vert \eta(\C^p\cap \mathbf F)\vert\]
and 
\[\norm{\zeta} = \sum_{j=1}^s \sum_{i=1}^{q} \zeta_i^j =\vert \zeta(\C^q\cap \mathbf H)\vert~.\]

\begin{prop} \label{mu:FlagConfiguration}
For $(\mathbf F,\mathbf H) \in \FF^s \times \HH^s$ and $\lambda : \C^* \to G_{p,q}$ a $1$-parameter subgroup, we have
\begin{eqnarray*}
\mu_{\mathbf{a}, \mathbf{b}}(\lambda, (\mathbf F,\mathbf H)) &=& \sum_{n\in \Z} \big( \norm{\eta} \dim(U_n(\lambda)) - p \vert \eta(U_n(\lambda)\cap \mathbf F)\vert\\
&&\hphantom{\sum_{n\in \Z}} + \norm{\zeta} \dim(V_n(\lambda)) - q \vert \zeta (V_n(\lambda)\cap \mathbf H)\vert \big)~.
\end{eqnarray*}
\end{prop}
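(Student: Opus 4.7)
The strategy is to combine the two preceding inputs — equation \eqref{eq:MuFlagConfiguration} (which splits $\mu_{\mathbf a,\mathbf b}$ over the factors of $\FF^s\times\HH^s$) and Proposition \ref{p:MuGrassmanian} (which expresses each factor's weight in terms of $\dim U_n(\lambda)$ and $\dim(U_n(\lambda)\cap F_i^j)$) — and then perform summation by parts in the index $i$ of the flag. The Abel transform is exactly what turns the gap weights $a_i^j = \eta_{i+1}^j - \eta_i^j$ into the cumulative weights $\eta_i^j$ that appear in $\vert\eta^j(U_n(\lambda)\cap F^j)\vert$.

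\emph{Step 1.} Apply Proposition \ref{p:MuGrassmanian} to each term on the right-hand side of \eqref{eq:MuFlagConfiguration}, writing
\[
\mu_{\OO_{p-i}(p)}(\lambda_1,F_i^j) = \sum_{n\in\Z}\bigl[(p-i)\dim U_n(\lambda) - p\dim(U_n(\lambda)\cap F_i^j)\bigr],
\]
and analogously on the $\HH$-side using $V_n(\lambda)$. Since both $\norm{\eta}$, $\vert\eta(U_n\cap\mathbf F)\vert$ and $\norm{\zeta}$, $\vert\zeta(V_n\cap\mathbf H)\vert$ split as sums over $j$, it suffices to prove, for each fixed $j$ and $n$, the identity
\[
\sum_{i=1}^{p-1}(\eta_{i+1}^j-\eta_i^j)\bigl[(p-i)\dim U_n(\lambda) - p\dim(U_n(\lambda)\cap F_i^j)\bigr] = \norm{\eta^j}\dim U_n(\lambda) - p\,\vert\eta^j(U_n(\lambda)\cap F^j)\vert,
\]
together with its analogue on the $\HH$-side, and then sum over $j$ and $n$.

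\emph{Step 2.} I verify the displayed identity by Abel summation, using the boundary conditions $F_0^j=\C^p$ and $F_p^j=\{0\}$. On one hand, a direct computation gives
\[
\vert\eta^j(U_n\cap F^j)\vert = \eta_1^j\dim U_n(\lambda) + \sum_{i=1}^{p-1}(\eta_{i+1}^j-\eta_i^j)\dim(U_n(\lambda)\cap F_i^j).
\]
On the other hand, writing $\eta_i^j = \eta_1^j + \sum_{k=1}^{i-1}a_k^j$ and summing yields
\[
\norm{\eta^j} = p\eta_1^j + \sum_{i=1}^{p-1}(p-i)(\eta_{i+1}^j-\eta_i^j).
\]
Subtracting $p$ times the first identity from $\dim U_n(\lambda)$ times the second, the free constant $\eta_1^j$ cancels (reflecting the fact that the left-hand side depends only on the differences $a_i^j$), and the right-hand side collapses precisely to the desired expression. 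The $\HH$-side argument is identical with $(\C^p,p,\eta,\mathbf F)$ replaced by $(\C^q,q,\zeta,\mathbf H)$; adding the two contributions and interchanging the order of summation over $n$, $j$ and $i$ yields the proposition.

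\emph{Main obstacle.} There is no deep obstacle here: once Propositions \ref{p:MuProduct} and \ref{p:MuGrassmanian} are granted, the statement is a purely combinatorial reshuffling. The only point requiring some care is the Abel summation of Step 2, in particular to ensure that the arbitrary additive normalization in the definition of $\eta^j$ (and $\zeta^j$) from the $a_i^j$ (and $b_i^j$) drops out, as the formula to be proved is manifestly invariant under such a shift.
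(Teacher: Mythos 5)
Your proof is correct and follows essentially the same route as the paper: substitute Proposition \ref{p:MuGrassmanian} into \eqref{eq:MuFlagConfiguration} and apply an Abel transform in the flag index, using $a_i^j=\eta_{i+1}^j-\eta_i^j$ and the boundary conditions $F_0^j=\C^p$, $F_p^j=\{0\}$. Your Step 2 simply makes explicit the summation by parts that the paper leaves to the reader, and the cancellation of the normalization constant $\eta_1^j$ is verified correctly.
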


\begin{proof}
Replacing $\mu(\lambda, F_i^j)$ and $\mu(\lambda, H_i^j)$ in \eqref{eq:MuFlagConfiguration} by the formula given in Proposition \ref{p:MuGrassmanian}, we obtain
\begin{eqnarray*}
\mu_{\mathbf{a}, \mathbf{b}}(\lambda, (\mathbf F,\mathbf H)) &=& \sum_{n\in \Z} \sum_{j=1}^s \left(\sum_{i=1}^{p-1} a_i^j (p-i) \dim U_n(\lambda) - p a_i^j \dim U_n(\lambda)\cap F_i^j\right. \\
&& + \left. \sum_{i=1}^{q-1} b_i^j (q-i) \dim V_n(\lambda) - q b_i^j \dim V_n(\lambda) \cap H_i^j\right)~.
\end{eqnarray*}

Writing $a_i^j = \eta_{i+1}^j - \eta_i^j$, $b_i^j = \zeta_{i+1}^j - \zeta_i^j$ and applying an Abel transform, one obtains the result.
\end{proof}

\subsection{Feathered Kronecker varieties}
\label{ss:FeatheredKroneckerVarieties}
Let us define
\[E_\spadesuit(p,q,r,s) = E(p,q,r) \times \FF^s \times \HH^s~.\]
In this section, we consider GIT quotients of $E_\spadesuit(p,q,r,s)$ with different choices of $G_{p,q}$-linearized line bundles.

\smallskip
Let $\eta = (\eta_i^j)_{1\leq j \leq s, 1\leq i \leq p}$ and $\zeta = (\zeta_i^j)_{1\leq j \leq s, 1\leq i \leq q}$ be tuples of \emph{real} numbers such that $\eta_1^j <\cdots < \eta_p^j$ and $\zeta_1^j <\cdots < \zeta_q^j$ for all $j$. We first define a numerical stability condition on $E_\spadesuit(p,q,r,s)$ depending on $\eta$ and $\zeta$. We will then prove that this stability condition is equivalent to Mumford's stability for a suitable choice of ample $G_{p,q}$-linearized line bundle.

\begin{defi} \label{d:(epsilon,eta)stability}
A point $(\mathbf{A},\mathbf F,\mathbf H) \in E_\spadesuit(p,q,r,s)$ is \emph{$(\eta, \zeta)$-semistable} if, for every subspaces $U$ and $V$ of $\C^p$ and $\C^q$ such that $A_i(U) \subset V$ for all $i \in \{1,\ldots, r\}$, we have
\[\mu_{\eta,\zeta}(U,V) \equaldef \left(\frac{\norm{\eta}}{p}-q\right)\dim U - \vert \eta(U\cap \mathbf F)\vert + \left(\frac{\norm{\zeta}}{q} + p\right)\dim V - \vert \zeta(V\cap \mathbf H)\vert \geq 0~.\]
It is \emph{$(\eta,\zeta)$-stable} if, moreover, equality only holds for $U = V= \{0\}$ or $U=\C^p$ and $V=\C^q$. The \emph{feathered Kronecker variety} $\mathcal{R}_\spadesuit(p,q,r,s,\eta, \zeta)$ is the largest Hausdorff quotient of the set of $(\eta,\zeta)$-semistable points of $E_\spadesuit(p,q,r,s)$ under the action of $G_{p,q}$.
\end{defi}

The following proposition asserts that these spaces are indeed GIT quotients of $E_\spadesuit(p,q,r,s)$. In particular, $\mathcal{R}_\spadesuit(p,q,r,s,\eta, \zeta)$ is homeomorphic to a projective variety.

\begin{prop}
There exists an ample $G_{p,q}$-linearized line bundle $L$ on $E_\spadesuit(p,q,r,s)$ (depending on $\eta$ and $\zeta$) such that $L$-semistable (resp. stable) points are exactly $(\eta,\zeta)$-semistable (resp. stable) points. 
\end{prop}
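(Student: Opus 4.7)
The plan is to realize $\mathcal{R}_\spadesuit(p,q,r,s,\eta,\zeta)$ as a GIT quotient by constructing an ample $G_{p,q}$-linearized line bundle $L$ on $E_\spadesuit(p,q,r,s)$ whose Hilbert--Mumford (semi)stability coincides with $(\eta,\zeta)$-(semi)stability, and then invoking the Hilbert--Mumford criterion (Theorem \ref{t:HilbertMumford}). The two main ingredients are the additivity of Hilbert--Mumford weights across product factors (Proposition \ref{p:MuProduct}) and the explicit weight formulas already computed on the Kronecker and flag factors (Propositions \ref{p:muKronecker} and \ref{mu:FlagConfiguration}).

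First, I would reduce to the case of rational $\eta,\zeta$ by a density argument (since $\mu_{\eta,\zeta}(U,V)$ depends linearly on $(\eta,\zeta)$) and clear denominators: pick a positive integer $D$ such that $\tilde a_i^j := qD(\eta_{i+1}^j - \eta_i^j)$ and $\tilde b_i^j := pD(\zeta_{i+1}^j - \zeta_i^j)$ are all positive integers. Denoting by $\pi : E_\spadesuit(p,q,r,s) \to \FF^s\times\HH^s$ the projection, set
\[L := L_\chi^{\,pqD\gcd(p,q)} \otimes \pi^*\OO(p\tilde{\mathbf a}, q\tilde{\mathbf b}).\]
Since $\tilde{\mathbf a},\tilde{\mathbf b}$ have positive entries, $\OO(p\tilde{\mathbf a}, q\tilde{\mathbf b})$ is ample on $\FF^s\times\HH^s$, so $L$ fits the setting of Theorem \ref{t:HilbertMumford}. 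For any one-parameter subgroup $\lambda: \C^* \to G_{p,q}$ and any $x = (\mathbf A, \mathbf F, \mathbf H)\in E_\spadesuit$, I would compute $\mu_L(\lambda, x)$ by combining the contributions of each factor via Proposition \ref{p:MuProduct} and substituting the formulas of Propositions \ref{p:muKronecker} and \ref{mu:FlagConfiguration} (noting that the weights attached to $\OO(p\tilde{\mathbf a}, q\tilde{\mathbf b})$ are $\tilde\eta = qD\eta$ and $\tilde\zeta = pD\zeta$). Using $p = p'\gcd(p,q)$ and $q = q'\gcd(p,q)$, the numerical factors should collapse so that whenever the finiteness condition $A_j(U_n(\lambda))\subset V_n(\lambda)$ (for all $j, n$) is satisfied,
\[\mu_L(\lambda, x) = pqD\sum_{n\in\Z} \mu_{\eta,\zeta}\bigl(U_n(\lambda), V_n(\lambda)\bigr).\]

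The equivalence of (semi)stability notions then follows immediately. If $x$ is $(\eta,\zeta)$-semistable, every summand on the right is nonnegative, hence $\mu_L(\lambda,x)\geq 0$. Conversely, given $(U,V)$ with $A_j(U)\subset V$ for all $j$ and $\mu_{\eta,\zeta}(U,V)<0$, consider the one-parameter subgroup acting as multiplication by $t$ on $U\subset\C^p$ and trivially on a chosen complement, and analogously on $\C^q$ with respect to $V$. Its filtrations satisfy $U_1(\lambda) = U$ and $V_1(\lambda) = V$, while every other pair $(U_n, V_n)$ equals $(\C^p,\C^q)$ or $(\{0\},\{0\})$, both of which make $\mu_{\eta,\zeta}$ vanish by direct inspection. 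Hence $\mu_L(\lambda,x) = pqD\,\mu_{\eta,\zeta}(U,V) < 0$, contradicting $L$-semistability. The stable case is handled identically with strict inequalities. I expect the main delicacy to lie in the book-keeping of the factors $p, q, p', q'$ when choosing the tensor power of $L_\chi$; once the normalization $pqD\gcd(p,q)$ is guessed correctly, the remaining computations are routine additive expansions of the two HM weight formulas.
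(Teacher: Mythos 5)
Your proposal is correct and follows essentially the same route as the paper: reduce to rational weights, clear denominators to build an ample $G_{p,q}$-linearized bundle as a tensor product of a power of $L_\chi$ with a flag-configuration bundle $\OO(p\tilde{\mathbf a},q\tilde{\mathbf b})$, expand $\mu_L$ additively via Propositions \ref{p:MuProduct}, \ref{p:muKronecker} and \ref{mu:FlagConfiguration} to get $\mu_L(\lambda,x)=pqD\sum_n\mu_{\eta,\zeta}(U_n(\lambda),V_n(\lambda))$, and test the converse with the one-parameter subgroup built from projectors onto $U$ and $V$ (your normalization checks out). The only point to state more carefully is the rational-approximation step: linearity alone is not quite enough, one must also observe that only finitely many (in)equalities with bounded integral coefficients occur, so that a nearby rational $(\eta',\zeta')$ satisfies exactly the same strict and non-strict conditions.
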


\begin{proof}
Let us first remark the following: one can find \emph{rational} weights $(\eta', \zeta')$ close to $(\eta, \zeta)$ such that the $(\eta', \zeta')$-(semi)stable points are exactly the $(\eta, \zeta)$-(semi)stable points. Indeed, all the (semi)stability conditions in Definition \ref{d:(epsilon,eta)stability} for all points in $E_\spadesuit(p,q,r,s)$ are linear equations and inequalities in $(\eta, \zeta)$ with integral coefficients bounded by some constant. There are thus only finitely many such conditions, and one can find a rational point $(\eta',\zeta')$ close to $(\eta, \zeta)$ which satisfies the same exact conditions.

We can therefore assume without loss of generality that the weights $(\eta, \zeta)$ are rational. Let $k$ be a positive integer such that $(\frac{k}{p}\eta, \frac{k}{q}\zeta)$ is integral. Set 
\[a_i^j = \frac{k}{p}\eta_{i+1}^j - \frac{k}{q}\eta_i^j~,\quad b_i^j = \frac{k}{q}\zeta_{i+1}^j - \frac{k}{q}\zeta_i^j~.\]
Let $L_{\mathbf a,\mathbf b}$ denote the $G_{p,q}$-linearized line bundle over $\FF^s\times \HH^s$ defined in Section \ref{ss:FlagConfigurations}. Let also $L_\chi$ be the $G_{p,q}$-linearized line bundle over $E(p,q,r)$ defined in Section \ref{ss:KroneckerVarieties}. Finally let $L_{k,\mathbf a, \mathbf b}$ denote the $G_{p,q}$-linearized line bundle on $E_\spadesuit(p,q,r,s)$ defined by
\[L_{k,\mathbf a, \mathbf b} = p_E^*L_\chi^{k\gcd(p,q)}\otimes p_{\mathcal{F}\times \mathcal{H}}^*L_{\mathbf a, \mathbf b}~,\]
where $p_E$ and $p_{\mathcal{F}\times \mathcal{H}}$ respectively denote the projections to the factors $E(p,q,r)$ and $\FF^s \times \HH^s$. We now write $\mu_{k,\mathbf a,\mathbf b}$ for $\mu_{L_{k,\mathbf a, \mathbf b}}$.

\smallskip
Let $(\mathbf{A},\mathbf F,\mathbf H)$ be a point in $E_\spadesuit(p,q,r,s)$. Given a one parameter subgroup $\lambda: \C^* \to G_{p,q}$, putting together Propositions \ref{p:muKronecker}, \ref{p:MuGrassmanian} and \ref{p:MuProduct}, we obtain that $\mu_{k,\mathbf a,\mathbf b}(\mathbf{A},\mathbf F,\mathbf H) = + \infty$ unless $A^j(U_n(\lambda)) \subset V_n(\lambda)$ for all $n$ and all $j$, in which case
\begin{align*}
\mu_{k,\mathbf a,\mathbf b}(\lambda, (\mathbf{A},F,H)) =& \sum_{n\in \Z} \left(\frac{k}{p} \norm{\eta} \dim U_n(\lambda) - k \vert \eta(U_n(\lambda)\cap \mathbf F)\vert + \frac{k}{q}\norm{\zeta} \dim V_n(\lambda) - k \vert \zeta(V_n(\lambda) \cap \mathbf H)\vert \right. \\
& \phantom{\sum_{n\in \Z} \left(\right.} \left. \phantom{\frac{k}{p}} + kp \dim V_n(\lambda) - k q \dim U_n(\lambda) \right)~.
\end{align*}

If $(\mathbf{A},\mathbf F,\mathbf H)$ is not $L_{k,\mathbf a,\mathbf b}$-semistable, then there exists a one parameter subgroup $\lambda$ such that $\mu_{k,\mathbf a,\mathbf b}(\lambda, (\mathbf{A},\mathbf F,\mathbf H)) <0$. Thus, for some $n\in \Z$, 
\[p \dim(V_n(\lambda)) - q \dim(U_n(\lambda)) +  \frac{\norm{\eta}}{p} \dim V_n(\lambda) - \vert \eta(V_n(\lambda)\cap \mathbf F)\vert + \frac{\norm{\zeta}}{q} \dim V_n(\lambda) - \vert \zeta(V_n(\lambda) \cap \mathbf H)\vert<0~,\]
contradicting $(\eta, \zeta)$-stability.

Conversely, if $(\mathbf{A},\mathbf F,\mathbf H)$ is not $(\eta, \zeta)$-semistable, one can find $U\subset \C^p$ and $V\subset \C^q$ such that 
\[(*) = p \dim(V) - q \dim(U) +  \frac{\norm{\eta}}{p} \dim U - \vert \eta (V\cap \mathbf F)\vert + \frac{\norm{\zeta}}{q} \dim V - \vert \zeta(W\cap \mathbf H)\vert<0~.\]
Define $\lambda(e^t) = \left(\exp(t p_U),\exp(t p_V)\right)$, where $p_U$ and $p_V$ are respectively projectors on $U$ and $V$. Then 
\[U_n(\lambda) = \left \vert \begin{array}{l} \{0\} \quad \textrm{if $n\leq 0$} \\
																		U \quad \textrm{if $n=1$}\\
																		\C^p \quad \textrm{if $n\geq 2$} \end{array} \right.\]
and
\[V_n(\lambda) = \left \vert \begin{array}{l} \{0\} \quad \textrm{if $n\leq 0$} \\
																		V\quad \textrm{if $n=1$}\\
																		\C^q \quad \textrm{if $n\geq 2$} \end{array}\right.~.\]
Thus $\mu_{k,\mathbf a, \mathbf b}(\lambda (\mathbf A, \mathbf F,\mathbf H)) = (*) < 0$, contradicting $L^{k,\mathbf a, \mathbf b}$-stability. 

This proves the equivalence between $(\eta, \zeta)$-semistability and $L_{k,\mathbf a, \mathbf b}$-semistability. Weakening the inequalities in the proof gives the equivalence between $(\eta, \zeta)$-stability and $L_{k,\mathbf a, \mathbf b}$-stability.
\end{proof}

Despite being given by an explicit formula, the $(\eta,\zeta)$-stability criterion is not very enlightening in general. For future use, let us specialize to the case where $\eta$ and $\zeta$ are small.

\begin{coro} \label{c:FeatherStabilitySmallEtaZeta}
For $(\eta, \zeta)$ sufficiently close to $(0,0)$, a point $(\mathbf A, \mathbf F,\mathbf H) \in E_\spadesuit(p,q,r,s)$ is $(\eta, \zeta)$-semistable if and only if, for every proper subspaces $U$ and $V$ of $\C^p$ and $\C^q$ such that $A^j(U) \subset V$ for all $j$,
\begin{itemize}
\item either $\dim V > \frac{q}{p} \dim U$
\item or $\dim V = \frac{q}{p} \dim U$ and 
\begin{equation}\label{eq:SimplifiedStability}
\vert \eta(U\cap \mathbf F)\vert + \vert\zeta(V\cap \mathbf H)\vert \leq \left(\norm{\eta} + \norm{\zeta}\right) \frac{\dim V}{q}~.
\end{equation}
\end{itemize}
It is stable if it is semistable and equality in \eqref{eq:SimplifiedStability} only holds for $U = V= \{0\}$ and $U= \C^p$, $V=\C^q$.

In particular, if $p$ and $q$ are coprime, then $\mathcal{R}_\spadesuit(p,q,r,s, \eta, \zeta)$ fibers over $\mathcal{R}(p,q,r)$ with fibers isomorphic to $\mathcal{F}^s \times \mathcal{H}^s$.
\end{coro}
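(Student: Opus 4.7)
The idea is to analyze $\mu_{\eta,\zeta}(U,V)$ by isolating its dominant integer part. Rearranging the defining expression from Definition~\ref{d:(epsilon,eta)stability} gives
\[
\mu_{\eta,\zeta}(U,V) = (p\dim V - q\dim U) + \Bigl(\tfrac{\norm{\eta}}{p}\dim U - \vert\eta(U\cap \mathbf F)\vert\Bigr) + \Bigl(\tfrac{\norm{\zeta}}{q}\dim V - \vert\zeta(V\cap \mathbf H)\vert\Bigr).
\]
The first term is an integer, while the two parenthesized corrections are bounded in absolute value by a constant $C=C(p,q,s)$ times $\max_{i,j}|\eta_i^j|+\max_{i,j}|\zeta_i^j|$. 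Indeed, writing $\vert\eta^j(U\cap F^j)\vert = \sum_i \eta_i^j\bigl(\dim(U\cap F_{i-1}^j)-\dim(U\cap F_i^j)\bigr)$, the multiplicities are non-negative integers summing to $\dim U\leq p$, giving $|\vert\eta^j(U\cap F^j)\vert|\leq p\max_i|\eta_i^j|$; the same holds for the $\norm{\eta}/p$ term, and analogously for $\zeta$.

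Choose $(\eta,\zeta)$ small enough that $C(\max_{i,j}|\eta_i^j|+\max_{i,j}|\zeta_i^j|)<1$. Then, for any pair $(U,V)$ with $A^j(U)\subset V$ for all $j$:
\begin{itemize}
\item if $p\dim V > q\dim U$, the main term is $\geq 1$ and dominates the correction, so $\mu_{\eta,\zeta}(U,V)>0$;
\item if $p\dim V < q\dim U$, symmetrically $\mu_{\eta,\zeta}(U,V)<0$, contradicting semistability;
\item if $p\dim V = q\dim U$, the main term vanishes and $\mu_{\eta,\zeta}(U,V)\geq 0$ rewrites, using the common ratio $\dim V/q = \dim U/p$, as exactly the inequality~\eqref{eq:SimplifiedStability}.
\end{itemize}
This trichotomy proves the equivalence between $(\eta,\zeta)$-semistability and the stated criterion. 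The stability version follows the same pattern with strict inequalities, after noting that the two trivial pairs $(\{0\},\{0\})$ and $(\mathbb{C}^p,\mathbb{C}^q)$ always achieve equality in the rewritten condition and must be excluded by hand.

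For the fibering assertion when $\gcd(p,q)=1$, the equality $p\dim V=q\dim U$ combined with $\dim U\leq p$ and $\dim V\leq q$ forces $(U,V)$ to be one of the two trivial pairs. Consequently, the stability of $(\mathbf A,\mathbf F,\mathbf H)$ depends only on $\mathbf A$ and reduces to King's condition on $E(p,q,r)$ (Theorem~\ref{t:KingStability}). The projection $E_\spadesuit(p,q,r,s)\to E(p,q,r)$ is $G_{p,q}$-equivariant and sends semistable points to semistable points; since the stabilizer of a stable $\mathbf A$ is the center $Z_{p,q}$, which acts trivially on $\FF^s\times\HH^s$, the induced morphism of GIT quotients has fibers isomorphic to $\FF^s\times\HH^s$. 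The main (mild) obstacle is ensuring that $C$ is truly uniform in $(U,V)$, which follows because the pair $(\dim U,\dim V)$ takes only finitely many values and the bounds above depend only on these dimensions.
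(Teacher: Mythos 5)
Your proof is correct and follows essentially the same route as the paper: isolate the integer term $p\dim V - q\dim U$ in $\mu_{\eta,\zeta}(U,V)$, observe that the $(\eta,\zeta)$-dependent corrections are uniformly small (you make explicit the bound the paper only asserts), and in the balanced case rewrite $\mu_{\eta,\zeta}$ as the stated inequality; the coprime/fibration argument via the trivial action of the stabilizer $\C^*(\Id_p,\Id_q)$ on $\FF^s\times\HH^s$ is also the paper's. No gaps.
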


\begin{proof}
For $(\eta,\zeta)$ small enough, $\mu_{\eta,\zeta}(U,V)$ has the same sign as $q\dim V - p \dim U$ when the latter is non-zero. If $q\dim U - p \dim V=0$, we have
\begin{eqnarray*}
\mu_{\eta, \zeta}(U,V) &=& \frac{\dim U}{q}\norm{\eta} + \frac{\dim V}{p} \norm{\zeta} -\vert \eta(U\cap \mathbf F)\vert - \vert \zeta(V\cap \mathbf H)\vert \\
&=& \frac{\dim V}{q}\left(\norm{\eta} +\norm{\zeta}\right) - \vert \eta(U\cap \mathbf F)\vert - \vert \zeta(V\cap \mathbf H)\vert~.\end{eqnarray*}
The first part of the proposition follows.

\smallskip
Assume now that $p$ and $q$ are coprime. Then we cannot have $q\dim U - p \dim V=0$ for proper subspaces $U$ and $V$. Hence, for $(\eta, \zeta)$ small enough, $(\mathbf{A}, \mathbf F,\mathbf H)$ is $(\eta,\zeta)$-stable if and only if $\mathbf{A}\in E(p,q,r)$ is stable. We thus have
\[E_\spadesuit^{\textit{stable}}(p,q,r,s, \eta, \zeta) = E(p,q,r)^{\textit{stable}} \times \mathcal{F}^s \times \mathcal{H}^s\]
and the projection on the first factor induces a morphism from $\mathcal{R}_\spadesuit (p,q,r,s, \eta,\zeta)$ to $\mathcal{R}(p,q,r)$. The fact that this map is a fibration with fibers isomorphic to $\mathcal{F}^s \times \mathcal{H}^s$ follows from the fact that the stabilizer of any stable point $\mathbf{A} \in E(p,q,r)$ is the subgroup $\C^*(\Id_p,\Id_q)$, which acts trivially on $E_\spadesuit(p,q,r,s)$.
\end{proof}

\section{Compact components}\label{s:mainsection}

\subsection{A compactness criterion} 

In this subsection, we find a sufficient criterion for a relative component $\MM(\alpha,\beta,d)$ of parabolic $\SU(p,q)$-Higgs bundles over $X=\ProjC{1}\setminus\{x_1,\ldots,x_s\}$ to be compact.

\smallskip
Recall from Subsection \ref{s:su(p,q)higgsbbundles} that a parabolic $\SU(p,q)$-Higgs bundle of type $(\alpha,\beta)=\big\{(\alpha^1,\beta^1),\ldots,(\alpha^s,\beta^s)\big\}$ is a pair $(\UU_\bullet\oplus \VV_\bullet,\gamma\oplus\delta)$ where:
\begin{itemize}
\item $\UU_\bullet$ and $\VV_\bullet$ are parabolic bundles of respective rank $p$ and $q$, with respective parabolic structures of type $\alpha^k$ and $\beta^k$ at $x_k\in\{x_1,\ldots,x_s\}$ and such that $\det(\UU_\bullet)\otimes \det(\VV_\bullet) =\mathcal{O}$.
\item $\gamma\in H^0\big(\KK(D)\otimes\Hom(\UU_\bullet,\VV_\bullet)\big)$ and $\delta\in \Hom\big(\KK(D)\otimes\Hom(\VV_\bullet,\UU_\bullet)\big)$, where $D$ is the effective divisor $x_1+\cdots+x_s$.
\end{itemize}

\begin{prop}\label{p:gammavanishes}
Let $(\alpha,\beta)$ be a $\SU(p,q)$-multiweight satisfying 
\begin{itemize}
\item[(1)] $\quad\alpha_p^j < \beta_1^j\quad \textrm{for all }j\in \{1,\ldots, s\}$,
\item[(2)] $\quad\epsilon \equaldef \sum_{j=1}^s \beta_q^j-\alpha_1^j < 2$.
\end{itemize}
Let $(\UU_\bullet \oplus \VV_\bullet,\gamma\oplus \delta)$ be a semistable parabolic Higgs bundle of type $(\alpha,\beta)$. If $\deg(\UU_\bullet)-\deg(\VV_\bullet) < 2- \epsilon$, then $\delta$ vanishes identically.
\end{prop}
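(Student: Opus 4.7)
The plan is to assume $\delta \not\equiv 0$ and destabilize $(\UU_\bullet \oplus \VV_\bullet, \gamma \oplus \delta)$ using two parabolic sub-Higgs-bundles built from the image and the kernel of $\delta$, reaching a contradiction with the upper bound $\deg(\UU_\bullet) - \deg(\VV_\bullet) < 2 - \epsilon$.

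The first observation is that hypothesis (1) forces $\delta$ to be holomorphic in the ordinary sense. The strong parabolic condition for a morphism $f : \VV_\bullet \to \UU_\bullet$, namely ``$\beta_i^j \geq \alpha_k^j$ implies $f(\VV_{x_j,i}) \subset \UU_{x_j, k+1}$'', is triggered by every pair $(i,k)$ because $\alpha_p^j < \beta_1^j$; taking $k$ maximal forces $f$ to vanish at each puncture. Consequently $\Hom(\VV_\bullet, \UU_\bullet) = \Hom(\VV, \UU)(-D)$ as sheaves, so that $\delta$ is a section of $\KK \otimes \Hom(\VV, \UU)$, i.e.\ a genuinely holomorphic morphism $\VV \to \UU \otimes \KK$.

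Assume $\delta \neq 0$ and let $r \geq 1$ be its generic rank. Let $\VV''_\bullet \subset \VV_\bullet$ be the saturated kernel (rank $q-r$) and let $\UU'_\bullet \subset \UU_\bullet$ be the saturation in $\UU$ of $\delta(\VV) \otimes \KK^{-1}$ (rank $r$), both equipped with their induced parabolic structures. By construction, $\UU'_\bullet \oplus \VV_\bullet$ and $0 \oplus \VV''_\bullet$ are $(\gamma \oplus \delta)$-invariant, so semistability yields
\[ \deg(\UU'_\bullet) + \deg(\VV_\bullet) \leq 0 \quad\textrm{and}\quad \deg(\VV''_\bullet) \leq 0. \]
Moreover $\delta$ descends to a generically injective morphism $\VV/\VV'' \to \UU' \otimes \KK$ of rank-$r$ sheaves; taking determinants and using $\deg \KK = -2$ on $\ProjC{1}$ produces the non-parabolic inequality $\deg(\VV) - \deg(\VV'') \leq \deg(\UU') - 2r$.

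Combining the three bounds with the identity $\deg(\UU_\bullet) + \deg(\VV_\bullet) = 0$ (from $\det \UU_\bullet \otimes \det \VV_\bullet = \OO$) gives, after a short manipulation,
\[ \deg(\UU_\bullet) - \deg(\VV_\bullet) = 2 \deg(\UU_\bullet) \geq 2r + \Vert \alpha' \Vert - \Vert \beta' \Vert, \]
where $\alpha'$ and $\beta'$ denote the parabolic types of $\UU'_\bullet$ and $(\VV/\VV'')_\bullet$. Since at each $x_j$ these induced weights form an $r$-element sub-multiset of the original weights, the crude estimates $\Vert \alpha' \Vert \geq r \sum_j \alpha_1^j$ and $\Vert \beta' \Vert \leq r \sum_j \beta_q^j$ imply $\Vert \alpha' \Vert - \Vert \beta' \Vert \geq -r\epsilon$, so
\[ \deg(\UU_\bullet) - \deg(\VV_\bullet) \geq r(2 - \epsilon) \geq 2 - \epsilon, \]
using $r \geq 1$ and $\epsilon < 2$. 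This contradicts the hypothesis, forcing $\delta = 0$. The delicate step is the first one: hypothesis (1) must be leveraged to turn $\delta$, which a priori lies in $H^0(\KK(D) \otimes \Hom(\VV_\bullet, \UU_\bullet))$, into a genuinely holomorphic map $\VV \to \UU \otimes \KK$; once this reformulation is in hand, the rest is standard semistability together with weight bookkeeping.
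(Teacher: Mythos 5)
Your proof is correct and follows essentially the same route as the paper's: hypothesis (1) removes the poles of $\delta$, the two $(\gamma\oplus\delta)$-invariant subbundles built from the saturated kernel and saturated image of $\delta$ are exactly the ones the paper uses, and the final bound $\deg(\UU_\bullet)-\deg(\VV_\bullet)\geq r(2-\epsilon)$ coincides with the paper's $(q-n)(2-\epsilon)$. The only (cosmetic) difference is that you bound the induced weights on the quotient $(\VV/\VV'')_\bullet$ from above by $\beta_q^j$, whereas the paper bounds the induced weights on the kernel from below by $\beta_1^j,\ldots,\beta_n^j$; the two estimates are equivalent.
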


\begin{rmk}
Since we only consider strongly parabolic Higgs bundles, the assumption that $\alpha^j_p \leq \beta^j_1$ would be sufficient to deduce the vanishing of $\delta$. We wrote a strict inequality because we will use later that condition~(1) is open.
\end{rmk}

\begin{proof}
The proof is similar to the Higgs bundle proof of the Milnor--Wood inequality (see \cite{BradlowGarciaPradaGothen}), where a violation of the Milnor--Wood inequality for $\deg(\UU_\bullet) - \deg(\VV_\bullet)$ would imply the vanishing of one of the terms of the Higgs field (contradicting stability).

\smallskip
The condition $\alpha_p^j<\beta_1^j$ implies that $\delta$ does not have poles at the puncture $x_j$, \emph{i.e.} $\delta$ is a holomorphic section of $\Hom(\VV,\UU\otimes \KK)$. Let $N$ and $I\otimes \KK$ be the subsheaves of $\VV$ and $\UU\otimes \KK$ respectively given by the kernel and the image of $\delta$, so that $\delta$ induces an exact sequence of sheaves
\[ 0 \to N \to V \to I\otimes \KK \to 0.\]
In particular, we have
\begin{equation}\label{e:compactnesscriterion1}
\deg(V)=\deg(N)+\deg(I\otimes \KK).
\end{equation}
Let $\NN\subset \VV$ and $\II\subset \UU$ be respectively the saturation of the sheaves $N$ and $I$. Denote by $n$ the rank of $\NN$. We have $\deg(N)\leq\deg(\NN)$ and $\deg(I)\leq \deg(\II)$. Equation (\ref{e:compactnesscriterion1}) implies
\begin{equation}\label{e:compactnesscriterion2}
\deg(\VV)\leq\deg(\NN)+\deg(\II) -2 (q-n).
\end{equation}
The bundles $\NN_\bullet$ and $\VV_\bullet\oplus \II_\bullet$ are $\gamma \oplus \delta$-invariant. By semistability, they must have non-positive parabolic degree. To write this more explicitly, let us define
\[\left\{\begin{array}{lll}
A(m) & = & \sum_{j=1}^s \sum_{i=1}^m \alpha_i^j\\
B(m) & = & \sum_{j=1}^s \sum_{i=1}^m \beta_{q+1-i}^j~.\end{array}\right.\]
In particular $A(p)=\Vert\alpha\Vert$ and $B(q)=\Vert\beta\Vert$.

The parabolic weights on $\NN_\bullet$ (resp. $\II_\bullet$) at the point $x_j$ are larger or equal to $\beta^j_1,\ldots, \beta^j_n$ (resp. $\alpha^j_1, \ldots, \alpha^j_{q-n}$). The stability condition thus implies
\begin{equation}\label{e:6}
\left\{\begin{array}{lll} \deg(\NN)+ B(q) - B(q-n) \leq 0 \\
\deg(\VV)+\deg(\II)+ B(q) + A(q-n) \leq 0~.\end{array}\right.
\end{equation}
Adding the two lines in \eqref{e:6} and using Equation \eqref{e:compactnesscriterion2}, we get
\begin{equation}\label{compactnesscriterion3}
2\deg(\VV) + 2 B(q) +2(q-n) + A(q-n) - B(q-n) \leq 0.
\end{equation}
Since $\deg(\VV_\bullet) = - \deg(\UU_\bullet) = \deg(\VV) + B(q)$, we can rewrite this as
\[\deg(\UU_\bullet)-\deg(\VV_\bullet)\geq 2(q-n) - (B(q-n)-A(q-n))~.\]
Finally, by definition of $A(q-n)$ and $B(q-n)$, we have
\begin{eqnarray*}
B(q-n)-A(q-n) & = & \sum_{j=1}^s \sum_{i=1}^{q-n} \beta_{q+1-i}^j -\alpha_i^j \\
& \leq &\sum_{j=1}^s (q-n)(\beta_q^j-\alpha_1^j)\\
&= &(q-n)\epsilon~.
\end{eqnarray*}
We thus obtain
\[\deg(\UU_\bullet)-\deg(\VV_\bullet)\geq (q-n)(2 - \epsilon)~.\]

If $\epsilon < 2$ and $\deg(\UU_\bullet)-\deg(\VV_\bullet) < 2-\epsilon$, then clearly this inequality only holds for $n= q$, which means that $\NN= \ker \delta = \VV$. 
\end{proof}

Consider $(\alpha, \beta)$ satisfying the conditions of Proposition \ref{p:gammavanishes} and, as before, denote by $\MM(\alpha,\beta,d)$ the moduli space of polystable parabolic $\SU(p,q)$-Higgs bundles of type $(\alpha,\beta)$ such that $\deg(\UU)-\deg(\VV)=d$.

If $(\UU_\bullet \oplus \VV_\bullet,\gamma\oplus \delta)$ is a stable Higgs bundle in $\MM(\alpha,\beta,d)$, then $\delta=0$ by the previous proposition. In particular, $\VV_\bullet$ is $\gamma\oplus\delta$-invariant and $\deg(\VV_\bullet)< 0$ by stability. Using $\deg(\UU_\bullet)+\deg(\VV_\bullet)=0$, stability gives $d>\Vert \beta\Vert-\Vert\alpha\Vert$. This provides a necessary condition to have stable point in $\MM(\alpha,\beta,d)$. 

Considering the interval
\[J_{\alpha,\beta}:=\big( \norm{\beta} - \norm{\alpha}, \norm{\beta} - \norm{\alpha} +2 -\epsilon \big)~,\] 
then Proposition \ref{p:gammavanishes} gives the following compactness criterion:

\begin{coro}\label{c-compactnessscriterion}
Let $(\alpha,\beta)$ be a $\SU(p,q)$-multiweight satisfying the conditions of Proposition \ref{p:gammavanishes}. If $d$ is an integer in $J_{\alpha, \beta}$, then $\MM(\alpha,\beta,d)$ is compact.
\end{coro}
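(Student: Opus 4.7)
The plan is to combine the vanishing result (Proposition \ref{p:gammavanishes}) with the properness of the Hitchin fibration (Theorem \ref{t:PropernessHitchinMap}). The key observation is that the hypothesis $d\in J_{\alpha,\beta}$ is exactly designed so that Proposition \ref{p:gammavanishes} applies on the whole component $\MM(\alpha,\beta,d)$, forcing every polystable representative to have nilpotent Higgs field, and hence to lie in the (compact) zero fiber of the Hitchin map.

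First, I would translate the condition $d\in J_{\alpha,\beta}$ into an inequality of parabolic degrees. By definition,
\[\deg(\UU_\bullet)-\deg(\VV_\bullet)=\deg(\UU)-\deg(\VV)+\norm{\alpha}-\norm{\beta}=d+\norm{\alpha}-\norm{\beta}~,\]
so the condition $\norm{\beta}-\norm{\alpha}<d<\norm{\beta}-\norm{\alpha}+2-\epsilon$ is equivalent to
\[0<\deg(\UU_\bullet)-\deg(\VV_\bullet)<2-\epsilon~.\]
In particular, the numerical hypothesis of Proposition \ref{p:gammavanishes} is satisfied for every polystable Higgs bundle in $\MM(\alpha,\beta,d)$.

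Next, I would apply Proposition \ref{p:gammavanishes} to conclude that $\delta\equiv 0$ for every $(\UU_\bullet\oplus\VV_\bullet,\gamma\oplus\delta)\in\MM(\alpha,\beta,d)$. In the splitting $\UU_\bullet\oplus\VV_\bullet$, the Higgs field then reads $\Phi=\left(\begin{smallmatrix}0 & 0\\ \gamma & 0\end{smallmatrix}\right)$, so $\Phi^2=0$; in particular $\mathrm{tr}(\Phi^i)=0$ for every $i\geq 1$, and therefore $\Pi_{\textrm{Hitchin}}$ vanishes identically on $\MM(\alpha,\beta,d)$. Hence $\MM(\alpha,\beta,d)\subset\Pi_{\textrm{Hitchin}}^{-1}(0)$.

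Finally, I would invoke Theorem \ref{t:PropernessHitchinMap}: properness of the Hitchin fibration implies that $\Pi_{\textrm{Hitchin}}^{-1}(0)$ is compact. On the other hand, since $\varphi:\MM(\alpha,\beta)\to\Z$ is continuous with integral values, the set $\MM(\alpha,\beta,d)=\varphi^{-1}(d)$ is a union of connected components, hence closed in $\MM(\alpha,\beta)$. As a closed subset of the compact set $\Pi_{\textrm{Hitchin}}^{-1}(0)$, it is itself compact. Since the real work has already been carried out in proving Proposition \ref{p:gammavanishes}, I do not foresee any serious obstacle here; the only things to verify are the straightforward arithmetic passing from $d\in J_{\alpha,\beta}$ to the parabolic-degree inequality, and the observation that a strictly lower-triangular Higgs field has trivial characteristic polynomial.
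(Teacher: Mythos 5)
Your proposal is correct and follows essentially the same route as the paper: translate $d\in J_{\alpha,\beta}$ into the parabolic-degree bound, invoke Proposition \ref{p:gammavanishes} to kill $\delta$, observe the Higgs field is then nilpotent so the component sits in the zero fiber of the Hitchin map, and conclude by properness. Your extra remarks (the nilpotency computation $\Phi^2=0$ and the closedness of $\varphi^{-1}(d)$) are correct details the paper leaves implicit.
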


\begin{proof}
Let $(\UU_\bullet\oplus \VV_\bullet, \gamma\oplus \delta)$ be a point in $\MM(\alpha,\beta,d)$. By definition of $J_{\alpha,\beta}$, one has
\[\deg(\UU_\bullet)-\deg(\VV_\bullet) = \norm{\alpha} - \norm{\beta} + d < 2- \epsilon~.\]
Therefore, $\delta$ vanishes identically by Proposition \ref{p:gammavanishes}. In particular, the Higgs field $\gamma\oplus \delta$ is nilpotent and the Hitchin map sends $\MM(\alpha,\beta,d)$ to $0$. The properness of the Hitchin map (see Theorem \ref{t:PropernessHitchinMap}) thus implies that $\MM(\alpha,\beta,d)$ is compact.
\end{proof}

\subsection{A non trivial example}\label{ss:ConstantType}

When $\epsilon <1$, the interval $J_{\alpha, \beta}$  has length at least $1$ and thus contains an integer. It is not easy, however, to construct a stable Higgs bundle with $\deg(\UU)- \deg(\VV) = d$ for generic $(\alpha, \beta)$. Here, we restrict to more specific choices of weights that are ``constant'' at each puncture, for which computations are easier. This will allow us to construct examples of moduli spaces $\mathcal{M}(\alpha,\beta,d)$ that are compact and isomorphic to the Kronecker moduli space $\mathcal{R}(p,q,s-2)$. In the next subsection, we will look at generic choices of weights close to these constant weights.

\smallskip
We will say that a multiweight $(\alpha,\beta)$ is \emph{constant} if for all $j\in \{1,\ldots, s\}$,
 \[\left\{ \begin{array}{l} \alpha_1^j  = \cdots = \alpha_p^j = \alpha^j\\
 \beta_1^j = \cdots = \beta_q^j = \beta^j~.\end{array}\right.\]
In this section, all multiweights $(\alpha,\beta)$ are assumed to be constant. We write
 \[\left\{ \begin{array}{l} \vert\alpha \vert = \sum_{j=1}^s \alpha^j \\
\vert\beta \vert = \sum_{j=1}^s \beta^j~,\end{array}\right.\]
so that $\norm{\alpha} = p \vert\alpha \vert,~\norm{\beta} = q \vert\beta\vert$ and $\epsilon = \vert\beta \vert - \vert\alpha\vert$.

Recall also that, by Birkhoff-Grothendieck's Theorem (see Theorem \ref{t:Birkhoff-Grothendieck}), any rank $n$ vector bundle $\EE$ over $\ProjC{1}$ decomposes holomorphically as $\EE=\OO(d_1)\oplus\cdots\oplus \OO(d_n)$ where $d_1\geq\cdots\geq d_n$, and $\overrightarrow{\deg}(\EE)=(d_1,\ldots,d_n)$ is the degree vector of $\EE$. 

\begin{lem}\label{l:decompositionU}
Let $(\alpha,\beta)$ be a constant multiweight satisfying the following conditions: $\alpha^j<\beta^j$ for all $j\in\{1,\ldots,s\}$ and $\epsilon \equaldef \vert\beta \vert - \vert \alpha \vert <2$. If $(\UU_\bullet\oplus \VV_\bullet,\gamma\oplus\delta)\in\MM(\alpha,\beta,d)$ for $d\in J_{\alpha,\beta}$, then $\overrightarrow{\deg}(\VV)=(-a,\ldots,-a)$ for some $a>0$.
\end{lem}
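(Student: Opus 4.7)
The plan is to first apply Proposition \ref{p:gammavanishes} to reduce to the case $\delta = 0$, and then exploit a tight ``gap'' estimate on $\deg(\mathcal{V}_\bullet)$ to force the Birkhoff--Grothendieck decomposition of $\mathcal{V}$ to be balanced. The conditions of Proposition \ref{p:gammavanishes} are all met: $\alpha^j < \beta^j$ is given, $\epsilon < 2$ is given, and $d \in J_{\alpha,\beta}$ translates via Remark \ref{r-degreeU} into
\[
\deg(\UU_\bullet) - \deg(\VV_\bullet) = d - (\|\beta\| - \|\alpha\|) < 2 - \epsilon.
\]
Since our Higgs bundle is polystable (hence semistable), this yields $\delta \equiv 0$, just as in the proof of Corollary \ref{c-compactnessscriterion}.

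I then invoke Birkhoff--Grothendieck (Theorem \ref{t:Birkhoff-Grothendieck}) to write $\mathcal{V} = \bigoplus_{i=1}^q \mathcal{O}(d_i)$ with $d_1 \geq \cdots \geq d_q$, and aim to show $d_1 = d_q$. For each $k$, the sub-bundle $0 \oplus \mathcal{V}^{(k)}_\bullet$, where $\mathcal{V}^{(k)} := \bigoplus_{i \leq k} \mathcal{O}(d_i)$ carries the induced constant parabolic structure, is $\gamma \oplus \delta$-invariant since $\delta = 0$. Semistability gives the family of inequalities
\[
\sum_{i=1}^k d_i + k|\beta| \leq 0 \qquad (1 \leq k \leq q-1),
\]
while the normalization $\det(\UU_\bullet) \otimes \det(\VV_\bullet) = \mathcal{O}$ together with Remark \ref{r-degreeU} gives the identity
\[
\sum_{i=1}^q d_i + q|\beta| = \deg(\VV_\bullet) = \tfrac{1}{2}(\|\beta\| - \|\alpha\| - d).
\]

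The crux of the argument is a gap estimate: the hypothesis $d \in J_{\alpha,\beta}$, combined with $\epsilon < 2$, places
\[
\deg(\VV_\bullet) \in \bigl(\tfrac{\epsilon-2}{2},\, 0\bigr) \subset (-1,\,0).
\]
Specializing the semistability inequality to $k = 1$ gives $d_1 \leq -|\beta|$, and subtracting the inequality for $k = q - 1$ from the total identity gives $d_q \geq \deg(\VV_\bullet) - |\beta|$. Combining,
\[
0 \leq d_1 - d_q \leq -|\beta| - \bigl(\deg(\VV_\bullet) - |\beta|\bigr) = -\deg(\VV_\bullet) < 1.
\]
Since $d_1, d_q \in \mathbb{Z}$, this forces $d_1 = d_q$, so all $d_i$ equal a common integer $-a$. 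Finally, $0 \leq \alpha^j < \beta^j$ forces $|\beta| > 0$, whence $a = -d_1 \geq |\beta| > 0$ is a positive integer.

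I do not foresee any substantial obstacle. The conceptual insight is simply that the ``slack'' $-\deg(\VV_\bullet) < 1$, built into the definition of $J_{\alpha,\beta}$ through the condition $\epsilon < 2$, is too small to accommodate any integer gap in the Birkhoff--Grothendieck decomposition of $\VV$.
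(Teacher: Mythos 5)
Your proof is correct and follows essentially the same route as the paper: reduce to $\delta\equiv 0$ so that sub-bundles of $\VV$ are $\Phi$-invariant, then combine the semistability inequalities with the observation that $d\in J_{\alpha,\beta}$ forces $-\deg(\VV_\bullet)<1$, leaving no room for an integer gap in the Birkhoff--Grothendieck decomposition of $\VV$. The only (harmless) differences are bookkeeping --- the paper tests a maximal-degree line sub-bundle against the average degree $\deg(\VV)/q$, whereas you test the rank-one and corank-one summands to pin down $d_1$ and $d_q$ separately --- and the fact that you make explicit the appeal to Proposition \ref{p:gammavanishes}, which the paper leaves implicit.
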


\begin{proof}
Let $(\UU_\bullet\oplus \VV_\bullet,\gamma\oplus\delta)\in\MM(\alpha,\beta,d)$ be such a parabolic $\SU(p,q)$-Higgs bundle.

By Remark \ref{r-degreeU}, $\deg(\VV)=-\frac{1}{2}(p\vert\alpha\vert + q\vert\beta\vert + d)$. Using $d<q\vert\beta\vert-p\vert\alpha\vert +2-\epsilon$, we get
\[\deg \VV > - \left(q |\beta| +1 - \frac{\epsilon}{2}\right)~.\]
By semistability, $\deg(\VV)< 0$, so we can write $\deg(\VV)=-aq+r$ with $a\in \N,~a>0$ and $r\in\{0,\ldots,q-1\}$. The previous equation implies
\begin{equation}\label{e:linedecomposition1}
-\vert \beta\vert< \frac{\deg(\VV)}{q}+\frac{1-\frac{1}{2}\epsilon}{q}=-a + \frac{r+1-\frac{1}{2}\epsilon}{q}.
\end{equation}
Let $\MM\subset \VV$ be a line bundle of maximal degree $m$. By semistability, $m+\vert\beta\vert\leq 0$, so Equation (\ref{e:linedecomposition1}) gives
\[m\leq  -\vert \beta\vert < -a + \frac{r+1-\frac{1}{2}\epsilon}{q}.\]
Using $\frac{r+1-\frac{1}{2}\epsilon}{q}<1$, we get $m\leq -a$. By maximality of $m$ we get $r=0$, $m=-a$ and $\VV = \OO(-a) \oplus \cdots \oplus \OO(-a)$. 
\end{proof}

The previous lemma shows that the assumptions on the triple $(\alpha,\beta,d)$ in Corollary \ref{c-compactnessscriterion} are rather restrictive. Nonetheless, we will be able to find $(\alpha,\beta,d)$ satisfying these conditions and for which $\mathcal{M}(\alpha,\beta,d)\neq \emptyset$. 

\begin{lem}\label{l:decompositionV}
Given an integer $a\in\left[\frac{s+p}{p+q},\frac{(p+q-1)s+p}{p+q}\right]$, there exists a constant multiweight $(\alpha,\beta)$ satisfying 
\begin{itemize}
\item[(a)] $\alpha^j<\beta^j$ for all $j\in\{1,\ldots,s\}$,
\item[(b)] $\epsilon \equaldef \vert\beta \vert - \vert\alpha\vert \in \left( \frac{2pq-2p-2q}{2pq-p-q},1\right)$,
\item[(c)] $d \equaldef(q-p)a+p\in J_{\alpha,\beta}$.
\end{itemize}
Moreover, if $(\UU_\bullet\oplus \VV_\bullet,\gamma\oplus\delta)\in \MM(\alpha,\beta,d)$, then $\overrightarrow{\deg}(\UU)=(-a+1,\ldots,-a+1)$.
\end{lem}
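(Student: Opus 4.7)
The plan is to first translate all three conditions into relations among $|\alpha|=\sum_j\alpha^j$, $|\beta|=\sum_j\beta^j$ and $\epsilon=|\beta|-|\alpha|$. Combining the prescription $d=(q-p)a+p$ with Remark~\ref{r-degreeU} produces
\[\deg(\UU)=p(1-a),\qquad \deg(\VV)=-qa,\qquad a=\frac{p|\alpha|+q|\beta|+p}{p+q},\]
from which $|\alpha|=a-\frac{p+q\epsilon}{p+q}$. A short algebraic manipulation then shows that the two inequalities defining $d\in J_{\alpha,\beta}$ become respectively $\epsilon<1$ (the lower bound $d>\norm{\beta}-\norm{\alpha}$) and $\epsilon>\frac{2pq-2p-2q}{2pq-p-q}$ (the upper bound $d<\norm{\beta}-\norm{\alpha}+2-\epsilon$), so conditions~(b) and~(c) are in fact equivalent in this setup.

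\textbf{Existence of $(\alpha,\beta)$.} I would then observe that the multiweight hypothesis together with~(a) forces each $n^j:=p\alpha^j+q\beta^j$ to be a positive integer in $\{1,\dots,p+q-1\}$, with $\sum_j n^j=a(p+q)-p$. The stated integer range of $a$ places this sum in $[s,\,s(p+q-1)]$, so an admissible tuple $(n^j)$ always exists, for example by spreading values as evenly as possible. For each $j$, the pairs $(\alpha^j,\beta^j)\in[0,1)^2$ satisfying $p\alpha^j+q\beta^j=n^j$ and $\alpha^j<\beta^j$ form a non-empty open segment on which $\beta^j-\alpha^j$ varies continuously from $0^+$ up to a positive maximum. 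Thus $\epsilon=\sum_j(\beta^j-\alpha^j)$ can be tuned continuously on an open interval, and a direct check shows this attainable interval always meets $(\frac{2pq-2p-2q}{2pq-p-q},1)$, so $\epsilon$ can be placed there.

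\textbf{Structure of $\UU$.} For the second part, let $(\UU_\bullet\oplus\VV_\bullet,\gamma\oplus\delta)\in\MM(\alpha,\beta,d)$. Proposition~\ref{p:gammavanishes} forces $\delta\equiv 0$ and Lemma~\ref{l:decompositionU} gives $\VV\cong\OO(-a)^q$. By Birkhoff--Grothendieck, write $\UU=\bigoplus_{i=1}^p\OO(u_i)$ with $u_1\geq\cdots\geq u_p$ and $\sum u_i=p(1-a)$. To bound $u_p$ from below I pick a surjection $\UU\twoheadrightarrow\OO(u_p)$ and let $\UU'\subset\UU$ be its kernel, a rank-$(p-1)$ sub-bundle of degree $p(1-a)-u_p$. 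Because $\delta=0$, the sub-bundle $\UU'_\bullet\oplus\VV_\bullet$ is automatically $\gamma\oplus\delta$-invariant, so polystability yields
\[\deg(\UU'_\bullet)+\deg(\VV_\bullet)\leq 0.\]
Writing $\deg(\UU'_\bullet)=p(1-a)-u_p+(p-1)|\alpha|$ and $\deg(\VV_\bullet)=q(|\beta|-a)$, and using $p|\alpha|+q|\beta|=a(p+q)-p$ to eliminate $|\beta|$, this inequality collapses to
\[u_p\geq -|\alpha|.\]

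\textbf{Conclusion and main obstacle.} Since $\epsilon\in(0,1)$ one has $|\alpha|\in(a-1,a)$, and $|\alpha|$ is never an integer (the equation $p+q\epsilon=k(p+q)$ has no solution for integer $k$ and $\epsilon\in(0,1)$). Therefore $-|\alpha|\in(-a,-a+1)$, and integrality of $u_p$ upgrades $u_p\geq -|\alpha|$ to $u_p\geq -a+1$. Combined with $u_1\geq\cdots\geq u_p$ and $\sum u_i=p(-a+1)$, this forces $u_i=-a+1$ for every $i$, yielding $\overrightarrow{\deg}(\UU)=(-a+1,\dots,-a+1)$. The main obstacle is selecting the correct invariant sub-bundle: tests against line sub-bundles of $\UU$ produce only weak bounds on $u_1$ with an unfavourable dependence on $\epsilon$, whereas the complementary rank-$(p-1)$ sub-bundle exploits the full identity relating $|\alpha|$, $|\beta|$ and $a$, producing the sharp bound $u_p\geq -|\alpha|$ -- which, thanks to integrality, is just enough to pin down the whole degree vector.
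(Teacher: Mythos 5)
Your proof is correct. The existence half follows essentially the same route as the paper (choose integers $n^j=p\alpha^j+q\beta^j\in\{1,\dots,p+q-1\}$ summing to $(p+q)a-p$, then tune the gaps $\beta^j-\alpha^j$), but the \emph{Moreover} half takes a genuinely different and cleaner route. The paper tests semistability against $\LL_\bullet\oplus\VV_\bullet$ where $\LL\subset\UU$ is a line sub-bundle of maximal degree $l$; this yields the upper bound $l\leq -a+1+\frac{q(p-1)}{p+q}(1-\epsilon)$, whose last term must then be shown to be $<1$ --- which is precisely where the lower bound $\epsilon>\frac{2pq-2p-2q}{2pq-p-q}$ from condition (b) (or $\epsilon>0$ in the degenerate cases $p=1$, $q=1$, $(p,q)=(2,2)$) gets consumed, after a small case analysis. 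You instead test against the corank-one invariant sub-bundle $\UU'\oplus\VV$ with $\UU'=\bigoplus_{i<p}\OO(u_i)$ (legitimately $\gamma\oplus 0$-invariant since $\VV'=\VV$, with all induced parabolic weights equal to $\alpha^j$ because the flags are trivial), and the identity $p\vert\alpha\vert+q\vert\beta\vert=(p+q)a-p$ makes the inequality collapse to $u_p\geq-\vert\alpha\vert$, which needs only $0<\epsilon<1$ together with integrality of $u_p$. Your bound is sharper, avoids the case analysis, and both arguments then conclude identically from $\sum u_i=p(1-a)$. The one place where you assert rather than prove is the claim that the attainable interval for $\epsilon$, namely $\bigl(0,\sum_j\min(\tfrac{n^j}{q},\tfrac{p+q-n^j}{p})\bigr)$, always meets $\bigl(\tfrac{2pq-2p-2q}{2pq-p-q},1\bigr)$; since the target's right endpoint is $1$, this amounts to showing $\sum_j\min(\tfrac{n^j}{q},\tfrac{p+q-n^j}{p})\geq 1$ for the evenly-spread $(n^j)$, which requires a short case analysis according to whether the $n^j$ sit above or below $q$, using $1\leq a\leq s$ (both consequences of the hypothesis on $a$). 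This is exactly the verification the paper carries out; it does go through, but it should be written out.
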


\begin{rmk}
The interval $\left[\frac{s+p}{p+q},\frac{(p+q-1)s+p}{p+q}\right]$ has length $\left(1- \frac{2}{p+q}\right)s$ which is at least $1$ as soon as $s\geq 3$ and $p$ or $q$ is greater than $1$. In that case this inteval always contains an integer. On the other hand, when $p = q =1$, it is reduced to $\{\frac{s+1}{2}\}$, which is an integer if and only if $s$ is odd (this corresponds to Example \ref{ex:p=q=1}).
\end{rmk}
\begin{proof}[Proof of Lemma \ref{l:decompositionV}]
The condition $a\in\left[\frac{s+p}{p+q},\frac{(p+q-1)s+p}{p+q}\right]$ is equivalent to
\begin{equation} \label{eq:Inequalitya}
s\leq (p+q)a-p \leq  (p+q-1) s~.
\end{equation}
Since $a$ is an integer, it implies $a\leq s$.

Write $(p+q)a -p = k s + r$ with $0\leq r< s$ and define $k^j = k+1$ for $j\in \{1,\ldots, r\}$ and $k^j = k$ for $j\in \{r+1,\ldots, s\}$, so that $\sum_{j=1}^s k^j = (p+q)a - p$. The inequality \eqref{eq:Inequalitya} implies that $1\leq k^j \leq p+q-1$ for all $j$.

We now choose $(\epsilon^j)_{1\leq j \leq s} \in \R_{>0}$ satisfying the following conditions:
\begin{equation} \label{eq:ConditionsEpsiloni}
\left \{
\begin{array}{l}
\epsilon^j < \min\left(\frac{k^j}{q},\frac{p+q-k^j}{p}\right) \quad \textrm{for all $j$}\\
\frac{2pq-2p-2q}{2pq-p-q} < \sum_{j=1}^s \epsilon^j < 1~.
\end{array}\right.
\end{equation}
Let us first show that these conditions can be simultaneously satisfied. If $k \leq q-1$, then $\min\left(\frac{k^j}{q},\frac{p+q-k^j}{p}\right) = \frac{k^j}{q}$, and the conditions \eqref{eq:ConditionsEpsiloni} can be simultaneously satisfied since
\[\sum_{j=1}^s \frac{k^j}{q} = \frac{(p+q)a-p}{q} \geq 1~.\]
Similarly, if $k\geq q$, we have $\min\left(\frac{k^j}{q},\frac{p+q-k^j}{p}\right) = \frac{p+q-k^j}{p}$, and the conditions \eqref{eq:ConditionsEpsiloni} can be simultaneously satisfied since
\[\sum_{j=1}^s \frac{p+q-k^j}{p} = \frac{(p+q)(s-a)+p}{p} \geq 1 \quad \textrm{(using $a\leq s$)}~.\\ \]

With the $k^j$ and $\epsilon^j$ chosen as above, let us set
\[\left\{\begin{array}{lll} 
\alpha^j & = & \frac{k^j - q\epsilon^j}{p+q} \\
\beta^j & = & \frac{k^j + p \epsilon^j}{p+q}~. \end{array}\right.\]
Then $p\alpha^j+q\beta^j = k^j \in \N$, $0<\alpha^j<\beta^j< 1$ for all $j\in\{1,\ldots,s\}$ and $\epsilon = \sum_{j=1}^s \epsilon^j \in \left(\frac{2pq-2p-2q}{2pq-p-q},1 \right)$, so $(\alpha,\beta)$ is a constant $\SU(p,q)$-multiweight and satisfies conditions (a) and (b).

With these choices, we have
\[\left\{ \begin{array}{lll}
\vert\alpha\vert & = & a - \frac{p+q\epsilon}{p+q} \\
\vert\beta\vert & = & a - \frac{p-p\epsilon}{p+q}~,\end{array}\right.\]
and thus
\[q\vert \beta \vert-p\vert\alpha\vert=(q-p)a-\frac{p(q-p)}{p+q}+\frac{2pq}{p+q}\epsilon,\]
the condition $(q-p)a+p\in J_{\alpha,\beta}$ is equivalent to the condition
\[-\frac{p(q-p)}{p+q}+\frac{2pq}{p+q}\epsilon<p<-\frac{p(q-p)}{p+q}+2+\frac{2pq-p-q}{p+q}\epsilon, \]
which is equivalent to $\epsilon \in \left(\frac{2pq-2p-2q}{2pq-p-q},1 \right)$. In particular, (c) is satisfied.

\smallskip
Consider now $(\UU_\bullet\oplus \VV_\bullet,\gamma\oplus \delta)\in \MM(\alpha,\beta,d)$, where $(\alpha,\beta,d)$ satisfy the conditions (a), (b) and (c). We know that $\delta \equiv 0$ by Proposition \ref{p:gammavanishes}. We have 
\[ \left\{\begin{array}{lllll}
\deg(\UU) & = & -\frac{1}{2}(p\vert\alpha\vert+q\vert\beta\vert -(q-p)a-p) & = & -p(a-1) \\
\deg(\VV) & = & -\frac{1}{2}(p\vert\alpha\vert+q\vert\beta\vert +(q-p)a + p) & = & -qa
\end{array}\right..\] 
Let $\LL\subset \UU$ be a line bundle of maximal degree $l$. The bundle $\LL_\bullet\oplus \VV_\bullet$ is $\gamma\oplus 0$-invariant so we have, by semistability,
\[l + \vert \alpha \vert + \deg(\VV)+q\vert\beta \vert\leq 0~,\]
which gives
\begin{eqnarray*}
l & \leq & qa-(q+1)\vert\beta\vert +\epsilon \\
& = & qa -(q+1)a+\frac{(q+1)p}{p+q}(1-\epsilon)+\epsilon \\
& = & -a+1+\frac{q(p-1)}{p+q}(1-\epsilon). \\
\end{eqnarray*}
If $(p,q)=(2,2)$, or $p=1$ or $q=1$, the condition $\epsilon>0$ gives 
\[l\leq -a+1+\frac{p(q-1)}{p+q}<-a+2.\]
Otherwise, $\epsilon>\frac{2pq-2p-2q}{2pq-p-q}$ and so we get
\begin{eqnarray*}
l & \leq & -a + 1 + \frac{q(p-1)}{p+q}\left(1-\frac{2pq-2p-2q}{2pq-p-q} \right) \\
& = & -a +1+\frac{pq-q}{2pq-p-q} \\
& < & -a+2
\end{eqnarray*}
In particular, $l\leq -a+1$. But on the other hand, by maximality, $l\geq \frac{\deg(\UU)}{p}=-a+1$. So $l=-a+1$ and $\UU=\OO(-a+1)\oplus\cdots\oplus \OO(-a+1)$.
\end{proof}
\begin{rmk} \label{rmk:DecompUVGenericWeights}
In the proofs of Lemmas \ref{l:decompositionU} and \ref{l:decompositionV}, the assumption that $(\alpha,\beta)$ is constant simplifies the notations a lot. However, one could easily extend the proof to $\SU(p,q)$-multiweights in a neighborhood of a constant multiweight satisfying the required hypotheses.
\end{rmk}

For the rest of this section, we fix an integer $a$ and a constant multiweight $(\alpha,\beta)$ stafisfying the conditions of Lemma \ref{l:decompositionV}. We take $\UU = \OO(-a+1)\oplus \cdots \oplus \OO(-a+1) = \C^p \otimes \OO(-a+1)$, $\VV = \OO(-a)\oplus \cdots \oplus \OO(-a) = \C^q\otimes \OO(-a)$ and set $d= (q-p)a + p$, so that $\deg \UU - \deg \VV = d$.

According to Lemmas \ref{l:decompositionU} and \ref{l:decompositionV}, every semistable parabolic $\SU(p,q)$-Higgs bundle in $\MM(\alpha,\beta,d)$ is of the form $(\UU_\bullet\oplus \VV_\bullet, \gamma \oplus 0)$, where the parabolic structure on $\UU_\bullet$ and $\VV_\bullet$ at $x_j$ is given by trivial flags with weights $\alpha^j$ and $\beta^j$ respectively, and for some $\gamma \in H^0(\KK(D)\otimes\Hom(\UU,\VV))$.  \footnote{Note that we do have $\deg(\UU) + \dev(\VV) + \norm{\alpha} + \norm{\beta} = 0$.} We are left with understanding which choices of $\gamma$ actually produce a (semi)stable Higgs bundle.

\smallskip
The identifications $\UU = \C^p \otimes \OO(-a+1)$ and $\VV = \C^q \otimes \OO(-a)$ induce an isomorphism
\begin{eqnarray*}
H^0(\KK(D) \otimes \Hom(\UU,\VV))& \cong & H^0(\Hom(\C^p\otimes \OO(-a+1), \C^q\otimes \OO(-a+s-2))\\
&\cong & \Hom(\C^p,\C^q)\otimes H^0(\OO(s-3))~.
\end{eqnarray*}
Note that $H^0(\OO(s-3))$ has dimension $s-2$. Recall from Section \ref{ss:KroneckerVarieties} that $E(p,q,r)=\Hom(\C^p,C^q)^r$, so fixing a basis of $H^0(\OO(s-3))$, we eventually obtain an isomorphism
\[\left\{\begin{array}{rcl}
E(p,q,s-2) & \longrightarrow & H^0(\KK(D)\otimes \Hom(\UU,\VV)) \\
\mathbf A & \longmapsto & \gamma_{\mathbf{A}} \end{array}\right..\]

\begin{lem} \label{l:EquivalenceStability}
Let $\mathbf A$ be a point in $E(p,q,s-2)$. The following are equivalent:
\begin{itemize}
\item the parabolic $\SU(p,q)$-Higgs bundle $(\UU\oplus \VV, \gamma_{\mathbf A} \oplus 0)$ is semistable (resp. stable), 
\item $\mathbf A$ is a semistable (resp. stable) point of the space $E(p,q,s-2)$ in the sense of Theorem~\ref{t:KingStability}.
\end{itemize}
\end{lem}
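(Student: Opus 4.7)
The plan is to match $\gamma_{\mathbf A}$-invariant sub-bundles of $\UU \oplus \VV$ with pairs of subspaces of $\C^p$ and $\C^q$, and to compare parabolic degrees on one side with King's numerical invariant on the other. For any subspaces $U \subset \C^p$ and $V \subset \C^q$ of respective dimensions $u$ and $v$, equip $U \otimes \OO(-a+1) \oplus V \otimes \OO(-a) \subset \UU \oplus \VV$ with the induced constant parabolic structure. Using the identity $p|\alpha| + q|\beta| = (p+q)a - p$ (from the proof of Lemma \ref{l:decompositionV}), a direct computation gives
\[\deg_{\mathrm{par}}\big(U \otimes \OO(-a+1) \oplus V \otimes \OO(-a)\big) = u(|\alpha| - a + 1) + v(|\beta| - a) = \frac{1-\epsilon}{p+q}\big(qu - pv\big).\]
Writing $\gamma_{\mathbf A} = \sum_{j=1}^{s-2} A_j \otimes f_j$ in the chosen basis of $H^0(\OO(s-3))$, linear independence of the $f_j$ implies that $\gamma_{\mathbf A}(U \otimes \OO(-a+1)) \subset V \otimes \OO(-a) \otimes \KK(D)$ if and only if $A_j(U) \subset V$ for all $j$. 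Since $\epsilon < 1$, the sign of the parabolic degree above is the sign of $qu - pv$, so testing Higgs (semi)stability against these specific sub-bundles is precisely King's (semi)stability of $\mathbf A$ in the sense of Theorem \ref{t:KingStability}.

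This already gives the implication Higgs (semi)stable $\Rightarrow$ King (semi)stable. For the converse, I would bound the parabolic degree of an arbitrary $\gamma_{\mathbf A}$-invariant sub-bundle $\UU' \oplus \VV'$ of ranks $(u, v)$ in terms of the displayed formula for a well-chosen $(U, V)$. Twisting $\UU'$ by $\OO(a-1)$ realizes it as a sub-bundle of the trivial bundle $\C^p \otimes \OO$, which splits as $\bigoplus_{i=1}^u \OO(c_i)$ with $c_i \leq 0$ by Birkhoff--Grothendieck. Setting $U := H^0(\UU' \otimes \OO(a-1)) \subset \C^p$, one has $\dim U = \#\{i : c_i = 0\}$ and an inclusion $U \otimes \OO(-a+1) \hookrightarrow \UU'$, which yields
\[\deg \UU' \leq -u(a-1) - (u - \dim U).\]
Defining $V := H^0(\VV' \otimes \OO(a)) \subset \C^q$ similarly gives $\deg \VV' \leq -va - (v - \dim V)$, and applying $\gamma_{\mathbf A}$ to $U \otimes \OO(-a+1) \subset \UU'$ while tracking constant sections (as in the first paragraph) yields $A_j(U) \subset V$ for all $j$.

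Adding the two bounds,
\[\deg_{\mathrm{par}}(\UU' \oplus \VV') \leq \frac{1-\epsilon}{p+q}\big(qu - pv\big) - (u - \dim U) - (v - \dim V),\]
and substituting the King inequality $p \dim V \geq q \dim U$ makes the right-hand side a non-positive combination of $u - \dim U$ and $v - \dim V$, which is strictly negative unless both vanish and King's inequality is saturated. The main obstacle will be the accounting for boundary cases $(U,V) \in \{(0,0),(\C^p,\C^q)\}$, which correspond to the zero and full sub-bundles and must be excluded from the Higgs stability test by the usual convention restricting to proper non-zero sub-bundles; together with checking that the strict version of King's inequality translates into strict Higgs stability for proper sub-bundles.
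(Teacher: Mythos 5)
Your forward direction (Higgs semistable $\Rightarrow$ King semistable) and the degree computation $\deg(\UU'_\bullet\oplus\VV'_\bullet)=\frac{(1-\epsilon)(q\dim U-p\dim V)}{p+q}$ for the product sub-bundles are correct and coincide with the paper's argument. The gap is in your converse, at the step ``applying $\gamma_{\mathbf A}$ to $U\otimes\OO(-a+1)\subset\UU'$ \ldots\ yields $A_j(U)\subset V$'' with $V=H^0(\VV'\otimes\OO(a))$. What the invariance $\gamma_{\mathbf A}(\UU')\subset\VV'\otimes\KK(D)$ actually gives is that, for each $u_0\in U$, the section $\sum_j A_j(u_0)\otimes f_j$ of $\C^q\otimes\OO(s-3)$ lies in the subsheaf $\big(\VV'\otimes\OO(a)\big)\otimes\OO(s-3)$. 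This forces each coefficient $A_j(u_0)$ to lie in $H^0(\VV'\otimes\OO(a))$ only when $\VV'\otimes\OO(a)$ is a \emph{trivial} subbundle $V\otimes\OO$ of $\C^q\otimes\OO$ --- which is the situation of your first paragraph, but not the general one. For instance, if $\VV'\otimes\OO(a)$ is the tautological subbundle $\OO(-1)\subset\C^2\otimes\OO$, then $H^0(\OO(-1))=0$ while $H^0\big(\OO(-1)\otimes\OO(s-3)\big)=H^0(\OO(s-4))\neq 0$, so there are sections $\sum_j c_j\otimes f_j$ of the subsheaf with no $c_j$ in $H^0$. Without $A_j(U)\subset V$ you cannot invoke King's inequality for the pair $(U,V)$, and your final estimate does not close.

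There are two ways out. The paper sidesteps the issue with a dichotomy: either $\deg\UU'=-(a-1)\rank(\UU')$ and $\deg\VV'=-a\rank(\VV')$, in which case $\UU'\oplus\VV'$ is literally of the form $U'\otimes\OO(-a+1)\oplus V'\otimes\OO(-a)$ and King's criterion applies; or the degree drops by at least $1$, and then the crude bound $\deg(\UU'_\bullet\oplus\VV'_\bullet)\leq\frac{pq(1-\epsilon)}{p+q}-1<0$ suffices without any appeal to semistability of $\mathbf A$ --- this is precisely where the lower bound $\epsilon>\frac{2pq-2p-2q}{2pq-p-q}$ imposed in Lemma \ref{l:decompositionV} is used. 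Alternatively, your interpolation can be repaired by replacing $V$ with the span $\widetilde V\subset\C^q$ of all fibres of $\VV'\otimes\OO(a)$: then $A_j(U)\subset\widetilde V$ does hold, and since the globally generated quotient $(\widetilde V\otimes\OO)/(\VV'\otimes\OO(a))$ has no trivial summand (by minimality of $\widetilde V$), one gets $\deg\VV'\leq -va-(\dim\widetilde V-v)$ with $\dim\widetilde V\geq v$; combined with $q\dim U\leq p\dim\widetilde V$ this gives
\[\deg(\UU'_\bullet\oplus\VV'_\bullet)\leq\left(\tfrac{q(1-\epsilon)}{p+q}-1\right)(u-\dim U)+\left(\tfrac{p(1-\epsilon)}{p+q}-1\right)(\dim\widetilde V-v)\leq 0~,\]
which only needs $0<\epsilon<1$ and is in that respect sharper than the paper's case analysis. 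As written, though, your argument does not go through.
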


\begin{proof}
Let $\UU'$ and $\VV'$ be subbundles of $\UU$ and $\VV$ respectively. Remark first that
\[\deg \UU' \leq (-a+1) \rank (\UU')\quad \textrm{and} \quad \deg \VV' \leq -a \rank (\VV')~,\]
with equalities if and only if $\UU' = U'\otimes \OO(-a+1)$ and $\VV' = V'\otimes \OO(-a)$ for some subspaces $U'$ and $V'$ of $\C^p$ and $\C^q$ respectively. In that case, moreover, we have that 
\[\gamma_{\mathbf A}(\UU')\subset \VV'\]
if and only if 
\[A^j(U')\subset V'\]
for all $j\in \{1,\ldots, s-2\}$.

If $\mathbf A$ is unstable, one can find $U'\subset\C^p$ and $V' \subset \C^q$ such that $A^j(U')\subset V'$ for all $j\in \{1,\ldots, s-2\}$ and such that $\dim V' < \frac{q}{p} \dim U'$. Set $\UU' = U'\otimes \OO(-a+1) \subset \UU$ and $\VV' = V'\otimes \OO(-a) \subset \VV$. Then $\UU'\oplus \VV'$ is invariant by $\gamma_{\mathbf A} \oplus 0$. Using $\vert\alpha\vert=a-\frac{p+q\epsilon}{p+q}$ and $\vert\beta\vert=a-\frac{p-p\epsilon}{p+q}$, we have
\begin{eqnarray*}
\deg(\UU'_\bullet\oplus \VV'_\bullet) & = & - (a-1) \dim U' - a \dim V' + \vert\alpha \vert \dim U' + \vert \beta \vert \dim V'  \\
&=& \dim U' - \frac{\left( (p + q\epsilon)\dim U' + (p- p\epsilon)\dim V'\right)}{p+q} \\
&=& \frac{(q\dim U' - p\dim V')(1-\epsilon)}{p+q}\\
&> & 0 \quad \textrm{since $p \dim V' < q \dim U'$ and $\epsilon < 1$.}
\end{eqnarray*}
Thus $(\UU\oplus \VV, \gamma_{\mathbf A} \oplus 0)$ is unstable.

\smallskip
Conversely, assume that $\mathbf A$ is semistable. Let $\UU'$ and $\VV'$ be two subbundles of $\UU$ and $\VV$ of respective rank $m$ and $n$ such that $\UU' \oplus \VV'$ is invariant by $\gamma\oplus 0$ (\emph{i.e.} such that $\gamma(\VV') \subset \UU'$). Two cases appear:
\medskip
\begin{itemize}
\item If $\deg(\UU') = -(a-1)\rank (\UU')$ and $\deg(\VV) = -a\, \rank (\VV')$, then we have $\UU' = U'\otimes \OO(-a+1)$ as well as $\VV' = V'\otimes \OO(-a)$, where $A^j(U')\subset V'$ for all $j\in \{1,\ldots ,s\}$. By the previous computation we have
\[\deg(\UU'_\bullet \oplus \VV'_\bullet) = \frac{(q \dim U' - p \dim V')(1-\epsilon)}{p+q}~,\]
which is non-positive by semistability of $\mathbf A$.

\medskip
\item Otherwise, $\deg(\UU'\oplus \VV') \leq -(a-1)\rank (\UU') - a\, \rank (\VV') -1$. The same computation as before gives
\begin{equation} \label{eq:stability2}\deg(\UU'_\bullet \oplus \VV'_\bullet) \leq \frac{(q\, \rank (\UU') - p\, \rank (\VV'))(1-\epsilon)}{p+q} -1 \leq \frac{pq(1-\epsilon)}{p+q}-1~.\end{equation}
If $p$ and $q$ are greater than $1$, recall that we imposed the condition
\[\epsilon > \frac{2pq - 2p - 2q}{2pq-p-q} = 1 - \frac{p+q}{2pq-p-q}~.\]
Plugging this in \eqref{eq:stability2}, we get
\[\deg(\UU'_\bullet \oplus \VV'_\bullet) < \frac{pq}{2pq - p -q} - 1 \leq 0\]
since $p+q \leq pq$ for $p$ and $q \geq 2$.

If $p=1$, then we simply use $\epsilon>0$, and \eqref{eq:stability2} gives
\[\deg(\UU'_\bullet \oplus \VV'_\bullet) \leq \frac{-1}{q+1} < 0~.\]
Similarly, if $q=1$, we get
\[\deg(\UU'_\bullet \oplus \VV'_\bullet) \leq \frac{-1}{p+1} < 0~.\\ \]
\end{itemize}

This proves that $(\UU_\bullet \oplus \VV_\bullet , \gamma_{\mathbf A} \oplus 0)$ is semistable if and only if $\mathbf A \in E(p,q,s-2)$ is semistable. Repeating the proofs with strict inequalities instead of inequalities, one obtains the equivalence between the stability of $(\UU_\bullet \oplus \VV_\bullet , \gamma_{\mathbf A} \oplus 0)$ and that of $\mathbf A$.
\end{proof}

\begin{coro}
The moduli space $\MM(\alpha, \beta,d)$ is isomorphic to the Kronecker variety $\RR(p,q,s-2)$. In particular,
\begin{itemize}
\item It is non-empty if and only if $\frac{p}{q}+\frac{q}{p} < s-2$ or $s\in \{3,4\}$ and $p=q$,
\item it contains stable Higgs bundles as soon as $\frac{p}{q}+\frac{q}{p} < s-2$,
\item if $p$ and $q$ are coprime, it contains only stable Higgs bundles.
\end{itemize}
\end{coro}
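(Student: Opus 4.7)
The plan is to assemble the work already done in the section into a clean identification of moduli spaces. First, by Lemmas \ref{l:decompositionU} and \ref{l:decompositionV}, every polystable Higgs bundle in $\MM(\alpha,\beta,d)$ has underlying bundles (holomorphically) isomorphic to the fixed bundles $\UU = \C^p\otimes \OO(-a+1)$ and $\VV = \C^q\otimes \OO(-a)$, and its second Higgs component $\delta$ vanishes. Thus the only remaining datum is $\gamma \in H^0(\KK(D)\otimes \Hom(\UU,\VV))$, and the identification displayed above the statement of Lemma \ref{l:EquivalenceStability} gives an isomorphism of this space with $E(p,q,s-2) = \Hom(\C^p,\C^q)^{s-2}$.

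Next I would verify that the gauge-equivalence of Higgs bundles, after this identification, corresponds exactly to the $G_{p,q}$-action on $E(p,q,s-2)$. Since $\OO(-a+1)$ and $\OO(-a)$ have only scalar automorphisms, the full automorphism groups of $\UU$ and $\VV$ (as holomorphic bundles compatible with the parabolic structures, which are trivial flags here) are respectively $\GL(p,\C)$ and $\GL(q,\C)$. The resulting action on $\gamma$ reads precisely as $(g_1,g_2)\cdot \mathbf A = (g_2 A^j g_1^{-1})_j$, which is the defining action on $E(p,q,s-2)$. Combined with Lemma \ref{l:EquivalenceStability}, this means that the tautological map
\[ \Psi \colon E(p,q,s-2)^{ss} \longrightarrow \MM(\alpha,\beta,d) \]
is well defined and its fibers are exactly the equivalence classes identifying orbits whose closures intersect. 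By the universal property of GIT quotients (applied both to $\RR(p,q,s-2)$ and to Yokogawa's construction of $\MM(\alpha,\beta,d)$), $\Psi$ descends to an isomorphism of algebraic varieties $\RR(p,q,s-2)\stackrel{\sim}{\to} \MM(\alpha,\beta,d)$.

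Once the isomorphism is established, the three bullet points are immediate consequences of results already proved in Section \ref{s:GIT}. Non-emptiness under the hypothesis $\frac{p}{q}+\frac{q}{p}<s-2$ and existence of stable points follow from Corollary \ref{c:ExistenceStablePoint}; the cases $s\in\{3,4\}$ with $p=q$ correspond to $r=s-2\in\{1,2\}$ with $p=q$ and non-emptiness is handled by Proposition \ref{p:R(p,p,2)}, which also confirms that no stable point exists in those two borderline cases. Finally, the statement that every semistable point is stable when $\gcd(p,q)=1$ is the last bullet of Corollary \ref{c:ExistenceStablePoint}.

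The step I expect to require most care is the promotion from a bijection of sets/orbits to an isomorphism of schemes: one must see both sides as coarse moduli for the same functor (or at least exhibit matching universal families). Concretely, the family over $E(p,q,s-2)^{ss}$ built from the fixed bundles $\UU$, $\VV$ and the tautological section $\gamma$ produces a classifying map to $\MM(\alpha,\beta,d)$ by Yokogawa's theorem, while the openness of the stratum where $(\UU,\VV)$ has the prescribed Birkhoff--Grothendieck type (guaranteed by Lemmas \ref{l:decompositionU} and \ref{l:decompositionV}) plus the rigidity of these bundles produces an inverse. Everything else in the proof is bookkeeping.
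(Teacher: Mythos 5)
Your proposal is correct and follows essentially the same route as the paper: use Lemmas \ref{l:decompositionU} and \ref{l:decompositionV} to pin down the underlying bundles and kill $\delta$, identify the remaining datum $\gamma$ with a point of $E(p,q,s-2)$, match (semi)stability via Lemma \ref{l:EquivalenceStability} and gauge equivalence with the $G_{p,q}$-action (using that $\Aut(\C^p\otimes\OO(-a+1))=\GL(p,\C)$), and invoke the coarse moduli property to descend to an isomorphism, with the bullet points then read off from Corollary \ref{c:ExistenceStablePoint} and Proposition \ref{p:R(p,p,2)}. The paper's proof is essentially identical, if anything slightly less explicit than you are about the scheme-theoretic descent step.
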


\begin{proof}

We set once and for all $\UU = \C^p \otimes \OO(-a+1)$ and $\VV = \C^q \otimes \OO(-a)$, and we let $\UU_\bullet$ and $\VV_\bullet$ denote the parabolic bundles with underlying bundles $\UU$ and $\VV$ and parabolic structure at $x_j$ given by trivial flags and respective weights $\alpha^j$ and $\beta^j$. Then the map 
\[\mathbf A \mapsto (\UU_\bullet \oplus \VV_\bullet, \gamma_{\mathbf A}\oplus 0)\]
identifies $E(p,q,s-2)$ with an algebraic family of $\SU(p,q)$-Higgs bundles. Since $\MM(\alpha,\beta)$ is a coarse moduli space of semistable parabolic Higgs bundles and since semistability in the sense of parabolic Higgs bundles and in the sense of Theorem \ref{t:KingStability} are equivalent by Lemma \ref{l:EquivalenceStability}, we obtain an algebraic map $\phi :E^{ss}(p,q,s-2) \to \MM(\alpha,\beta,d)$. By Lemmas \ref{l:decompositionV} and \ref{l:decompositionU}, this map is surjective. Finally, the automorphisms of $\UU = \C^p \otimes \OO(-a+1)$ and $\VV = \C^q \otimes \OO(-a)$ are given respectively by the actions of $\GL(p,\C)$ and $\GL(q,\C)$ on the factors $\C^p$ and $\C^q$. One easily deduces that $(\UU_\bullet\oplus \VV_\bullet, \gamma_{\mathbf A}\oplus 0)$ and $(\UU_\bullet \oplus \VV_\bullet, \gamma_{\mathbf A'}\oplus 0)$ are isomorphic if and only if $\mathbf A$ and $\mathbf A'$ are in the same $G_{p,q}$-orbit. This shows that the map $\phi$ factors through a bijection from $\mathcal{R}(p,q,s-2)$ to $\mathcal{M}(\alpha,\beta,d)$. Since $\mathcal{M}(\alpha,\beta,d)$ is a normal variety (see \cite{Yokogawa2}), this bijection is an isomorphism.
\end{proof}

\subsection{Pertubation of the constant weights}

In this subsection, we remark that the example constructed previously can be perturbed to obtain non-empty compact components for generic multiweights.

\smallskip
Let us start with a fixed integer $a\in\left[\frac{s+q}{p+q},\frac{(p+q-1)s+q}{p+q}\right]$. Set $d = (q-p)a+p$ and choose a constant type $(\alpha^j, \beta^j)$ satisfying the hypotheses of Lemma \ref{l:decompositionV}.

Recall from Section \ref{s:su(p,q)higgsbbundles} that $\WW(s,p,q)$ denotes the space of $\SU(p,q)$-multiweights.

\begin{prop} \label{p:CompactComponentsGeneric}
Assume $s-2 > \frac{p}{q} + \frac{q}{p}$. Then there exists a neighborhood $W(\alpha,\beta)$ of $(\alpha,\beta)$ in $\WW(s,p,q)$ such that for all $(\alpha',\beta')\in W(\alpha,\beta)$, the moduli space $\MM(\alpha',\beta',d)$ is compact and non-empty.
\end{prop}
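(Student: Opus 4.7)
The proposition splits into two claims --- compactness and non-emptiness --- that I would address separately.

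Compactness is almost automatic from the openness of the hypotheses of Corollary \ref{c-compactnessscriterion}. The conditions $\alpha_p^j < \beta_1^j$ and $\epsilon < 2$ of Proposition \ref{p:gammavanishes} are strict inequalities in $(\alpha, \beta)$, and so is the condition $d \in J_{\alpha, \beta}$. All three hold for the constant multiweight by Lemma \ref{l:decompositionV}. I would thus take $W(\alpha,\beta)$ to be the intersection of these open conditions inside $\WW(s,p,q)$ and invoke Corollary \ref{c-compactnessscriterion} to obtain compactness of $\MM(\alpha',\beta',d)$ for every $(\alpha',\beta')\in W(\alpha,\beta)$.

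For non-emptiness, the first step is to extend Lemmas \ref{l:decompositionU} and \ref{l:decompositionV} to non-constant weights close to $(\alpha,\beta)$, as signalled by Remark \ref{rmk:DecompUVGenericWeights}. After shrinking $W(\alpha,\beta)$ if necessary, every semistable Higgs bundle in $\MM(\alpha',\beta',d)$ has underlying bundles $\UU \cong \C^p \otimes \OO(-a+1)$ and $\VV \cong \C^q \otimes \OO(-a)$ with Higgs field of the form $\gamma \oplus 0$. The free data therefore reduce to a section $\gamma \in E(p,q,s-2)$ together with flag configurations $(\mathbf F, \mathbf H) \in \FF(\C^p)^s \times \FF(\C^q)^s$ at the punctures, identifying $\MM(\alpha',\beta',d)$ with a feathered Kronecker variety in the sense of Section \ref{ss:FeatheredKroneckerVarieties}.

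The hypothesis $s - 2 > p/q + q/p$ supplies, via Corollary \ref{c:ExistenceStablePoint}, a King-stable point $\mathbf A \in E(p,q,s-2)$. I would form $(\UU_\bullet \oplus \VV_\bullet, \gamma_{\mathbf A} \oplus 0)$ with an arbitrary choice of flags and verify parabolic semistability by a direct degree computation. Writing $\alpha'^j_i = \alpha^j + \eta^j_i$ and $\beta'^j_i = \beta^j + \zeta^j_i$, the argument of Lemma \ref{l:EquivalenceStability} generalizes to give, for any invariant product subbundle $(U' \otimes \OO(-a+1))_\bullet \oplus (V' \otimes \OO(-a))_\bullet$,
\[\deg(\UU'_\bullet \oplus \VV'_\bullet) = \frac{(1-\epsilon)(q \dim U' - p \dim V')}{p+q} + |\eta(U' \cap \mathbf F)| + |\zeta(V' \cap \mathbf H)|.\]
King-stability of $\mathbf A$ forces $p \dim V' > q \dim U'$ strictly for every proper invariant pair, so the leading term is negative and bounded away from $0$ by a constant depending only on $p, q, \epsilon$; for $(\eta,\zeta)$ small enough it absorbs the perturbation. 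Subbundles not of this product form destabilize even less, by the argument of Lemma \ref{l:EquivalenceStability} whose strict inequalities persist under small perturbation.

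The main obstacle is the initial rigidity step: extending the Birkhoff--Grothendieck decomposition of Lemmas \ref{l:decompositionU} and \ref{l:decompositionV} from constant weights to a genuine open neighborhood. Since the proofs of those lemmas rely only on open inequalities on $\epsilon$ and $\vert\alpha\vert, \vert\beta\vert$, the rigidity is preserved, as indicated in Remark \ref{rmk:DecompUVGenericWeights}. A mild book-keeping point is to make the smallness requirement on $(\eta,\zeta)$ uniform over the finitely many dimension pairs and over all choices of flags; this is immediate from linear bounds of the form $|\eta(U'\cap \mathbf F)| \leq ps\,\|\eta\|_\infty$ and $|\zeta(V'\cap \mathbf H)| \leq qs\,\|\zeta\|_\infty$. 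With this in hand, the construction above delivers a stable point in $\MM(\alpha',\beta',d)$ for every $(\alpha',\beta')$ in the final neighborhood.
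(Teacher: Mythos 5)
Your proof is correct and follows essentially the same route as the paper: compactness from the openness of the conditions in Corollary~\ref{c-compactnessscriterion}, and non-emptiness by taking a King-stable point of $E(p,q,s-2)$ (Corollary~\ref{c:ExistenceStablePoint}), keeping the same underlying bundle and Higgs field, and observing that the (strict) stability inequalities persist under small perturbation of the weights. The only difference is that you do more than necessary: the Birkhoff--Grothendieck rigidity for \emph{all} semistable bundles of type $(\alpha',\beta')$ and the identification with a feathered Kronecker variety are the content of Proposition~\ref{p:FeatheredKroneckerGeneric}, whereas for non-emptiness it suffices to exhibit a single stable object, which is what the paper's two-line openness argument does.
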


We can actually be more precise and prove that these compact moduli spaces are isomorphic to some feathered Kronecker varieties introduced in Section \ref{ss:FeatheredKroneckerVarieties}.

\begin{prop} \label{p:FeatheredKroneckerGeneric}
Let $(\alpha',\beta')\in \WW(s,p,q)$ be a multiweight of the form $(\alpha,\beta)+ (\eta,\zeta)$ where
\[\eta_1^j < \cdots < \eta_p^j\]
and
\[\zeta_1^j < \cdots < \zeta_q^j\]
for all $j\in\{1,\ldots, s\}$. Then, for $(\eta,\zeta)$ small enough, $\mathcal M(\alpha',\beta',d)$ is isomorphic to the feathered Kronecker variety $\RR_\spadesuit(p,q,s-2,s,\eta,\zeta)$.
\end{prop}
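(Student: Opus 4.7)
\medskip
\noindent\textbf{Proof proposal for Proposition \ref{p:FeatheredKroneckerGeneric}.}

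The plan is to mimic the construction of Subsection \ref{ss:ConstantType} verbatim, with the flags at the punctures now carrying the perturbation. First, since the hypotheses of Proposition \ref{p:gammavanishes} are open and satisfied by $(\alpha,\beta)$ (with strict inequalities), they remain satisfied by $(\alpha',\beta') = (\alpha,\beta) + (\eta,\zeta)$ for $(\eta,\zeta)$ small, so $\delta \equiv 0$ for every semistable Higgs bundle in $\MM(\alpha',\beta',d)$. By Remark \ref{rmk:DecompUVGenericWeights}, the decomposition $\UU = \C^p \otimes \OO(-a+1)$, $\VV = \C^q\otimes \OO(-a)$ obtained in Lemmas \ref{l:decompositionU} and \ref{l:decompositionV} also survives under a small perturbation. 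As in Subsection \ref{ss:ConstantType}, such a Higgs field is parametrized by an element $\mathbf{A} \in E(p,q,s-2)$. The additional data of the parabolic structure is, at each puncture $x_j$, a complete flag $F^j$ in $\UU_{x_j}\cong \C^p$ and $H^j$ in $\VV_{x_j}\cong \C^q$ (complete, because the perturbed weights $\alpha^j+\eta_i^j$ and $\beta^j+\zeta_i^j$ are strictly increasing in $i$). We thus obtain a natural map from $E_{\spadesuit}(p,q,s-2,s)$ to isomorphism classes of such Higgs bundles, which is surjective onto the semistable locus.

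The crux is then to match the parabolic stability condition with the $(\eta,\zeta)$-stability condition of Definition \ref{d:(epsilon,eta)stability}. Adapting the computation of Lemma \ref{l:EquivalenceStability}, for a $\gamma_{\mathbf{A}}\oplus 0$-invariant subbundle of the ``maximal degree'' form $(U\otimes \OO(-a+1)) \oplus (V\otimes \OO(-a))$ (with $A^j(U)\subset V$ for all $j$), a direct calculation yields
\[
\deg\bigl(\UU'_\bullet \oplus \VV'_\bullet\bigr) \;=\; \frac{(q\dim U - p\dim V)(1-\epsilon)}{p+q} \;+\; \vert\eta(U\cap \mathbf{F})\vert \;+\; \vert\zeta(V\cap \mathbf{H})\vert,
\]
where the flag-weight terms come from the induced parabolic structures. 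For any invariant subbundle \emph{not} of this special form, the inequality $\deg \UU' + \deg \VV' \leq -(a-1)\rank\UU' - a\rank\VV' -1$ from Lemma \ref{l:EquivalenceStability} gives an analogous bound with an extra $-1$, which for $(\eta,\zeta)$ small enough remains strictly negative (the original computation used $\epsilon > \tfrac{2pq-2p-2q}{2pq-p-q}$, and the parabolic correction term is $O(\Vert\eta\Vert + \Vert\zeta\Vert)$).

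The membership $(\alpha',\beta')\in \WW(s,p,q)$ enforces $p\alpha^j+q\beta^j+\sum_i\eta_i^j+\sum_i\zeta_i^j\in \N$ at each puncture, which for a small perturbation of the integral quantity $k^j = p\alpha^j+q\beta^j$ forces $\sum_i\eta_i^j+\sum_i\zeta_i^j=0$ for every $j$, hence $\Vert\eta\Vert+\Vert\zeta\Vert=0$. Comparing with Corollary \ref{c:FeatherStabilitySmallEtaZeta}, the three regimes $\dim V \gtreqless \tfrac{q}{p}\dim U$ produce exactly the same stability verdict: the first two are dominated by the leading $(1-\epsilon)$ term (which is bounded away from zero), and on the critical stratum $p\dim V = q\dim U$ the inequality $\vert\eta(U\cap \mathbf{F})\vert + \vert\zeta(V\cap \mathbf{H})\vert \leq 0$ coincides with the condition of Corollary \ref{c:FeatherStabilitySmallEtaZeta} thanks to $\Vert\eta\Vert+\Vert\zeta\Vert=0$.

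Finally, the automorphism group of $\UU\oplus \VV$ is exactly $G_{p,q} = \GL(p,\C)\times \GL(q,\C)$, acting on $E_\spadesuit(p,q,s-2,s)$ by the standard recipe recalled in Section \ref{ss:FeatheredKroneckerVarieties}, so two data $(\mathbf{A},\mathbf{F},\mathbf{H})$ and $(\mathbf{A}',\mathbf{F}',\mathbf{H}')$ give isomorphic parabolic Higgs bundles if and only if they lie in the same $G_{p,q}$-orbit. The universal property of the coarse moduli space $\MM(\alpha',\beta',d)$ therefore produces the desired algebraic isomorphism with $\RR_\spadesuit(p,q,s-2,s,\eta,\zeta)$. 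The principal obstacle is the stability comparison of the middle two paragraphs; in particular, tracking that the multiweight constraint produces precisely the cancellation needed to reconcile the parabolic Higgs stability with Corollary \ref{c:FeatherStabilitySmallEtaZeta}, and controlling the parabolic correction on subbundles of non-maximal degree when $\epsilon$ is close to its lower bound (notably when $p=q=2$).
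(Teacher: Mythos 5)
Your proposal is correct and follows essentially the same route as the paper's own proof: the same parametrization of the semistable locus by $E_\spadesuit(p,q,s-2,s)$ via Lemmas \ref{l:decompositionU}, \ref{l:decompositionV} and Remark \ref{rmk:DecompUVGenericWeights}, the same case-by-case comparison of parabolic stability with $(\eta,\zeta)$-stability built on the computation of Lemma \ref{l:EquivalenceStability}, and the same coarse-moduli/orbit argument to conclude. Your explicit observation that the integrality constraint in $\WW(s,p,q)$ forces $\sum_i\eta_i^j+\sum_i\zeta_i^j=0$ at each puncture (hence $\Vert\eta\Vert+\Vert\zeta\Vert=0$) is a point the paper uses without comment, and your worry about the non-maximal-degree subbundles is resolved exactly as you suggest, since $\epsilon$ is fixed strictly above its lower bound and so leaves a uniform negative gap absorbing the $O(\Vert\eta\Vert+\Vert\zeta\Vert)$ correction.
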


\begin{rmk}
The assumption that the perturbations of the weights $\eta^j$ and $\zeta^j$ are strictly increasing is only here so that the associated parabolic structures are given by complete flags. As in Section~\ref{ss:FeatheredKroneckerVarieties}, this is merely to avoid a proliferation of indices.
\end{rmk}

\begin{proof}[Proof of Proposition \ref{p:CompactComponentsGeneric}]
The compactness criterion \ref{c-compactnessscriterion} is clearly an open condition. Thus, for $(\alpha',\beta')$ close enough to $(\alpha,\beta)$, $\MM(\alpha',\beta',d)$ is compact. To see that it is non-empty, note that, when $s-2 > \frac{p}{q}+\frac{q}{p}$, there exists a stable parabolic Higgs bundle $(\UU_\bullet\oplus \VV_\bullet, \gamma\oplus 0)$ of type $(\alpha,\beta)$ with $\deg(\UU) - \deg(\VV) = d$. Let $(\UU'_\bullet\oplus \VV'_\bullet, \gamma\oplus 0)$ be a parabolic Higgs bundle with the same underlying holomorphic bundle $\UU\oplus \VV$ and a parabolic structure of type $(\alpha',\beta')$. Since stability is an open condition, $(\UU'_\bullet\oplus \VV'_\bullet, \gamma\oplus 0)$ is stable for $(\alpha',\beta')$ close enough to $(\alpha, \beta)$, proving that $\MM(\alpha',\beta',d)$ is non-empty. 
\end{proof}

\begin{proof}[Proof of Proposition \ref{p:FeatheredKroneckerGeneric}]
First, let us associate to a point in $E_\spadesuit(p,q,s-2,s)$ a parabolic $\SU(p,q)$-Higgs bundle of type $(\alpha',\beta')$. To do so, let us fix a basis of $H^0(\OO(s-3))$ so as to identify $E(p,q,s-2)$ with $\Hom(\C^p, \C^q \otimes H^0(\OO(s-3)))$.

Set $\UU= \C^p\otimes \OO(-a+1)$ and $\VV = \C^q \otimes \OO(-a)$. The canonical bases of $\C^p$ and $\C^q$ define bases of $\UU_x$ and $\VV_x$ at any point $x$, up to scaling. This induces canonical isomorphisms between the space of complete flags in $\C^p$ (resp. $\C^q$) and the space of complete flags on $\UU_x$ (resp. $\VV_x$). Given a point $(\mathbf F,\mathbf H)\in \FF^s \times \HH^s$, we now see the flag $F^j$ (resp. $H^j$) as a flag on $\UU_{x_j}$ (resp. $\VV_{x_j}$). We denote by $\UU_\bullet(\mathbf F, \alpha')$ and $\VV_\bullet(\mathbf H, \beta')$ the parabolic bundles with underlying bundles $\UU$ and $\VV$ respectively, and parabolic structure at $x_j$ given by the complete flags $F^j$ and $H^j$ and the weights ${\alpha'}^j$ and ${\beta'}^j$. Let us also denote respectively by $\UU_\bullet(\alpha)$ and $\VV_\bullet(\beta)$ the parabolic bundles with underlying bundle $\UU$ and $\VV$ and parabolic structure at $x_j$ given by the trivial flags and the constant weights $\alpha^j$ and $\beta^j$.

Finally, recall that we can associate to $\mathbf A \in E(p,q,s-2) \cong  \Hom(\C^p, \C^q) \otimes H^0(\OO(s-3))$ an element $\gamma_{\mathbf A} \in \Hh^0(\KK(D)\otimes \Hom(\UU,\VV))$.
We can thus associate to a point $(\mathbf A,\mathbf F,\mathbf H)\in E_\spadesuit(p,q,s-2,s)$ the parabolic $\SU(p,q)$-Higgs bundle 
\[\big(\UU_\bullet(\mathbf F,\alpha') \oplus \VV_\bullet(\mathbf H, \beta'), \gamma_{\mathbf A}\oplus 0\big),\]
identifying $E_\spadesuit(p,q,s-2,s)$ to an algebraic family of parabolic $\SU(p,q)$-Higgs bundles of type $(\alpha',\beta')$.\footnote{Importantly, since $\alpha^j < \beta^j$ for all $j$, then, for $\eta$ and $\zeta$ small enough, we still have ${\alpha'}_p^j < {\beta'}_1^j$, from which we obtain that $\KK(D)\otimes \Hom(\UU_\bullet(\mathbf F, \alpha') ,\VV_\bullet(\mathbf H, \beta')) = \KK(D) \otimes \Hom(\UU,\VV)$.}

Let $(\mathbf A, \mathbf F,\mathbf H)$ be a point in $E_\spadesuit(p,q,s-2,s)$. Let $\UU'$ and $\VV'$ be subbundles of $\UU$ and $\VV$ such that $\gamma_{\mathbf A}(\UU')\subset \VV'$. Denote respectively by $\UU'_\bullet(\alpha)$, $\UU'_\bullet(\mathbf F,\alpha')$, $\VV'_\bullet(\beta)$ and $\VV'_\bullet(\mathbf H,\beta')$ the induced parabolic subbundles of $\UU_\bullet(\alpha)$, $\UU_\bullet(\mathbf F,\alpha')$, $\VV_\bullet(\beta)$ and $\VV_\bullet(\mathbf H,\beta')$. Repeating the computations of the proof of Lemma \ref{l:EquivalenceStability}, we see that:
\begin{itemize}
\item[(1)] Either $\UU' = U'\otimes \OO(-a+1)$ and $\VV' = V'\otimes \OO(-a)$ where $U$ and $V$ are respectively subspaces of $\C^p$ and $\C^q$ such that $A^j(U')\subset V'$ for all $j\in \{1,\ldots, s-2\}$,
\item[(2)] Or $\deg(\UU'_\bullet(\alpha)) + \deg (\VV'_\bullet(\beta)) <0$ and thus $\deg(\UU'_\bullet(\mathbf F, \alpha')) + \deg (\VV'_\bullet(\mathbf H, \beta')) <0$ for $(\alpha',\beta')$ close enough\footnote{One can easily verify that everytime we ask for $(\alpha',\beta')$ ``close enough'' to $(\alpha,\beta)$, there actually is an explicit such choice which does not depend on $(\mathbf A,\mathbf F,\mathbf H)$, so that in the end we are indeed describing a neighborhood of $(\alpha,\beta)$.} to $(\alpha,\beta)$.
\end{itemize}

In case (1), we have the following :
\begin{itemize}
\item Either $\dim V' < \frac{q}{p} \dim U'$, in which case we have $\deg(\UU'_\bullet(\alpha)) + \deg (\VV'_\bullet(\beta)) >0$ and thus $\deg(\UU'_\bullet(\mathbf F, \alpha')) + \deg (\VV'_\bullet(\mathbf H, \beta')) >0$ for $(\alpha',\beta')$ close enough to $(\alpha,\beta)$,

\item Or $\dim V' > \frac{q}{p} \dim U'$, in which case we have $\deg(\UU'_\bullet(\alpha)) + \deg (\VV'_\bullet(\beta)) <0$ and the inequality $\deg(\UU'_\bullet(\mathbf F, \alpha')) + \deg (\VV'_\bullet(\mathbf H, \beta')) <0$ holds for $(\alpha',\beta')$ close enough to $(\alpha,\beta)$,

\item Or $\dim V' = \frac{q}{p} \dim U'$, in which case we have $\deg(\UU'_\bullet(\alpha)) + \deg (\VV'_\bullet(\beta)) = 0$ and
\begin{eqnarray*}
\deg(\UU'_\bullet(\mathbf F, \alpha')) + \deg (\VV'_\bullet(\mathbf H, \beta')) & = &\deg(\UU'_\bullet(\alpha)) + \deg (\VV'_\bullet(\beta)) + \vert \eta(U'\cap \mathbf F)\vert + \vert \zeta(V'\mathbf H)\vert \\
&=& \vert \eta(U'\cap \mathbf F)\vert + \vert\zeta(V'\cap \mathbf H)\vert ~.
\end{eqnarray*}
\end{itemize}
Comparing to Corollary \ref{c:FeatherStabilitySmallEtaZeta} and using $\norm{\eta} + \norm{\zeta} = 0$, we conclude that, for $(\eta,\zeta)$ small enough, the Higgs bundle $(\UU_\bullet(\mathbf F,\alpha')\oplus \VV_\bullet(\mathbf H,\beta'), \gamma_{\mathbf A} \oplus 0)$ is semistable if and only if $(\mathbf A,\mathbf F,\mathbf H)$ is $(\eta,\zeta)$-semistable. Since $\MM(\alpha',\beta')$ is a coarse moduli space, we thus have a morphism $\phi: E^{ss}_\spadesuit(p,q,s-2,s,\eta,\zeta)\to \MM(\alpha',\beta',d)$ for $(\alpha',\beta')$ close enough to $(\alpha,\beta)$.

Again, one easily verifies that two points in $E_\spadesuit(p,q,s-2,s)$ are associated to isomorphic parabolic $\SU(p,q)$-Higgs bundles if and only if they are in the same $G_{p,q}$-orbit. Therefore, the morphism $\phi$ factors through an injective morphism $\bar{\phi}: \RR(p,q,s-2,s, \eta,\zeta) \to \MM(\alpha',\beta',d)$. 

Finally, repeating the proofs of Lemmas \ref{l:decompositionU} and \ref{l:decompositionV} (see Remark \ref{rmk:DecompUVGenericWeights}), one obtains that, for $(\alpha',\beta')$ close enough to $(\alpha,\beta)$, every semistable $\SU(p,q)$-Higgs bundle in $\MM(\alpha,\beta,d)$ has underlying vector bundle $\OO(-a+1) \otimes \C^p \oplus \OO(-a)\otimes \C^q$. It follows that $\bar{\phi}$ is surjective.
\end{proof}

\section{Properties of our compact relative components}\label{s:PropertiesOfOurReps}

In this section we return to the ``representation variety'' side of the non-Abelian Hodge correspondence where we summarize the results obtained so far and prove Theorems \ref{t:ExistenceCompactComponents} and \ref{t-thm2} for $\SU(p,q)$, under the condition that $\frac{p}{q}+ \frac{q}{p} < s-2$.

\subsection{An open set of compact components} \label{ss:OpenSetCompactCpts}

Recall that, given a $\SU(p,q)$-multiweight $(\alpha,\beta)\in\mathcal{W}(s,p,q)$ (see Section \ref{s:su(p,q)higgsbbundles}), we denote by $h(\alpha,\beta)$ the tuple of matrices $(h_1,\ldots, h_s)$, where $h_j$ is diagonal with eigvenvalues
\[e^{2\pi i \alpha_1^j},\ldots, e^{2\pi i \alpha_p^j}, e^{2\pi i \beta_1^j}, \ldots, e^{2\pi i \beta_q^j}~.\]
Recall also from Subsection \ref{ss:NAH} that the non-Abelian Hodge correspondence gives a homeomorphism
\[\mathsf{NAH}: \MM(\alpha,\beta,d) \overset{\cong}{\to} \Rep_{h(\alpha,\beta)}^{\Vert\beta\Vert-\Vert\alpha\Vert-d}(\Sigma_{0,s}, \SU(p,q))~.\\ \]

Now, let us fix an integer $a\in\left[\frac{s+p}{p+q},\frac{(p+q-1)s+p}{p+q}\right]$ and let $(\alpha,\beta)$ be a constant multi-weight satisfying the hypotheses of Lemma \ref{l:decompositionV}. Through the non-Abelian Hodge correspondence, the results of the previous section translate into the following theorem:

\begin{theo}
Assume $s>2+ \frac{p}{q}+\frac{q}{p}$. Then there exists a neighborhood $W(\alpha,\beta)$ of $(\alpha,\beta)$ in $\WW(s,p,q)$ such that, for all $(\alpha',\beta')\in W(\alpha,\beta)$, the relative component
\[\Rep_{h(\alpha',\beta')}^{\Vert\beta'\Vert-\Vert\alpha'\Vert-d}\big(\Sigma_{0,s},\SU(p,q)\big)\]
is compact, non-empty, and contains an irreducible representation.
\end{theo}

It would be interesting to replace ``irreducible'' by ``Zariski dense'' in the above theorem. Though we strongly suspect that each of our compact components contains a Zariski dense representation, we do not know of a quick argument to rule out all other possible Zariski closures in general. We can nevertheless make the improvement for ``almost all'' weights $(\alpha',\beta')$, thanks to the following remark:

\begin{lem} \label{l:OpenSet}
The union 
\[\Omega = \bigsqcup_{(\alpha',\beta')\in W(\alpha,\beta)} \Rep_{h(\alpha',\beta')}^{\Vert\beta'\Vert-\Vert\alpha'\Vert-d}\big(\Sigma_{0,s},\SU(p,q)\big)\]
has non-empty interior in the (absolute) character variety $\Rep\big(\Sigma_{0,s},\SU(p,q)\big)$.
\end{lem}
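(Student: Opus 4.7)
The plan is to exhibit an open subset of $\Rep(\Sigma_{0,s}, \SU(p,q))$ contained in $\Omega$ by locating a point $[\rho_0] \in \Omega$ where the ``boundary-conjugacy'' map is a submersion, and then exploiting openness of submersions. First I would pick a weight $(\alpha_0, \beta_0) \in W(\alpha,\beta)$ satisfying all strict inequalities ${\alpha_0}^j_1 < \cdots < {\alpha_0}^j_p < {\beta_0}^j_1 < \cdots < {\beta_0}^j_q$ for every puncture $j$. Such a choice exists since $W(\alpha,\beta)$ is open in the manifold $\WW(s,p,q)$ of real dimension $s(p+q-1)$, while the strict-inequality locus is itself open and dense there. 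For this choice, every diagonal matrix $h(\alpha_0,\beta_0)_k$ has pairwise distinct eigenvalues, so it is a regular semisimple element of $\SU(p,q)$ whose conjugacy class is smooth of codimension $p+q-1$. By Proposition \ref{p:CompactComponentsGeneric} combined with Lemma \ref{l:EquivalenceStability} and Corollary \ref{c:ExistenceStablePoint}, $\MM(\alpha_0, \beta_0, d)$ contains a stable Higgs bundle, hence via the non-Abelian Hodge correspondence an irreducible representation $\rho_0$ lying in $\Omega$ with regular peripheral monodromies.

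Next I would consider the ``boundary-conjugacy'' map
\[ \chi \colon \Rep^{\mathrm{irr}}(\Sigma_{0,s}, \SU(p,q)) \longrightarrow \left(\SU(p,q)/\!\sim_{\mathrm{conj}}\right)^{s}~,\quad [\rho] \longmapsto \bigl([\rho(c_1)], \ldots, [\rho(c_s)]\bigr)~, \]
defined near $[\rho_0]$, where the target lies in the smooth regular stratum of real dimension $s(p+q-1)$. The key step is to verify that $\chi$ is a submersion at $[\rho_0]$. This is standard: the fibres of $\chi$ are precisely the relative character varieties, which are the symplectic leaves of the Goldman--Guruprasad--et al.\ Poisson structure on $\Rep$; at an irreducible representation with regular peripheral monodromies these leaves achieve the expected codimension. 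Concretely the dimensions match, $\dim_\R \Rep = (s-2)\dim_\R \SU(p,q)$ while a typical relative component has codimension $s(p+q-1)$, and a direct group-cohomology computation (using the long exact sequence relating $H^1(\Gamma_{0,s}, \mathrm{Ad}\,\rho_0)$ to the peripheral cohomologies) shows surjectivity of $d\chi_{[\rho_0]}$ when $\rho_0$ is irreducible. Hence $\chi$ is open on a neighborhood of $[\rho_0]$.

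Finally, the parametrization $(\alpha',\beta') \mapsto ([h(\alpha',\beta')_k])_{1 \leq k \leq s}$ is a local diffeomorphism from $\WW(s,p,q)$ onto the regular stratum of $(\SU(p,q)/\!\sim_{\mathrm{conj}})^s$ near $(\alpha_0,\beta_0)$, both sides having dimension $s(p+q-1)$. So $h(W(\alpha,\beta))$ contains an open neighborhood $V$ of $\chi([\rho_0])$ in the regular stratum, and pulling back, $\chi^{-1}(V)$ is open in $\Rep$, contains $[\rho_0]$, and is contained in $\Omega$ by construction. The main obstacle in this plan is the submersion property of $\chi$: it is folklore, but a fully self-contained justification either requires invoking the Poisson framework of \cite{GuruprasadETAl,Lawton} or unpacking the tangent-space computation at irreducible representations, which is where I expect to spend the most care.
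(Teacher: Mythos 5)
Your overall strategy (find one representation $[\rho_0]$ and show a whole neighborhood of it lands in $\Omega$) is the right one, and it is what the paper does, but the way you execute it hides the actual content of the lemma inside an unproved assertion and leaves out one constraint. The crucial point is your claim that $(\alpha',\beta') \mapsto ([h(\alpha',\beta')_k])_k$ maps onto a neighborhood of $\chi([\rho_0])$ \emph{in the space of conjugacy classes}, so that $V$ is genuinely open there and $\chi^{-1}(V)$ is open in $\Rep$. Matching dimensions of $\WW(s,p,q)$ and of the ``regular stratum'' does not give this: the regular semisimple stratum of $\SU(p,q)/\!\sim$ also contains non-elliptic classes (eigenvalue pairs $\lambda, 1/\overline{\lambda}$ with $|\lambda|\neq 1$ and isotropic eigenlines), and a priori these could accumulate on $[h(\alpha_0,\beta_0)_k]$, in which case the image of $h$ would not be open and $\chi^{-1}(V)$ would fail to be a neighborhood of $[\rho_0]$. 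What rules this out — and this is exactly the paper's (much shorter) proof — is the observation that the eigenlines of $\rho_0(c_j)$ are non-isotropic for $\mathbf h$, hence remain non-isotropic for any nearby $\mathbf h$-preserving element, forcing all eigenvalues of $\rho(c_j)$ to stay on the unit circle for $\rho$ near $\rho_0$. Once you have that, the entire submersion/Poisson-leaf apparatus is unnecessary: you never use openness of $\chi$, only openness of $\chi^{-1}(V)$, which follows from continuity of $\chi$ alone (and your base point $[\rho_0]$ already guarantees $\chi^{-1}(V)\neq\emptyset$). The tangent-space computation you flag as the main obstacle is in fact irrelevant to the statement.

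The second gap is the Toledo invariant. The set $\Omega$ is not the union of all representations of type $h(\alpha',\beta')$ with $(\alpha',\beta')\in W(\alpha,\beta)$; it is the union of the relative components with a \emph{prescribed} Toledo invariant. Your set $\chi^{-1}(V)$ only controls the peripheral conjugacy classes, so it contains representations with the correct boundary data but other values of the Toledo invariant, and ``contained in $\Omega$ by construction'' is false as stated. The fix is the one the paper uses: the Toledo invariant is continuous, and on representations of type $h(\alpha',\beta')$ it takes values in a discrete translate of a lattice depending continuously on $(\alpha',\beta')$, so its ``integer part'' $d'$ is locally constant and equals $d$ on a sufficiently small neighborhood of $[\rho_0]$. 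With the non-isotropy argument supplying openness and this continuity argument pinning down $d$, your proof closes up — but at that point it has collapsed to the paper's two-step elementary argument.
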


\begin{proof}
Let $\rho_0$ be a representation in $\Rep_{h(\alpha,\beta)}^{\Vert\beta\Vert-\Vert\alpha\Vert-d}(\Sigma_{0,s},\SU(p,q))$. We simply need to remark that, for every representation $\rho: \Gamma_{0,s}\to \SU(p,q)$ in a neighborhood of $\rho_0$ and every $j\in \{1,\ldots, s\}$, $\rho(c_j)$ has all its eigenvalues of modulus $1$. Indeed, $\rho_0(c_j)$ has two eigenspaces of respective dimension $p$ and $q$ in restriction to which the pseudo-Hermitian form $\mathbf h$ of $\C^{p,q}$ is respectively positive definite and negative definite. In particular, the eigendirections of $\rho_0(c_j)$ are ``far'' from the isotropic cone of equation $\mathbf h = 0$. Thus, for $\rho$ close enough to $\rho_0$, $\rho(c_j)$ does not have any isotropic eigenvector either. Since $\rho(c_j)$ preserves $\mathbf h$, every non-isotropic eigenvector has a corresponding eigvenvalue of modulus $1$.

In conclusion, every $\rho$ in a neighborhood of $\rho_0$ belongs to $\Rep_{h(\alpha',\beta')}(\Sigma_{0,s},\SU(p,q))$ for some $(\alpha',\beta')$ in a neighborhood of $(\alpha,\beta)$. Moreover, the Toledo invariant of $\rho$ has the form $d'+\norm{\alpha'}- \norm{\beta'}$ for some $d'\in \Z$. By continuity of the Toledo invariant, $d' = d$ for $\rho$ close enough to $\rho_0$. Hence $\rho$ belongs to $\Rep_{h(\alpha',\beta')}^{\Vert\beta'\Vert-\Vert\alpha'\Vert-d}\big(\Sigma_{0,s},\SU(p,q)\big)$.
\end{proof}

Now, the set of Zariski dense representations has full measure in $\Rep(\Gamma_{0,s}, \SU(p,q))$ (see \cite{ZariskiDenseRepresentations}). We thus obtain the corollary:

\begin{coro}
There is a set $W'$ of full measure in $W(\alpha,\beta)$ such that, for all $(\alpha',\beta')\in W'$, the relative component
\[\Rep_{h(\alpha',\beta')}^{\Vert\beta'\Vert-\Vert\alpha'\Vert-d}(\Gamma_{0,s},\SU(p,q))\]
contains a Zariski dense representation.
\end{coro}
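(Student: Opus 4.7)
The plan is a Fubini-type argument. Let $\pi : \Omega \to W(\alpha,\beta)$ denote the natural projection sending a representation $\rho$ to the unique multi-weight $(\alpha',\beta')$ such that the eigenvalues of each $\rho(c_j)$ are the exponentials $e^{2\pi i {\alpha'}_k^j}$ and $e^{2\pi i {\beta'}_l^j}$; by construction, the fiber $\pi^{-1}(\alpha',\beta')$ is exactly the relative component
\[
\Rep_{h(\alpha',\beta')}^{d+\norm{\alpha}-\norm{\beta}}\big(\Sigma_{0,s},\SU(p,q)\big).
\]
Under the assumption $s-2>p/q+q/p$, Proposition \ref{p:FeatheredKroneckerGeneric} identifies this fiber with a feathered Kronecker variety of positive dimension, so every fiber is compact and of positive Lebesgue measure.

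By Lemma \ref{l:OpenSet}, $\Omega$ is open, hence of positive measure, in the absolute character variety $\Rep(\Gamma_{0,s},\SU(p,q))$. The set $\mathcal{D}$ of non-Zariski-dense representations is contained in a countable union of proper real algebraic subvarieties (one for each conjugacy class of proper algebraic subgroup of $\SU(p,q)$), and so has Lebesgue measure zero by \cite{ZariskiDenseRepresentations}. Consequently $\Omega \cap \mathcal{D}$ has measure zero in $\Omega$.

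The next step is to apply Fubini to $\pi$. I would verify that at any irreducible representation $\rho\in\Omega$ with regular (i.e.\ distinct-eigenvalue) semisimple values $\rho(c_j)$ at each puncture, $\pi$ is a smooth submersion onto $W(\alpha,\beta)$: the eigenvalues at the $s$ punctures furnish $(p+q-1)s$ independent smooth functions near such a $\rho$, matching $\dim_\R W(\alpha,\beta)$, and combined with smoothness of $\Rep$ at an irreducible $\rho$ this endows $\Omega$ with the structure of a smooth fiber bundle over $W(\alpha,\beta)$ on a dense open subset. Fubini then yields a full-measure subset $W' \subset W(\alpha,\beta)$ such that for each $(\alpha',\beta') \in W'$ the intersection $\pi^{-1}(\alpha',\beta')\cap \mathcal{D}$ has Lebesgue measure zero in the fiber.

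For $(\alpha',\beta')\in W'$, the compact fiber has positive measure while its non-Zariski-dense part is Lebesgue-null; therefore the fiber contains (in fact, a full-measure subset of) Zariski-dense representations, which is the assertion. The main technical step is establishing the local product structure of $\pi$ required for Fubini; this is classical on the smooth locus of the character variety, and the complement of this locus is a proper algebraic subvariety, hence contributes nothing to the measure-theoretic argument.
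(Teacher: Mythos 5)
Your Fubini-type argument is exactly the deduction the paper leaves implicit: it invokes the full-measure statement for Zariski-dense representations from \cite{ZariskiDenseRepresentations}, combines it with the openness of $\Omega$ (Lemma \ref{l:OpenSet}), and disintegrates over the partition of $\Omega$ into relative components, so your proposal is correct and follows essentially the same route. The two technical points you rightly flag — that the projection to the multiweight is a submersion away from a null set (true at irreducible representations whose boundary holonomies have distinct eigenvalues, a co-null condition here) and that the fibers have positive dimension (they do, since for $s-2>\frac{p}{q}+\frac{q}{p}$ they contain a copy of $\mathcal{R}(p,q,s-2)$) — are the only substance the paper omits, and both check out.
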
 
 This concludes the proof of Theorem \ref{t:ExistenceCompactComponents} for $\SU(p,q)$, under the assumption that $\frac{p}{q}+\frac{q}{p} < s-2$.

\subsection{Proof of Theorem \ref{t-thm2}} \label{ss:ProofTheorem2}
Let us now describe some properties of the representations contained in these compact components and prove Theorem \ref{t-thm2}. Assume $s>2+\frac{p}{q}+\frac{q}{p}$, and let $(\alpha,\beta),~W(\alpha,\beta)$ and $\Omega$ be as in the previous subsection.

\begin{theo} \label{t:PropertiesRepresentationsPrecise}
Every representation $\rho\in \Omega$ has the following properties:
\begin{itemize}
\item[$(i)$] For every identification of $\Sigma_{0,s}$ with a punctured Riemann sphere, there is a $\rho$-equivariant holomorphic map from $\widetilde{\Sigma}_{0,s}$ to the symmetric space of $\SU(p,q)$.
\item[$(ii)$] For every homotopy class of simple closed curve $c$ in $\Sigma_{0,s}$,
the complex eigenvalues of $\rho(c)$ have modulus $1$.
\item[$(iii)$] More generally, for every $k\geq 0$, there is a constant $C(k)$ such that, if $c$ is the homotopy class of a closed curve with at most $k$ self-intersections, then the eigenvalues of $\rho(c)$ have modulus less than $C(k)$.
\end{itemize}
\end{theo}

We first describe the geometry of the symmetric space associated to $\SU(p,q)$. Recall that $\C^{p,q}$ is the usual $(p+q)$-dimensional complex vector space equipped with a signature $(p,q)$ Hermitian product $\h$. Let $\Y$ be the Grassmanian of $p$-dimensional (complex) linear subspaces of $\C^{p,q}$ on which the restriction of $\h$ is positive definite. Given $P\in\Y$, the stabilizer of $P$ in $\SU(p,q)$ is a maximal compact subgroup conjugated to $\K:=\textrm{S}(\text{U}(p)\times \text{U}(q))$, identifying $\Y$ with the symmetric space $\SU(p,q)/\K$. 

The Grassmanian $\Y$ carries two natural tautological bundles. Consider the trivial bundle 
\[E:=\Y\times\C^{p,q}\to \Y\]
and define the rank $p$ vector subbundle $U\subset E$ by $U_P=P$. Similarly, one has the subbundle $V\subset E$ defined by $V_P=P^\bot$. In particular, $E=U\oplus V$.

The tangent space $T_P\Y$ is identified with $\Hom_\C(P,P^\bot)$, so the tangent bundle $T\Y$ is isomorphic to the bundle $\Hom_\C(U,V)$. Its complexification is thus given by
\[T^\C\Y = T^{1,0}\Y \oplus T^{0,1}\Y = \Hom_\C(U,V)\oplus \Hom_\C(V,U),\]
where $T^{1,0}\Y$ and $T^{0,1}\Y$ are respectively the holomorphic and anti-holomorphic tangent bundle of $\Y$.

\smallskip
The non-Abelian Hodge correspondence relies on the existence, for any irreducible representation $\rho: \Gamma \to \SU(p,q)$ and Riemann surface structure $X$ on $\Sigma$, of a unique $\rho$-equivariant harmonic map $f: \widetilde{X} \to \Y$. More precisely, the $\SU(p,q)$-Higgs bundle $(\UU\oplus\VV,\gamma\oplus\delta)$ on $X$ associated to $\rho$ is 
\[\left\{ \begin{array}{lll}
\UU\oplus\VV & = & f^*(U\oplus V) \\
\gamma\oplus\beta & = & \partial f \end{array}\right.,\]
where $\partial f\in \Omega^{1,0}\big(X,\Hom_\C(\UU,\VV)\oplus\Hom_\C(\VV,\UU)\big)$ is the $(1,0)$-part of the complexification of $df$.

\smallskip
A map $f:\widetilde X \to \Y$ is holomorphic if and only if $\partial f$ takes value in $T^{1,0}\Y$. It follows that a $\SU(p,q)$-Higgs bundle $(\UU\oplus\VV,\gamma\oplus\delta)$ comes from an equivariant holomorphic map $f:\widetilde X \to \Y$ if and only if $\delta\equiv 0$. Since this is the case for all parabolic Higgs bundles satisfying our compactness criterion (Corollary \ref{c-compactnessscriterion}), Property $(i)$ follows.

\smallskip
Properties $(ii)$ and $(iii)$ follow from $(i)$ using the contraction property of holomorphic maps. We first introduce the Kobayashi metric on $\Y$. By a theorem of Harish-Chandra, $\Y$ is biholomorphic to a bounded domain in $\C^n$, and the Kobayashi distance $d_K$ between two points $x,y\in\Y$ can be defined as
\[d_K(x,y) =\inf\big\{d_P(a,b),~a,b\in \D,~g(a)=x,~g(b)=y \text{ for some } g:\D\to\Y \text{ holomorphic}  \big\}. \]
Here, $\D$ is the unit disk in $\C$ equipped with the Poincar\'e distance $d_P$.

The Kobayashi metric is a $\SU(p,q)$ invariant Finsler metric on $\Y$ which is thus bi-Lipschitz to the symmetric Riemannian metric $d_\Y$. Moreover, by definition, any holomorphic map $f: (\D,d_P) \to (\Y,d_K)$ is contracting.

\smallskip
Given an element $g\in\SU(p,q)$, define the \emph{translation length} of $g$ by 
\[t(g) = \inf\left\{d_\Y(x,gx),~ x\in\Y \right\},\]
where $d_\Y$ is the distance associated to the Killing metric. Up to a multiplicative constant, we have
\[t(g) = \sqrt{\sum_{i=1}^{p+q}(\log\vert\lambda_i\vert)^2},\]
where the $\lambda_i$ are the eigenvalues of $g$.

\smallskip
We are now ready to prove $(ii)$ and $(iii)$. Take $\rho: \Gamma_{0,s} \to \SU(p,q)$ a representation in $\Omega$ and let $\gamma\in\Gamma_{0,s}$ be a homotopy class of simple closed curve on $\Sigma_{0,s}$. For any complex structure $X$ on $\Sigma_{0,s}$, Property $(i)$ gives the existence of a $\rho$-equivariant holomorphic map $f: \widetilde X \to \Y$. Given a biholomorphism $\widetilde X \cong \D$, and $x\in\D$ a point on (a lift of) the geodesic homotopic to $\gamma$, we deduce
\begin{eqnarray*}
t(\rho(\gamma)) & \leq & d_\Y\big(f(x),f(\gamma x)\big) \\ 
& = & d_\Y\big(f(x),\rho(\gamma)f(x)\big) \\
& \leq & C d_K \big(f(x),\rho(\gamma)f(x)\big) \\
& \leq & C d_P(x,\gamma x)=Cl_X(\gamma)~,
\end{eqnarray*}
where $l_X(\gamma)$ is the hyperbolic length of the geodesic representing $\gamma$ and $C$ is a constant.

This inequality being true for any complex structure on $\Sigma_{0,s}$, we obtain that, up to a multiplicative constant, the translation length of $\rho(\gamma)$ is less than the infimum of the hyperbolic length of $\gamma$ over the Teichm\"uller space of $\Sigma_{0,s}$.

It is well-known that this infimum is $0$ when $\gamma$ is simple, proving $(ii)$. When $\gamma$ has $k$ self-intersections, this infimum is less than a constant depending only on $k$, implying $(iii)$. 

\smallskip
This concludes the proof of Theorem \ref{t:PropertiesRepresentationsPrecise} and that of Theorem \ref{t-thm2} for $\SU(p,q)$ with the assumption $\frac{p}{q}+ \frac{q}{p} < s-2$.

\section{More compact components}\label{s:MoreComponents}

In this section, we give ways of constructing more compact components : by tensorizing with a line bundle, by restricting to representations into other Hermitian Lie groups, and by restricting to subsurfaces. These examples show how far we are from a classification of compact connected components.

\subsection{Tensor product with a line bundle}

There is a notion of projective equivalence for parabolic $\SU(p,q)$-Higgs bundles.

 Under the non-Abelian Hodge correspondence, projectively equivalent polystable $\SU(p,q)$-Higgs bundles correspond to representations $\rho: \Gamma \to \SU(p,q)$ with the same image in $\PU(p,q)=\SU(p,q)/Z$, where \[Z=\left\{e^{\frac{2ik\pi}{p+q}}\Id,~k=0,\ldots,p+q-1 \right\}\cong \Z_{p+q}\] is the center of $\SU(p,q)$. In particular, given a representation $\rho: \Gamma \to \SU(p,q)$, the set of representations $\rho': \Gamma \to \SU(p,q)$ with the same image in $\PU(p,q)$ is parametrized by $H^1(\Sigma_{0,s},\Z_{p+q})$.

\begin{defi}
Two parabolic Higgs bundles $(\UU_\bullet\oplus \VV_\bullet,\gamma\oplus\delta)$ and $(\UU'_\bullet\oplus \VV'_\bullet,\gamma'\oplus\delta')$ are \textit{projectively equivalent} if there is a parabolic line bundle $\LL_\bullet$ such that $\UU'_\bullet=\LL_\bullet\otimes \UU_\bullet,~\VV'_\bullet=\LL_\bullet\otimes \VV_\bullet$ and $(\gamma',\delta')=(\gamma,\delta)$.
\end{defi}

The condition $\det(\UU'_\bullet)\otimes\det(\VV'_\bullet)=\det(\UU_\bullet)\otimes\det(\VV_\bullet)=\OO$ implies that $(\LL_\bullet)^{p+q}=\OO$. That is, two projectively equivalent parabolic $\SU(p,q)$-Higgs bundles differ by a (parabolic) $(p+q)$\textsuperscript{th} root of the trivial bundle. If we denote by $\mathcal{J}_X$ the group of degree $0$ parabolic line bundles over $X$, the set of $(p+q)$\textsuperscript{th} roots of the trivial bundle is the subgroup of $(p+q)$ torsion points and is denoted $\mathcal{J}_X[p+q]$.

\begin{lem}
The group $\mathcal{J}_X[p+q]$ of parabolic line bundles $\LL_\bullet$ on $X$ such that $(\LL_\bullet)^{p+q}=\OO$ is isomorphic to $H^1(\Sigma_{0,s},\Z_{p+q})$.
\end{lem}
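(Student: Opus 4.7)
The plan is to prove the lemma by a direct computation of both sides and matching explicit generators.

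First I would parametrize $\mathcal{J}_X[p+q]$ explicitly. By the classification of parabolic line bundles on $\ProjC{1}$ (see the example in Section~\ref{s:relativecharactervariety}), every parabolic line bundle can be written uniquely as $\LL_\bullet = \OO\bigl(l + \sum_{i=1}^s \alpha^i x_i\bigr)$ with $l \in \Z$ and $\alpha^i \in [0,1)$. Using the tensor product formula from the same example and iterating, I would compute $(\LL_\bullet)^{p+q}$: its parabolic weight at $x_i$ is the fractional part of $(p+q)\alpha^i$, and the degree of the underlying line bundle is $(p+q)l + \sum_i \lfloor (p+q)\alpha^i \rfloor$. The condition $(\LL_\bullet)^{p+q} = \OO$ forces $(p+q)\alpha^i \in \Z$ for all $i$, hence $\alpha^i = k_i/(p+q)$ for some $k_i \in \{0, \ldots, p+q-1\}$, and then the triviality of the underlying bundle gives $(p+q)l + \sum_i k_i = 0$.

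Next I would identify $\mathcal{J}_X[p+q]$ with the subgroup
\[N = \Bigl\{(k_1,\ldots,k_s) \in \Z_{p+q}^s : \textstyle\sum_i k_i \equiv 0 \pmod{p+q}\Bigr\}\]
of $\Z_{p+q}^s$. Indeed, the integer $l$ is uniquely determined by $(k_1, \ldots, k_s)$ and the constraint $\sum k_i \equiv 0 \pmod{p+q}$ is exactly what ensures $l \in \Z$. A direct check using the tensor product formula shows that the parabolic tensor product on $\mathcal{J}_X[p+q]$ corresponds to coordinatewise addition mod $p+q$ on the tuples $(k_i)$, so this bijection is a group isomorphism. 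The subgroup $N$ is visibly isomorphic to $\Z_{p+q}^{s-1}$ (project onto the first $s-1$ coordinates).

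Finally I would compute $H^1(\Sigma_{0,s}, \Z_{p+q})$ and match the two. The fundamental group $\Gamma_{0,s}$ is free of rank $s-1$ (the loops $c_1, \ldots, c_{s-1}$ freely generate it, as $c_s = (c_1 \cdots c_{s-1})^{-1}$), so $\Sigma_{0,s}$ is homotopy equivalent to a wedge of $s-1$ circles and $H^1(\Sigma_{0,s}, \Z_{p+q}) = \Hom(\Gamma_{0,s}, \Z_{p+q}) \cong \Z_{p+q}^{s-1}$. An explicit isomorphism $\mathcal{J}_X[p+q] \xrightarrow{\sim} H^1(\Sigma_{0,s}, \Z_{p+q})$ is then given by sending the parabolic line bundle with weights $k_i/(p+q)$ at $x_i$ to the homomorphism $c_i \mapsto k_i$; the relation $c_1 \cdots c_s = 1$ translates into $\sum k_i \equiv 0 \pmod{p+q}$, which is precisely the constraint cutting out $N$.

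There is no serious obstacle; the only minor subtlety is bookkeeping the interplay between the fractional weights and the degree of the underlying line bundle to verify that the tensor product on $\mathcal{J}_X[p+q]$ restricts to coordinatewise addition in $N$, so that the bijection is an isomorphism of groups rather than merely of sets.
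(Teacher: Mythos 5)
Your proof is correct and follows essentially the same route as the paper's: both identify an element of $\mathcal{J}_X[p+q]$ with a tuple $(k_1,\ldots,k_s)\in\{0,\ldots,p+q-1\}^s$ of numerators of the weights subject to $\sum_i k_i\equiv 0 \pmod{p+q}$ (which fixes the degree $l=-\sum_i k_i/(p+q)$), and match this against homomorphisms $\Gamma_{0,s}\to\Z_{p+q}$ via the presentation $\langle c_1,\ldots,c_s\mid c_1\cdots c_s=1\rangle$. The only difference is one of direction and emphasis: you start from the line-bundle side and verify the group law explicitly, while the paper constructs the map from $H^1$ and checks it is a surjective morphism.
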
 

\begin{proof}
Consider the presentation $\Gamma_{0,s}=\langle c_1,\ldots,c_s \vert c_1\cdots c_s=1\rangle$ of the fundamental group $\Gamma_{0,s}$ of $\Sigma_{0,s}$. An element $\varphi\in H^1(\Sigma_{0,s},\Z_{p+q})$ is thus a map $\varphi: \{c_1,\ldots,c_s\} \to \Z_{p+q}$ such that $\sum_{i=1}^s\varphi(c_i) = 0\in \Z_{p+q}$.

The map $\varphi$ has a unique representative $\hat\varphi : \{c_1,\ldots,c_s\} \to \{0,\ldots,p+q-1\}$ such that $\vert\hat\varphi\vert = \sum_{i=1}^s \hat\varphi(c_i) \equiv 0 \mod[p+q]$. Define the corresponding parabolic line bundle $\LL_\bullet^\varphi$ by
\[\LL_\bullet^\varphi = \OO\left(-\frac{\vert\hat\varphi\vert}{p+q} + \sum_{i=1}^s \frac{\hat\varphi(c_i)}{p+q}x_i\right).\]
One easily checks that $(\LL^\phi_\bullet)^{p+q}=\OO$ and that the map $\varphi \mapsto \LL_\bullet^\varphi$ is a group morphism.

Let us show that the map is surjective. Given $\LL_\bullet= \OO\big(l+\sum_{i=1}^s\alpha^ix_i \big)$ a $(p+q)^{th}$ root of $\OO$, the condition $(\LL_\bullet)^{p+q}=\OO$ implies that $\alpha^i=\frac{k_i}{p+q}$ for some $k_i\in\N$ and $l=-\frac{\vert k \vert}{p+q}\in\Z$. In particular, there is a unique $\varphi\in H^1\big(\Sigma_{0,s},\Z_{p+q}\big)$ with $\hat\varphi(c_i)=k_i$.
\end{proof}

We get the following corollary:

\begin{coro}
If $(\alpha,\beta)$ and $(\alpha',\beta')$ are two $\SU(p,q)$-multiweights such that, at any point $x_i$, we have 
\[{(\alpha'}^i_1-\alpha^i_1),~ \ldots~,~ ({\alpha'}^i_p- \alpha^i_p),~({\beta'}^i_1-\beta^i_1),~ \ldots~,~ ({\beta'}^i_q - \beta^i_q) \in\frac{1}{p+q}\Z~,\]
then there exists $\LL_\bullet\in \mathcal{J}_X[p+q]$ such that $\LL_\bullet\otimes-:~\MM(\alpha,\beta) \longrightarrow \MM(\alpha',\beta')$ is an isomorphism.
\end{coro}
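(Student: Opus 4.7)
The plan is to use the previous lemma to realize the uniform weight difference as tensoring by a specific parabolic line bundle of order $p+q$, check that this tensoring preserves the parabolic $\SU(p,q)$ Higgs bundle structure and is compatible with (semi)stability, and conclude that it induces an isomorphism of moduli spaces.

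First, denote by $\mu^i \in \frac{1}{p+q}\Z \cap [-1,1]$ the common difference ${\alpha'}^i_j - \alpha^i_j = {\beta'}^i_k - \beta^i_k$ at $x_i$. The $\SU(p,q)$-condition $\sum_j {\alpha'}^i_j + \sum_k {\beta'}^i_k \in \N$ together with the same condition for $(\alpha,\beta)$ forces $(p+q)\mu^i \in \Z$, so we can write $\mu^i = \frac{k_i}{p+q}$ for some $k_i \in \Z$. Up to adding an integer to $\mu^i$ (which just corresponds to tensoring by $\OO(x_i)$ and does not affect the parabolic class), we may assume $k_i \in \{0, \ldots, p+q-1\}$; moreover the integrality $\sum_j \alpha^i_j + \sum_k \beta^i_k \equiv \sum_j {\alpha'}^i_j + \sum_k {\beta'}^i_k \pmod{\Z}$ at each puncture is automatic, and summing over all $i$ gives $\sum_i k_i \equiv 0 \pmod{p+q}$. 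Hence $(k_1, \ldots, k_s)$ defines an element $\varphi \in H^1(\Sigma_{0,s}, \Z_{p+q})$, and the preceding lemma produces a parabolic line bundle $\LL^\varphi_\bullet = \OO\bigl(-\tfrac{|\hat\varphi|}{p+q} + \sum_i \tfrac{k_i}{p+q} x_i\bigr)$ satisfying $(\LL^\varphi_\bullet)^{p+q} = \OO$.

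Second, I would define the tensoring operation on objects: send $(\UU_\bullet \oplus \VV_\bullet, \gamma \oplus \delta)$ to $(\LL^\varphi_\bullet \otimes \UU_\bullet \oplus \LL^\varphi_\bullet \otimes \VV_\bullet, \gamma \oplus \delta)$, where $\gamma$ and $\delta$ naturally reinterpret as sections of $\KK(D) \otimes \Hom(\LL^\varphi_\bullet \otimes \UU_\bullet, \LL^\varphi_\bullet \otimes \VV_\bullet)$ etc. under the canonical identification $\Hom(\LL^\varphi_\bullet \otimes \UU_\bullet, \LL^\varphi_\bullet \otimes \VV_\bullet) = \Hom(\UU_\bullet, \VV_\bullet)$. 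I then verify the three structural conditions: (a) the parabolic weights transform exactly as prescribed, because tensoring by a parabolic line bundle with weight $\mu^i$ at $x_i$ shifts each weight by $\mu^i$ modulo $1$ (with the integer part absorbed into the degree of the underlying bundle); (b) the determinant condition is preserved since $\det(\LL^\varphi_\bullet \otimes \UU_\bullet) \otimes \det(\LL^\varphi_\bullet \otimes \VV_\bullet) = (\LL^\varphi_\bullet)^{p+q} \otimes \det(\UU_\bullet) \otimes \det(\VV_\bullet) = \OO$; and (c) parabolic subbundles correspond bijectively under tensoring and parabolic degree shifts by the same additive constant, so (semi/poly)stability is preserved.

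Third, I would note that the inverse operation is tensoring by $(\LL^\varphi_\bullet)^{-1}$, which has the same order-$(p+q)$ property (coming from $-\varphi \in H^1(\Sigma_{0,s}, \Z_{p+q})$). Hence the construction is an involution up to this inverse and gives a bijection between isomorphism classes of polystable Higgs bundles of type $(\alpha,\beta)$ and of type $(\alpha',\beta')$. Since the operation is algebraic and fits into families (tensoring is a morphism of algebraic stacks of Higgs bundles), it descends to an algebraic isomorphism $\LL^\varphi_\bullet \otimes \cdot : \MM(\alpha,\beta) \to \MM(\alpha',\beta')$ of coarse moduli spaces. The only minor obstacle is a bookkeeping check that the parabolic filtration of $\LL^\varphi_\bullet \otimes \UU_\bullet$ matches the prescribed weights $\alpha'^i_j$ exactly (including the reordering when $\mu^i + \alpha^i_j$ crosses $1$), but this is routine once one uses the normalization $E_{\alpha+1} = E_\alpha(-x)$ to absorb integer parts into the underlying line bundle.
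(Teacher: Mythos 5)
Your overall strategy is exactly the intended one --- the paper states this corollary with no proof at all, treating it as immediate from the lemma identifying $\mathcal{J}_X[p+q]$ with $H^1(\Sigma_{0,s},\Z_{p+q})$, and your second and third paragraphs (the tensoring operation, the determinant computation $(\LL_\bullet)^{p+q}\otimes\det(\UU_\bullet)\otimes\det(\VV_\bullet)=\OO$, preservation of parabolic degree and hence of (semi)stability because $\deg\LL_\bullet=0$, and invertibility via $\LL_\bullet^{-1}$) are the correct fleshing-out of that implicit argument.

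There is, however, a genuine gap in your first paragraph. You assert that ``summing over all $i$ gives $\sum_i k_i\equiv 0\pmod{p+q}$.'' This does not follow. The per-puncture integrality of $\sum_j\alpha_j^i+\sum_j\beta_j^i$ and of $\sum_j{\alpha'}_j^i+\sum_j{\beta'}_j^i$ only gives $(p+q)\mu^i\in\Z$ for each $i$, i.e.\ $k_i\in\Z$; summing yields $\sum_i k_i\in\Z$, which is no information. The congruence $\sum_i k_i\equiv 0\pmod{p+q}$ is equivalent to $\sum_i\mu^i\in\Z$, and this is an extra condition on the pair of multiweights, not a consequence of the hypotheses as stated. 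Concretely, for $p=q=1$ and $s=3$, take $\alpha=\beta=(0,0,0)$ and $\alpha'=\beta'=(\tfrac12,0,0)$: both are valid $\SU(1,1)$-multiweights, the differences at each puncture are equal and lie in $\tfrac12\Z$, yet $\sum_i k_i=1$ is odd, so no $\varphi\in H^1(\Sigma_{0,3},\Z_2)$ --- and hence no $\LL_\bullet\in\mathcal{J}_X[2]$ --- realizes this shift; indeed any parabolic line bundle with weight $\tfrac12$ at $x_1$ alone has square of odd underlying degree. So either the divisibility $\sum_i k_i\equiv 0\pmod{p+q}$ must be added as a hypothesis (it is what the corollary implicitly assumes in asking for $\LL_\bullet\in\mathcal{J}_X[p+q]$), or your derivation of it must be replaced; as written, the step is a non-sequitur and the element $\varphi$ you need may not exist. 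The remainder of your argument goes through once this point is secured.
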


As a result, given a compact conponent $\MM(\alpha,\beta,d)$, one gets a family of compact components 
\[\big\{\MM(\alpha',\beta',d')=\LL_\bullet\otimes\MM(\alpha,\beta,d),~\LL_\bullet\in \mathcal{J}_X[p+q] \big\}.\] These components correspond to different lifts of the same representations in $\PU(p,q)$.

Note that the condition $\alpha_i^1<\beta_i^q$ we imposed when looking for compact components is generally not preserved by taking the tensor product with $\LL_\bullet\in \mathcal{J}_X[p+q]$. This shows that the conditions of our compactness criterion (Corollary \ref{c-compactnessscriterion}) are too restrictive to account for all compact components.

\subsection{Other Hermitian Lie groups} \label{ss:MoreLieGroups}

We extend here our construction to other classical Hermitian Lie groups, namely $\Sp(2p,\R)$ and $\SO^*(2p)$.

\subsubsection{Compact components in $\Sp(2p,\R)$} Let $(V,\omega)$ be a real vector space of dimension $2p$ together with a symplectic form $\omega$. The group $\Sp(2p,\R)$ is the group of linear transformations of $V$ preserving $\omega$. 

The group $\Sp(2p,\R)$ acts on the complexification $V^\C$ of $V$ which is naturally equipped with the Hermitian form $\h$ defined by $\h(u,v)=i\omega^\C(u,\overline{v})$, where $\omega^\C$ is the $\C$-linear extension of $\omega$, and one can check that $\h$ has signature $(p,p)$. In particular, $(V^\C,\h)\cong \C^{p,p}$ and we get an embedding $\Sp(2p,\R)\hookrightarrow \SU(p,p)$.

Via this embedding, the relative $\Sp(2p,\R)$ character varieties are identified with closed subsets of the $\SU(p,p)$ character varieties. We will prove the following lemma:

\begin{lem} \label{l:RestrictionToSP(2p,R)}
Assume $s\geq 5$ is odd. Then we can find a subset $\Omega \subset \Rep(\Sigma_{0,s},\SU(p,p))$ as in Section~\ref{s:PropertiesOfOurReps} which contains a $\Sp(2p,\R)$ representation. 
\end{lem}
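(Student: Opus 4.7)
The plan is to translate the problem to Higgs bundles. An $\Sp(2p,\R)$-representation of $\Gamma_{0,s}$ corresponds to an $\SU(p,p)$-Higgs bundle equipped with an $\Sp(2p,\R)$-reduction: an isomorphism $\sigma: \VV_\bullet \to \UU_\bullet^\vee$ together with a symmetry condition on the Higgs field. Since the Higgs bundles in the compact components of Section~\ref{s:mainsection} already satisfy $\delta=0$, the condition reduces to asking that $\gamma$, viewed via $\sigma$ as a bilinear form on $\UU$, be symmetric. I will exhibit such a Higgs bundle inside $\MM(\alpha,\beta,d)$ for a suitable $(\alpha,\beta)$, so that the non-abelian Hodge correspondence produces the desired $\Sp(2p,\R)$-representation inside $\Omega$.

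For the isomorphism $\VV_\bullet \cong \UU_\bullet^\vee$ to exist, the underlying bundles must agree. Since $\UU = \C^p\otimes \OO(-a+1)$, one computes $\UU^\vee(-D) = (\C^p)^*\otimes \OO(a-1-s)$, which matches $\VV = \C^p\otimes \OO(-a)$ precisely when $a = (s+1)/2$; this is an integer exactly because $s$ is odd. The parabolic structures match provided $\beta^j = 1 - \alpha^j$ at every puncture. I then need to verify that, for the constant multi-weight $\alpha^j = \alpha$ slightly below $1/2$ and $\beta^j = 1 - \alpha$, the hypotheses of Lemma~\ref{l:decompositionV} hold: the value $a = (s+1)/2$ lies inside the required interval $[(s+p)/2p,\,((2p-1)s+p)/2p]$, and $\epsilon = s(1-2\alpha)$ can be placed inside the admissible range. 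This yields an open set $\Omega \subset \Rep(\Sigma_{0,s},\SU(p,p))$ as in Section~\ref{ss:OpenSetCompactCpts}.

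With these choices, Section~\ref{ss:ConstantType} identifies $\MM(\alpha,\beta,d)$ with the Kronecker variety $\RR(p,p,s-2)$, the Higgs fields being parametrized by tuples $\mathbf A = (A_1,\ldots,A_{s-2}) \in \Hom(\C^p,\C^p)^{s-2}$. After choosing compatible bases realizing $\sigma$, the $\Sp(2p,\R)$-symmetry condition becomes simply $A_j = A_j^T$ for all $j$. The crucial step is to find a King-stable symmetric tuple. Taking $A_1 = \Id_p$, the condition $A_1(U) = U \subset V$ forces $\dim V \geq \dim U$ automatically whenever $A_j(U)\subset V$ for all $j$; moreover, equality compels $U = V$ to be a common invariant subspace of $A_2,\ldots,A_{s-2}$. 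A standard dimension count then shows that the closed subvariety of $(s-3)$-tuples of symmetric $p\times p$ matrices sharing some $k$-dimensional invariant subspace has codimension $(s-4)k(p-k)$ in the space of all such tuples, which is strictly positive for $s\geq 5$ and $0 < k < p$. A generic symmetric choice of $(A_2,\ldots,A_{s-2})$ therefore yields a King-stable point; the case $p=1$ is trivial.

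By Lemma~\ref{l:EquivalenceStability}, the associated parabolic Higgs bundle is stable and lies in $\MM(\alpha,\beta,d)$; since it carries an $\Sp(2p,\R)$-reduction by construction, the non-abelian Hodge correspondence produces an irreducible representation $\Gamma_{0,s} \to \Sp(2p,\R) \subset \SU(p,p)$ in $\Omega$, proving the lemma. The main technical obstacle is the dimension count in the previous paragraph, which is precisely where the hypothesis $s \geq 5$ is used.
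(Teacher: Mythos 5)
Your construction follows the paper's proof almost step for step: pass to Higgs bundles, observe that the $\Sp(2p,\R)$-condition forces $\VV_\bullet\cong\UU_\bullet^\vee$, hence $a=(s+1)/2$ (where $s$ odd enters) and $\beta^j=1-\alpha^j$, check the hypotheses of Lemma~\ref{l:decompositionV}, and reduce everything to exhibiting a King-(semi)stable tuple of \emph{symmetric} matrices in $E(p,p,s-2)$. The only divergence is the endgame. The paper stops at semistability, which is immediate: any tuple of symmetric matrices one of which is invertible (e.g.\ $A_1=\Id_p$) is semistable, since $A_j(U)\subset V$ for $j=1$ already forces $\dim V\geq \dim U$. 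You instead push for a stable point by a genericity argument; this is a legitimate strengthening (a stable point is polystable, so the non-Abelian Hodge correspondence hands you an irreducible $\Sp(2p,\R)$-representation directly, whereas the paper is slightly quick about replacing its semistable point by a polystable representative that is still symplectic). However, your dimension count is not quite correct as stated: the locus of symmetric matrices preserving a fixed $k$-plane $W$ has codimension $k(p-k)$ in the space of symmetric matrices only when $W\cap W^{\perp}=\{0\}$ for the standard bilinear pairing; when $\dim(W\cap W^{\perp})=m$ the codimension drops by $m(m-1)/2$, so the formula ``codimension $(s-4)k(p-k)$'' for the bad locus is only valid on the open stratum of the Grassmannian. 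The argument can be repaired by stratifying according to $m$ -- the stratum of such $W$ has codimension $m(m+1)/2$, which more than compensates for the larger fibers -- so your conclusion stands, but as written the key step has a gap. The simplest fix is the paper's: note that semistability, which is what the lemma actually requires, is already guaranteed by $A_1=\Id_p$.
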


Let us explain first why the lemma implies Theorems \ref{t:ExistenceCompactComponents} and \ref{t-thm2} for $\Rep(\Sigma_{0,s},\Sp(2p,\R))$ with $s$ odd. Indeed, the intersection of $\Omega \cap \Rep(\Sigma_{0,s},\Sp(2p,\R))$ is an open subset of the $\Sp(2p,\R)$ character variety which thus contains a Zariski dense representation. Moreover, the intersection of $\Rep(\Sigma_{0,s},\Sp(2p,\R))$ with a compact component of a relative $\SU(p,p)$ character variety is a compact component of a relative $\Sp(2p,\R)$ character variety. Hence $\Omega \cap \Rep(\Sigma_{0,s},\Sp(2p,\R))$ is a union of compact relative components, proving Theorem \ref{t:ExistenceCompactComponents}. Finally, properties of Theorem \ref{t-thm2} are satisfied by all the representations in $\Omega$, including those in $\Rep(\Sigma_{0,s},\Sp(2p,\R))$ (for the third property, it is important that the embedding $\Sp(2p,\R) \hookrightarrow \SU(p,p)$ induces a holomorphic embedding of the symmetric spaces).

\begin{proof}[Proof of Lemma \ref{l:RestrictionToSP(2p,R)}]

We transit once again through the non-Abelian Hodge correspondence. One can indeed detect which parabolic $\SU(p,p)$-Higgs bundles give rise to a representation in $\Sp(2p,\R)$.

A parabolic $\Sp(2p,\R)$-Higgs bundle is a $\SU(p,p)$-Higgs bundle $(\UU_\bullet\oplus \VV_\bullet,\gamma\oplus\delta)$ such that 
\begin{itemize}
\item $\VV_\bullet=\UU_\bullet^\vee= \Hom(\UU_\bullet,\C)$, 
\item $\gamma \in H^0(\KK(D) \otimes \Hom(\UU_\bullet, \UU_\bullet^\vee))$ and $\delta \in H^0(\KK(D) \otimes \Hom(\UU_\bullet^\vee, \UU_\bullet))$ are self-adjoint.
\end{itemize}
In particular, if $\UU_\bullet$ has parabolic type $\alpha_k=(\alpha^k_1,\ldots,\alpha^k_p)$ at $x_k$, then $\VV_\bullet= \UU_\bullet^\vee$ has type $\beta^k=(\beta^k_1,\ldots,\beta^k_p)$ with $\beta^k_i=1-\alpha^k_{p+1-i}$. It follows that $\Vert\alpha\Vert+\Vert\beta\Vert=ps$.

\smallskip
Let us give explicitly an integer $a$ and a constant $\SU(p,p)$-multiweight $(\alpha,\beta)$ satisfying the hypotheses of Lemma \ref{l:decompositionV} such that $\MM(\alpha,\beta,d)$ contains a $\Sp(2p,\R)$-Higgs bundle.

Take $a = \frac{s+1}{2}$, and set $\alpha^j = \frac{1-\epsilon^j}{2}$ and $\beta^j = \frac{1+\epsilon^j}{2}$ where the $\epsilon^j \in (0,1)$ satisfy
\[\epsilon \equaldef \sum_{j=1}^s \epsilon^j \in \left(1-\frac{1}{p-1},1\right)~.\]
One easily verifies that these multiweights satisfy the hypotheses of Lemma \ref{l:decompositionV} (with $p=q$). 

Let $\LL_\bullet$ be the parabolic line bundle $\OO(-\frac{s-1}{2} +\sum_{j=1}^s \alpha^j x_j)$. Then $\LL_\bullet^\vee = \OO(-\frac{s+1}{2} +\sum_{j=1}^s \beta^j x_j)$. By Lemmas \ref{l:decompositionU} and \ref{l:decompositionV}, every parabolic $\SU(p,p)$-Higgs bundle in $\MM(\alpha,\beta,p)$ has the form $(\UU_\bullet \oplus \VV_\bullet, \gamma\oplus 0)$, where $\UU_\bullet = \LL_\bullet \otimes \C^p$ and $\VV_\bullet = \LL_\bullet^\vee \otimes \C^p$.

The standard bilinear pairing of $\C^p$ given by
\[(\mathbf z,\mathbf w) \mapsto z_1 w_1 + \cdots +z_pw_p\]
induces an isomorphism between $\VV_\bullet$ and $\UU_\bullet^\vee$. Finally, if $\mathbf A = (A_1,\ldots, A_{s-2})$ is a tuple of endomorphisms of $\C^p$ which are symmetric (with respect to the standard bilinear pairing), then the associated section 
$\gamma_{\mathbf A} \in H^0(\KK(D)\otimes \Hom(\UU_\bullet,\UU^\vee_\bullet))$ is self-adjoint. If, moreover, one of the $A_i$ is invertible, the parabolic bundle $(\UU_\bullet \oplus \VV_\bullet, \gamma_{\mathbf A} \oplus 0)$ is a semistable $\Sp(2p,\R)$-Higgs bundle, giving a $\Sp(2p,\R)$-point in $\MM(\alpha,\beta,p)$.

Let $\Omega$ be the subset of the $\SU(p,p)$ character variety given by
\[\Omega = \bigsqcup_{(\alpha',\beta')\in W(\alpha,\beta)} \Rep_{h(\alpha',\beta')}^p(\Sigma_{0,s},\SU(p,p))\]
where $W(\alpha,\beta)$ is an open neighborhood of $(\alpha,\beta)$ in $\WW(p,p,s)$. Then $\Omega$ contains a representation in $\Sp(2p,\R)$.
\end{proof}

Note that, for the above choice of $(\alpha,\beta)$, the relative component $\Rep_{h(\alpha,\beta)}^p(\Sigma_{0,s}, \Sp(2p,\R))$ could be described as a GIT quotient of the space $\mathcal S(p,s-2)$ of $s-2$-tuples of symmetric matrices of size $p$ by the action of $\GL(p,\C)$ given by 
\[g\cdot \mathbf S = (gS_1 g^T, \ldots, g S_{s-2} g^T)~.\]
One could also define a ``feathered version'' of this space to describe the nearby relative components. We don't know if these spaces have been studied in the litterature.

\subsubsection{Compact components in $\SO^*(2p)$} Recall that $\C^{p,p}$ is the complex vector space $\C^{2p}$ equipped with the signature $(p,p)$ Hermitian form $\h$ defined by
\[\h(z,w)= z_1\overline{w}_1+\cdots+z_p\overline{w}_p -z_{p+1}\overline{w}_{p+1}-\cdots-z_{2p}\overline{w}_{2p}.\]
Consider the $\C$-bilinear quadratic form $\q$ on $\C^{2q}$ defined by $\q(z)=\sum_{k=1}^p z_kz_{k+p}$. The group $\SO^*(2p)$ is the group of linear transofrmations of $\C^{2p}$ preserving both $\h$ and $\q$. In particular, $\SO^*(2p)\subset \SU(p,p)$.

Similarly to the $\Sp(2p,\R)$ case, one can see the $\SO^*(2p)$ character variety as a closed subset of the $\SU(p,p)$ character variety and Theorems \ref{t:ExistenceCompactComponents} and \ref{t-thm2} for $\SO^*(2p)$ and $s$ odd will follow from the following lemma:

\begin{lem} \label{l:RestrictionToSO*(2p)}
Assume $s\geq 5$ is odd. Then we can find a subset $\Omega \subset \Rep(\Sigma_{0,s},\SU(p,p))$ as in Section~\ref{s:PropertiesOfOurReps} which contains a $\SO^*(2p)$ representation. 
\end{lem}

\begin{proof}
The proof is very similar to the $\Sp(2p,\R)$ case and we only highlight the main differences between the two cases.

\smallskip
A parabolic $\SO^*(2p)$-Higgs bundle is a $\SU(p,p)$-Higgs bundle $(\UU_\bullet\oplus \VV_\bullet,\gamma\oplus\delta)$ such that 
\begin{itemize}
\item $\VV_\bullet=\UU_\bullet^\vee= \Hom(\UU_\bullet,\C)$, 
\item $\gamma \in H^0(\KK(D) \otimes \Hom(\UU_\bullet, \UU_\bullet^\vee))$ and $\delta \in H^0(\KK(D) \otimes \Hom(\UU_\bullet^\vee, \UU_\bullet))$ are \emph{antiself-adjoint}.
\end{itemize}

If we make the same choices of $(\alpha,\beta)$ as in the proof of Lemma \ref{l:RestrictionToSP(2p,R)}, $\SO^*(2p)$-Higgs bundles in $\MM(\alpha,\beta,p)$ will be of the form $(\UU_\bullet\oplus \UU_\bullet^\vee, \gamma_{\mathbf A} \oplus 0)$, where $\mathbf A =(A_1,\ldots, A_{s-2})$ is a tuple of \emph{antisymmetric} matrices of size $p$. We are thus left with the problem of finding a semistable point in $E(p,p,s-2)$ consisting of antisymmetric matrices.

\smallskip
This is slightly more elaborate than the case of symmetric matrices. If $p$ is even, any choice of antisymmetric matrices $A_1,\ldots, A_s$ with $A_1$ invertible will give a semistable point. If $p$ is odd, on the other side, every antisymmetric matrix has a non-trivial kernel. In particular, there is no stable point in $E(p,p,2)$ consisting of antisymmetric matrices by Lemma \ref{l:Semistability2Matrices}, so one needs at least $3$ matrices. 

Let us assume $p$ is odd and write $p = 2p'+1$. Let $(e_0,\ldots e_{2p'})$ denote the canonical basis of $\C^p$. Define $A_1, A_2, A_3$ by
\begin{itemize}
\item $A_1(e_0) = 0$ and for $1\leq k \leq p'$, $A_1(e_{2k-1}) = e_{2k}$ and $A_1(e_{2k}) = - e_{2k-1}$,
\item $A_2(e_0) = 0$ and for $1\leq k \leq p'$, $A_1(e_{2k-1}) = k e_{2k}$ and $A_1(e_{2k}) = - k e_{2k-1}$,
\item $A_3(e_0) = - \sum_{k=1}^{p'} e_{2k}$ and for $1\leq k \leq p'$, $A_1(e_{2k-1}) = A'_3(e_{2k-1})$ and $A_1(e_{2k}) = A'_3(e_{2k}) + e_0$,
where $A'_3$ is an invertible antisymmetric endomorphism of $\mathrm{Span}(e_1,\ldots, e_{2p'})$ such that, for every subset $I$ of $\{1,\ldots, p'\}$, the image of $\mathrm{Span}(e_{2i-1})_{i\in I}$ is not contained in $\mathrm{Span}(e_{2i})_{i\in I}$. Note that the set of all such $A'_3$ is the complement of a finite union of proper algebraic subsets of the space of antisymmetric matrices of size $2p'$. In particular, it is non-empty.
\end{itemize}
We leave to the reader the verification that $A_1$, $A_2$ and $A_3$ are antisymmetric. 

We claim that $(A_1,A_2,A_3)$ is a semistable point of $E(p,p,3)$. By contradiction, assume that there exists $V$ and $W \subset \C^p$ such that $A_i(V) \subset W$ for $i = 1,2,3$ and $\dim W < \dim V$. Since the kernel of $A_1$ is $\C e_0$ we have that $V$ contains $e_0$ and $\dim W = \dim V -1$. We can thus write $V = \C e_0 \oplus V'$ where $V' \subset e_0^\perp$. Since $e_0$ is also the kernel of $A_2$, we have $W = A_1(V') = A_2(V')$.

Remark that $A_1$ and $A_2$ preserve $e_0^\perp$. Let $A'_1$ and $A'_2$ be the induced endomorphisms of $e_0^\perp$. Then $V'$ is invariant by ${A'_1}^{-1}A_2$ and it is thus spanned by eigenvectors of ${A'_1}^{-1}A_2$. Moreover, we also have $A_3(V') \subset W \subset e_0^\perp$. Since $\langle A_3(e_{2k}), e_0\rangle = 1$, we must have $V' = \mathrm{Span}(e_{2i-1})_{i\in I}$ for some subset $I$ of $\{1,\ldots, p'\}$. But we then have $A_3(V') \subset A_1(V') = \mathrm{Span}(e_{2i})_{i\in I}$, which contradicts the condition on $A_3$.

Therefore $(A_1,A_2,A_3)$ is a semistable point of $E(p,p,3)$ and every choice of antisymmetric matrices $(A_i)_{4\leq i \leq s-2}$ will give a semistable point in $E(p,p,s-2)$ corresponding to a semistable $\SO^*(2p)$ point in $\MM(\alpha,\beta,p)$.
\end{proof}

Similarly to the $\Sp(2p,\R)$ case, one could parametrize the coresponding compact $\SO^*(2p)$ relative components by a GIT quotient of the space of tuples of antisymmetric matrices of size $p$ by the action of $\GL(p,\C)$.

\subsubsection{Other Lie groups of Hermitian type} \label{sss:SO(2,n)}

We expect compact components in relative character varieties of punctured spheres to exist for all Hermitian Lie groups. The simple Hermitian Lie groups that are not covered by the present paper are the $\SO(2,p), p\geq 5$ and some real forms of $\mathrm E_6$ and $\mathrm E_7$.\footnote{The case $p\leq 4$ are covered, thanks to the following exceptional isogenies: $\SO(2,1)\simeq \SL(2,\R)$, $\SO(2,2)\simeq \SL(2,\R)\times \SL(2,\R)$, $\SO(2,3)\simeq \Sp(4,\R)$ and $\SO(2,4) \simeq \SU(2,2)$.}

Note that the standard embedding $\iota: \SO(2,p)\hookrightarrow \SU(2,p)$ induces a totally real embedding of symmetric spaces. Hence any representation into $\rho: \Gamma_{0,s} \to \SO(2,p)$ composed with $\iota$  has Toledo invariant $0$, and there cannot be a non-constant $\iota\circ \rho$-equivariant holomorphic or anti-holomorphic map from $\tilde \Sigma_{0,s}$ to the symmetric space of $\SU(2,p)$. Thus our domain $\Omega$ of Section \ref{s:PropertiesOfOurReps} in the $\SU(2,p)$ character variety does not contain any representation into $\SO(2,p)$.

In order to embed holomorphically the symmetric space of $\SO(2,p)$ into that of some $\SU(p',p')$, one should consider the spin representation
\[\Spin(2,p)\to \SU\left(2^{\lfloor \frac{p-1}{2}\rfloor}, 2^{\lfloor \frac{p-1}{2}\rfloor}\right)~.\]
A careful analysis of the spin representation might show that the domain $\Omega\subset \Rep(\Gamma_{0,s},  \SU(2^{\lfloor \frac{p-1}{2}\rfloor}, 2^{\lfloor \frac{p-1}{2}\rfloor}))$ constructed in Theorem \ref{t:ExistenceCompactComponents} contains representations that factor through $\Spin(2,p)$.

Alternatively, one could look directly at parabolic $\SO(2,p)$-Higgs bundles and carry the same analysis as in Section \ref{s:mainsection} in this new context (\emph{i.e.} find a small range of weights and Toledo invariant which force the Higgs field to be nilpotent). Though some computations show that this approach is promising, we felt that it would make the paper unnecessarily long.

\subsection{Restriction to a subsurface} \label{ss:RestrictionSubsurface}

So far, we only proved Theorems \ref{t:ExistenceCompactComponents} and~\ref{t-thm2} in the following cases:
\begin{itemize}
\item $G= \SU(p,q)$ and $s > \frac{p}{q}+ \frac{q}{p}+2$,
\item $G= \Sp(2p,\R)$ and $s\geq 5$ odd,
\item $G = \SO^*(2p)$ and $s\geq 5$ odd.
\end{itemize}

Here we deduce Theorems \ref{t:ExistenceCompactComponents} and \ref{t-thm2} for any value of $s\geq 3$, by arguing that one can always cut a sphere with a lot of punctures into spheres with fewer punctures and restrict representations in a compact component to these subsurfaces.

\smallskip
To be more precise, let $G$ be one of the Lie groups $\SU(p,q)$, $\Sp(2p,\R)$ or $\SO^*(2p)$. Let $s_0$ be greater than $\frac{p}{q}+\frac{q}{p}+2$ if $G=\SU(p,q)$ and $s_0$ be odd and greater than $4$ if $G=\Sp(2p,\R)$ or $\SO^*(2p)$. 

We will say that an open domain $\Omega$ of the character variety $\Rep(\Sigma_{0,s_0},G)$ is \emph{foliated by compact relative components} if for every representation $\rho$ in $\Omega$, the connected component of $\rho$ in its relative character variety is compact and contained in $\Omega$. We proved in Sections \ref{ss:OpenSetCompactCpts} and \ref{ss:MoreLieGroups} that such a domain exists. 

Now, let $b$ be an oriented simple closed curve that separates $\Sigma_{0,s_0}$ into a sphere with $s$ holes $\Sigma'$ and a sphere with $s_0-s+2$ holes $\Sigma''$. We choose a point on $b$ as a basepoint for our fundamental groups, so as to identify $\pi_1(\Sigma')$ and $\pi_1(\Sigma'')$ with subgroups of $\pi_1(\Sigma)$.

We have the following presentations:
\[\Gamma_{0,s_0} = \langle c_1, \ldots, c_{s_0} \mid c_1 \ldots c_{s_0} = 1\rangle~,\]
\[\Gamma_{0,s} \cong \pi_1(\Sigma') = \langle c_1, \ldots , c_{s-1}, b \mid c_1 \ldots c_{s-1} b = 1\rangle~,\]
\[\Gamma_{0,s_0-s+2} \cong \pi_1(\Sigma'') = \langle c_s, \ldots, c_{s_0}, b \mid b^{-1} c_s \ldots c_{s_0} = 1\rangle~.\\ \]

There is a well-defined restriction map
\[\mathrm{Res}:\left\{\begin{array}{rcl} \Rep(\Sigma_{0,s_0},G) & \longrightarrow & \Rep(\Sigma_{0,s},G)\\
 \left[ \rho\right] & \mapsto & \left[ \rho_{\vert \pi_1(\Sigma')}\right]
\end{array}\right. ,
\]
which is clearly open.

Let $\Omega$ be an open domain in $\Rep(\Sigma_{0,s_0},G)$ foliated by compact relative components. Let $\Omega'$ be the subset of $\Omega$ consisting of representations $\rho$ such that $\rho(b)$ is diagonalizable with distinct eigenvalues. Note that $\Omega'$ is open and non-empty since the map $\rho \mapsto \rho(b)$ is open. Theorem \ref{t:ExistenceCompactComponents} in full generality now follows from the following proposition:

\begin{prop}
The domain $\mathrm{Res}(\Omega') \subset \Rep(\Sigma_{0,s},G)$ is foliated by compact connected components.
\end{prop}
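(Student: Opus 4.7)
The plan is as follows. Fix $\rho \in \Omega'$, set $\rho' := \mathrm{Res}(\rho)$, and let $\mathcal{C}$ denote the connected component of $[\rho']$ in the relative character variety $\Rep_{h'}(\Sigma_{0,s}, G)$, where $h' = (\rho(c_1),\ldots,\rho(c_{s-1}),\rho(b))$. Let $K$ be the compact connected component of $[\rho]$ in $\Rep_h(\Sigma_{0,s_0}, G)$; by the hypothesis on $\Omega$ we have $K \subset \Omega$. Since $\rho(b)$ has distinct eigenvalues, $C(\rho(b))$ coincides with the conjugation orbit $G \cdot \rho(b)$, which is closed and smooth, and the orbit map $G \to G \cdot \rho(b)$ is a principal bundle over the centralizer torus $Z_G(\rho(b))$; in particular it admits local continuous sections.

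The strategy is to relate $\mathcal{C}$ to a subset of $K$ via van Kampen's theorem and the decomposition $\Gamma_{0,s_0} = \pi_1(\Sigma') *_{\langle b\rangle} \pi_1(\Sigma'')$. Set
\[ N := \{[\sigma] \in K : \sigma(b) \in G \cdot \rho(b)\}, \]
a closed subset of the compact set $K$, and let $N_\rho$ denote the connected component of $[\rho]$ in $N$. Then $\mathrm{Res}(N) \subset \Rep_{h'}(\Sigma_{0,s}, G)$, and since for every $[\sigma] \in N$ the element $\sigma(b)$ is conjugate to $\rho(b)$ (hence has distinct eigenvalues), one has $N \subset \Omega'$. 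Consequently $\mathrm{Res}(N_\rho)$ is a compact connected subset of $\Rep_{h'}$ containing $[\rho']$, so $\mathrm{Res}(N_\rho) \subset \mathcal{C}$ and $\mathrm{Res}(N_\rho) \subset \mathrm{Res}(\Omega')$.

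The heart of the argument is the reverse inclusion $\mathcal{C} \subset \mathrm{Res}(N_\rho)$. As $\mathrm{Res}(N_\rho)$ is non-empty and closed in $\mathcal{C}$, by connectedness of $\mathcal{C}$ it suffices to prove openness. Pick $[\tilde\sigma] \in N_\rho$ with $\mathrm{Res}[\tilde\sigma] = [\sigma']$, and let $[\sigma'_1] \in \mathcal{C}$ be close to $[\sigma']$. Locally lifting to $\Hom(\Gamma_{0,s},G)$ and using local sections of the orbit map $g \mapsto g\rho(b)g^{-1}$, one obtains a continuous family $(\sigma'_t)_{t\in[0,1]}$ in $\Hom(\Gamma_{0,s},G)$ connecting $\tilde\sigma|_{\pi_1(\Sigma')}$ to a representative of $[\sigma'_1]$, together with elements $g_t \in G$ with $g_0 = e$ and $g_t \sigma'_t(b) g_t^{-1} = \rho(b)$ throughout. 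Van Kampen then assembles $g_t \sigma'_t g_t^{-1}$ and $\rho|_{\pi_1(\Sigma'')}$ into a continuous family $\tilde\sigma_t \in \Hom(\Gamma_{0,s_0}, G)$ with $\tilde\sigma_t(b) = \rho(b)$. The induced path $[\tilde\sigma_t]$ in $\Rep_h(\Sigma_{0,s_0},G)$ starts at $[\tilde\sigma] \in K$; being connected, it stays in $K$. Its values on $b$ all lie in $G \cdot \rho(b)$, so it lies entirely in $N$; being connected in $N$ and starting in $N_\rho$, it stays in $N_\rho$. Hence $[\tilde\sigma_1] \in N_\rho$ with $\mathrm{Res}[\tilde\sigma_1] = [\sigma'_1]$, proving openness.

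The main obstacle is this gluing construction and its continuity, which rests entirely on the regular semisimplicity of $\rho(b)$: this is what makes $G\cdot \rho(b)$ closed, makes the orbit map admit local sections, and allows the van Kampen gluing to be performed continuously as $[\sigma']$ varies in $\mathcal{C}$. It is precisely the role played by the open subset $\Omega' \subset \Omega$.
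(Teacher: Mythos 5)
Your overall strategy is the same as the paper's: fix the restriction of $\rho$ to $\pi_1(\Sigma'')$, use the regular semisimplicity of $\rho(b)$ to conjugate the $b$-images back to $\rho(b)$, glue over $b$ to produce representations of $\Gamma_{0,s_0}$ lying in the compact relative component $K$, and pull compactness back through $\mathrm{Res}$. The packaging differs: the paper lifts an entire path from the basepoint $[\rho']$ to an arbitrary $[\rho_1']\in\mathcal C$ and glues along it (so that at $t=0$ the glued representation is literally $\rho$), whereas you run an open--closed argument on $\mathcal C$ with only local lifts. Your version has the merit of avoiding a global path-lifting from the quotient to $\Hom(\Gamma_{0,s},G)$, but it contains one genuine error.

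The error is in the openness step. You glue $g_t\sigma_t'g_t^{-1}$ with $\rho|_{\pi_1(\Sigma'')}$ and assert that the resulting path $[\tilde\sigma_t]$ ``starts at $[\tilde\sigma]\in K$.'' It does not: at $t=0$ the glued representation restricts to $\tilde\sigma|_{\pi_1(\Sigma')}$ on $\Sigma'$ but to $\rho|_{\pi_1(\Sigma'')}$ on $\Sigma''$, and a general $[\tilde\sigma]\in N_\rho$ has $\tilde\sigma|_{\pi_1(\Sigma'')}$ unrelated to $\rho|_{\pi_1(\Sigma'')}$, so $[\tilde\sigma_0]\neq[\tilde\sigma]$ in general. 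Since your whole chain of conclusions ($[\tilde\sigma_t]$ stays in $K$, hence in $N$, hence in $N_\rho$) rests on the path starting inside $N_\rho$, the argument as written does not close. The repair is easy and stays within your framework: normalize the representative $\tilde\sigma$ so that $\tilde\sigma(b)=\rho(b)$, and glue $g_t\sigma_t'g_t^{-1}$ with $\tilde\sigma|_{\pi_1(\Sigma'')}$ rather than with $\rho|_{\pi_1(\Sigma'')}$ (this is still consistent over $\langle b\rangle$ and still lands in $\Rep_h$ because $\tilde\sigma\in K\subset\Rep_h$); then $\tilde\sigma_0=\tilde\sigma$ and the connectedness argument goes through. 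Alternatively, adopt the paper's route and perform the gluing along a path issued from the basepoint $[\rho']$, where the $t=0$ gluing recovers $\rho$ itself. Either way, you should also record the small point that membership of $\sigma(b)$ in $G\cdot\rho(b)$ is a closed, saturated condition (using that $C(\rho(b))=G\cdot\rho(b)$ is the unique closed orbit), so that $N$ is indeed closed in $K$.
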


\begin{proof}
Let $\rho_0$ be a representation in $\Omega'$. Denote respectively by $\rho_0'$ and $\rho_0''$ the restrictions of $\rho_0$ to $\pi_1(\Sigma')$ and $\pi_1(\Sigma'')$. Define
\[h= (\rho_0(c_1), \ldots, \rho_0(c_{s_0}))~,\]
\[h' = (\rho_0(c_1),\ldots, \rho_0(c_{s-1}), \rho(b))\]
and
\[h'' = (\rho(b)^{-1}, \rho_0(c_s), \ldots, \rho_0(c_{s_0}))~.\]
We want to prove that the connected component of $\rho_0'$ in $\Rep_{h'}(\Sigma_{0,s},G)$ is compact and contained in $\mathrm{Res}(\Omega')$.

Let $K$ denote the subset of $\Rep_{h}(\Gamma_{0,s},\SU(p,q)) \cap \Omega$ consisting of representations $\rho$ such that $\rho(b)$ is conjugate to $\rho_0(b)$.

Since $\rho_0(b)$ is diagonalizable, its conjugation orbit is closed. The set $K$ is therefore closed in $\Rep_{h}(\Gamma_{0,s},\SU(p,q)) \cap \Omega$, hence compact. It is moreover contained in $\Omega'$.

By definition of $K$, the restriction map $\mathrm{Res}$ sends $K$ to $\to \Rep_{h'}(\Gamma_{0,s},G)$. Let us prove that its image contains the connected component of $[\rho_0']$. Let $[\rho_1']$ be a point in this connected component and let $[\rho_t']_{0\leq t\leq 1}$ be a continuous path from $[\rho_0']$ to $[\rho_1']$. For each $t$, we know that $\rho_t'(b)$ belongs to $C(\rho_0(b))$. Since $\rho_0(b)$ has distinct eigenvalues, this implies that $\rho_t'(b)$ and $\rho_0(b)$ are conjugate. Let $g_t\in G$ be such that
\[g_t\rho'(b) g_t^{-1} = \rho_0(b)~.\]
We can assume that $g_t$ varies continuously with $t$.
Define
\[\rho_t: \Gamma_{0,s_0} \to \SU(p,q)\]
such that
\[{\rho_t}_{\mid \Gamma'} = g_t\rho' g_t^{-1}\]
and
\[{\rho_t}_{\mid \Gamma''} = \rho_0''~.\]
By construction, we have $[\rho_t]\in \Rep_h(\Sigma_{0,s_0},G)$ for all $t$. Since $\Omega$ contains the connected component of $\rho_0$ in its relative character variety, we deduce that $\rho_t\in \Omega$ for all $t$. Moreover, $\rho_t(b)$ is conjugate to $\rho_0(b)$. Hence $[\rho_t]$ belongs to $K$. Finally,
${\rho_t}_{\vert\Sigma'}$ is conjugate to $\rho_t'$. Thus $[\rho_t']$ belongs to $\mathrm{Res}(K)\subset \mathrm{Res}(\Omega')$.
\end{proof}

Finally, to prove Theorem \ref{t-thm2} in full generality, we prove the following:
\begin{prop}
Every representation $\rho \in \mathrm{Res}(\Omega)$ satisfies the  properties of Theorem \ref{t:PropertiesRepresentationsPrecise}.
\end{prop}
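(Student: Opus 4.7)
The plan is to transfer each of the three properties of Theorem \ref{t:PropertiesRepresentationsPrecise} from $\tilde\rho\in\Omega$ to its restriction $\rho = \tilde\rho_{\vert \pi_1(\Sigma')}$, using the inclusion $\pi_1(\Sigma')\hookrightarrow \Gamma_{0,s_0}$. Properties $(ii)$ and $(iii)$ require essentially no new work: the inclusion $\Sigma'\hookrightarrow \Sigma_{0,s_0}$ sends any representative of a free homotopy class of closed curve in $\Sigma_{0,s}\cong \Sigma'$ to a representative of the corresponding class in $\Sigma_{0,s_0}$ with at most as many self-intersections. Since $\rho(c)=\tilde\rho(c)$ for every $c\in\pi_1(\Sigma')$, the eigenvalue bounds of Theorem \ref{t:PropertiesRepresentationsPrecise}$(ii)$-$(iii)$ applied to $\tilde\rho$ transfer directly to $\rho$.

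For property $(i)$, the idea is to realize any given complex structure on $\Sigma_{0,s}$ as sharing a common holomorphic subsurface with a suitable complex structure on $\Sigma_{0,s_0}$. Concretely, writing $X = \ProjC{1}\setminus\{x_1,\ldots, x_{s-1}, y\}$ with $y$ the puncture whose small loop is identified with $b$, I would choose a small open disk $D\subset \ProjC{1}$ around $y$ disjoint from the other $x_i$, together with $s_0-s+1$ distinct points $z_1,\ldots, z_{s_0-s+1}\in D\setminus\{y\}$. Setting
\[X_0 := \ProjC{1}\setminus\{x_1,\ldots, x_{s-1}, z_1,\ldots, z_{s_0-s+1}\},\]
the curve $b$ is realized in $X_0$ as $\partial D$, and $\Sigma' := X\setminus \overline{D} = X_0 \setminus \overline{D}$ becomes a common holomorphic subsurface of $X$ and $X_0$.

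Applying property $(i)$ to $\tilde\rho$ and the complex structure $X_0$ then produces a $\tilde\rho$-equivariant holomorphic map $\tilde f_0: \widetilde{X_0}\to Y$, where $Y$ denotes the symmetric space of $\SU(p,q)$. Since $\pi_1(\Sigma')\hookrightarrow \pi_1(X_0)$ is an embedding, every connected component of the preimage of $\Sigma'$ in $\widetilde{X_0}$ is a copy of the universal cover of $\Sigma'$, and the restriction of $\tilde f_0$ to any such component is a $\rho$-equivariant holomorphic map. Transporting through the canonical $\rho$-equivariant biholomorphism with $\widetilde{\Sigma'_X}\subset \widetilde{X}$, which is the \emph{full} preimage of $\Sigma'$ in $\widetilde{X}$ because $\pi_1(\Sigma')\to \pi_1(X)$ is an isomorphism, yields a $\rho$-equivariant holomorphic map
\[g: \widetilde{\Sigma'_X}\longrightarrow Y.\]

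The hard part will be to extend $g$ holomorphically across the complement $\widetilde{X}\setminus \widetilde{\Sigma'_X}$, which is a disjoint union of horoballs in $\widetilde{X}\cong \D$ attached to the lifts of the cusp at $y$. The plan here is to combine two ingredients: the Harish-Chandra realization of $Y$ as a bounded domain in $\C^N$, which makes $g$ a bounded holomorphic map, and property $(ii)$ just established, ensuring that $\rho(c_y)=\tilde\rho(b)$ has all eigenvalues of modulus $1$. The equivariance of $g$ under the parabolic subgroup $\langle c_y\rangle$ acting on each horoball, combined with the boundedness of the orbits of $\rho(c_y)$ on $\overline{Y}$, should let a standard removable-singularity argument, applied componentwise in Harish-Chandra coordinates, extend $g$ holomorphically. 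The most delicate point is to ensure that the extension takes values in $Y$ itself rather than only in $\overline{Y}$; this I would handle using the dense open subset $\mathrm{Res}(\Omega')\subset \mathrm{Res}(\Omega)$ from the previous proposition, on which $\rho(c_y)$ is automatically elliptic, combined with a Montel compactness argument for families of equivariant holomorphic maps into the bounded domain $Y$.
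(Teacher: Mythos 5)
Your treatment of properties $(ii)$ and $(iii)$ is correct and matches the paper's: a closed curve in $\Sigma'\subset\Sigma_{0,s_0}$ realizing at most $k$ self-intersections in $\Sigma_{0,s}$ also realizes at most $k$ self-intersections in $\Sigma_{0,s_0}$, so the bounds for $\tilde\rho\in\Omega$ apply verbatim to $\rho=\tilde\rho_{\vert\pi_1(\Sigma')}$.

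The argument for property $(i)$, however, breaks down exactly at the step you flag as ``the hard part'', and the proposed fix cannot work. After transporting $\tilde f_0$, your map $g$ is defined on $\widetilde X$ minus a $\pi_1(X)$-invariant union of disjoint open simply connected regions $B_i$ (the components of the preimage of $D\setminus\{y\}$). Filling these in is not a removable-singularity problem: each $B_i$ is an open disk, and a bounded holomorphic map defined on a neighborhood of $\partial B_i$ does not extend to $B_i$ in general (already $z\mapsto 1/(z-z_0)$ with $z_0\in B_i$ is holomorphic and bounded near $\partial B_i$). Untwisting the $\langle c_y\rangle$-equivariance does not help either: on the quotient you are asking a holomorphic map defined on an annulus $\{1\leq \vert q\vert <1+\epsilon\}$ to extend over the disk $\{\vert q\vert<1\}$ that it bounds, and no combination of boundedness, equivariance, or ellipticity of $\rho(c_y)$ forces this --- removable-singularity theorems apply to isolated points or polar sets, never to open disks. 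The restriction of $\tilde f_0$ to $\Sigma'$ simply does not determine, nor guarantee the existence of, a holomorphic filling over $D$.

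The paper sidesteps the extension problem altogether. Given the complex structure $X$ on $\Sigma_{0,s}$, uniformized by a Fuchsian representation $j$ of $\Gamma_{0,s}$, it picks a sequence of Fuchsian representations $j_n$ of $\Gamma_{0,s_0}$ (complex structures on the $s_0$-punctured sphere degenerating by pinching $b$) with ${j_n}_{\vert\pi_1(\Sigma')}\to j$. Each $(j_n,\tilde\rho)$-equivariant holomorphic map $f_n$ is defined on \emph{all} of $\H^2$; since the target is a bounded domain the family is normal, and a subsequential limit is a $(j,\rho)$-equivariant holomorphic map on all of $\H^2=\widetilde X$, so no horoball ever needs to be filled. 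Your Montel idea is the right tool, but it must be applied to globally defined maps along a degenerating family of source structures, not to a single restricted map.
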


\begin{proof}
Properties $(ii)$ and $(iii)$ immediately pass to the restriction of a representation to a subsurface. Property $(i)$ is slightly more subtle.

A representation $j: \Gamma_{0,s} \to \Isom^+(\H^2)$ is called \emph{Fuchsian} if it is discrete and faithful and $j(\Gamma_{0,s})\backslash \H^2$ is biholomorphic to a punctured Riemann sphere.

For any Fuchsian representation $j$ of $\pi_1(\Sigma') \cong \Gamma_{0,s}$, one can find a sequence of Fuchsian representations $(j_n)_{n\in \N}$ of $\Gamma_{0,s_0}$ such that $({j_n}_{\vert\Sigma'})_{n\in\N}$ converges to $j$. 

Now let $\rho$ be a representation in $\Omega$. By Theorem \ref{t:PropertiesRepresentationsPrecise}, for every $n\in \N$, there exists a $(j_n, \rho)$-equivariant holomorphic map $f_n$ from $\H^2$ to the symmetric space $X$ of $G$. All this maps are $1$-Lipschitz with respect to the Kobayashi metric of $X$. After extracting a subsequence, $(f_n)_{n\in\N}$ converges to a $(j,\rho_{\vert \Sigma'})$-equivariant holomorphic map from $\H^2$ to $X$. Since $j$ can be any Fuchsian representation, this shows that $\rho_{\vert \Sigma'}$ satisfies Property $(i)$.

\end{proof}


\newcommand{\etalchar}[1]{$^{#1}$}
\ifx\undefined\bysame
\newcommand{\bysame}{\leavevmode\hbox to3em{\hrulefill}\,}
\fi

\end{document}